\newtheorem{theorem}{Theorem}
\newtheorem{proposition}{Proposition}
\newcommand{\R}{{\mathbb R}}
\newcommand{\Z}{{\mathbb Z}}
\newcommand{\set}[2]{ \left\{ #1 \ \left| \ #2 \right. \right\} }
\newcommand{\rank}{\mbox{rank}}
\newcommand{\locrank}{\mbox{loc rank}}
\newcommand{\lless}{< \! \! <}
\newcommand{\ggreater}{> \! \! >}
\title{Uniform estimates for cubic oscillatory integrals}
\author{Philip T. Gressman\footnote{Partially supported by NSF grant DMS-0653755.}}
\begin{document}
\maketitle
\begin{abstract}
This paper establishes the optimal decay rate for scalar oscillatory integrals in $n$ variables which satisfy a nondegeneracy condition on the third derivatives.  The estimates proved are stable under small linear perturbations, as encountered when computing the Fourier transform of surface-carried measures.  The main idea of the proof is to construct a nonisotropic family of balls which locally capture the scales and directions in which cancellation occurs.
\end{abstract}

The purpose of this paper is to establish decay estimates for the scalar oscillatory integral
\begin{equation}
I(\lambda,\xi) := \int e^{i \lambda ( \Phi(x) + \xi \cdot x)} \psi(x) dx \label{oscillate}
\end{equation}
(where $x \in \R^n$, $\Phi$ and $\psi$ are real-valued and $\psi$ is compactly supported in some convex domain $\Omega$) which are uniform in $\xi$, in the case when the Hessian of $\Phi$ is degenerate but has some type of first-order nondegeneracy (corresponding to a condition on the third derivatives of $\Phi$). 
The integral \eqref{oscillate} arises naturally (after rescaling $\xi$) when taking the Fourier transform of the surface measure on the graph $(x,\Phi(x)) \subset \R^{n+1}$, which is itself intimately connected to many classical and modern problems in analysis; see Stein \cite{steinha} or Bruna, Nagel, and Wainger \cite{bnw1988} for discussion and a thorough collection of references to earlier work.  More recently, the issue of stability of oscillatory integrals has been the focus of the work of Phong, Stein, and Sturm \cite{pss1999}, \cite{pss2001} and Phong and Sturm \cite{ps2000}.  In addition, stability considerations are often implicit in the vast bodies of work on Radon transforms, oscillatory and Fourier integral operators, and variations on these objects.

As in the case of most treatments of the integral \eqref{oscillate}, the method of stationary phase will be the primary tool used.  %Loosely speaking, the main idea behind the method of stationary phase is that the non-neglible contributions to \eqref{oscillate} when $\lambda$ is large come from those points $x$ which are near to the critical points of $\Phi(x) + \xi \cdot x$.  For this reason, the method of stationary phase is most readily applied locally to phases with isolated critical points; 
In contrast with earlier work along these lines (for example, Var\v{c}enko \cite{varcenko1976}, Pramanik and Yang \cite{py2004} or Bruna, Nagel, and Wainger \cite{bnw1988}), the goal here is to avoid any assumptions (either on the nature of the Newton polyhedron in \cite{varcenko1976} or \cite{py2004} or on the convexity of the graph of $\Phi$ as in \cite{bnw1988}) which force $\Phi(x) + \xi \cdot x$ to have ``uniformly isolated'' critical points, since the critical points of $\Phi(x) + \xi \cdot x$ can decompose and coalesce as $\xi$ varies.  To mitigate this substantial new difficulty, the phase $\Phi$ will be assumed to have third derivatives which are nondegenerate in an appropriate sense.

When establishing uniform estimates, there is also an added difficulty that the oscillation index (i.e., the rate of decay of \eqref{oscillate} as a power of $\lambda$) is not in general an upper semicontinuous function of $\xi$ as might be hoped.  The classical example of the failure of semicontinuity is due to Var\v{c}enko \cite{varcenko1976}.  There is, however, a more pertinent example to the problem at hand:  consider the phase $\Phi(x_1,x_2,x_3,x_4) := -x_1^3 + x_1 (x_2^2 + x_3^2 + x_4^2)$.  This phase is homogeneous of degree $3$ and nondegenerate in the sense that the only critical point is at the origin.  Such integrals have been thoroughly studied; the work of Karpushkin \cite{karpushkin1995}, for example, applies to this phase and dictates that $|I(\lambda,0)| \leq C |\lambda|^{-4/3}$.  But exploiting the spherical symmetry of $\Phi$ in the second through fourth coordinates allows one to rewrite \eqref{oscillate} as a weighted oscillatory integral in the plane when $\xi = (-\epsilon,0,0,0)$ (the new phase being $- \epsilon x_1 - x_1^3 + x_1 r^2$).  For any $\epsilon > 0$, the critical point of the two-dimensional phase is nondegenerate (meaning that the Hessian matrix of $\Phi$ is invertible there) and is located away from the line $r=0$, thus one can only expect $|I(\lambda,\xi)| \leq C_\xi |\lambda|^{-1}$.

As in the work of Greenleaf, Pramanik, and Tang \cite{gpt2007} on oscillatory integral operators, there are two different approaches to estimating \eqref{oscillate}.  The first involves formulating a fairly explicit nondegeneracy condition for phases $\Phi$.  Loosely speaking, the condition is that, in the neighborhood of a critical point $x_0$, the magnitude of the gradient of $\Phi$ grows at least quadratically in the distance to that critical point {\it and} that the critical points of all linear perturbations of $\Phi$ have this property as well.  This precludes the pathologies of the phase $-x_1^3 + x_1 (x_2^2 + x_3^2 + x_4^2)$, since in this latter case the critical points of the perturbed phase need not be isolated.
The precise statement of this condition goes as follows:
suppose $\Omega \subset \R^n$ is an open, convex set, and suppose that $\Phi$ is real-valued function on $\Omega$ with bounded derivatives of all orders up through order $n+1$.
In particular, it will be assumed that there is some finite $K$ such that
for any unit vectors (with respect to the standard Euclidean metric) $w_1,w_2,w_3$, 
\[\left| (w_1 \cdot \nabla) (w_2 \cdot \nabla ) (w_3 \cdot \nabla) \Phi(x)\right| \leq K \]
for all $x \in \Omega$ (this will be referred to as the boundedness condition).  Moreover, the following nondegeneracy condition condition will be assumed:  for each $x \in \Omega$ and each finite $\mu$, let $V_{\mu,x}$ be the vector space of eigenvectors of the Hessian matrix of $\Phi$ at $x$, denoted $H_x$, with eigenvalues $\nu$ satisfying $|\nu| \leq \mu$.  The phase $\Phi$ will be said to satisfy the nondegeneracy condition when there exist constants $K'$, $M$, and $R$, so that,  for any $\mu \leq M$ and any unit vector $v \in V_{\mu,x}$, there is a unit vector $w \in V_{R \mu,x}$ such that
\begin{equation} (v \cdot \nabla) (v \cdot \nabla) (w \cdot \nabla) \Phi(y) \geq K' \label{nondegen}
\end{equation}
for all $y \in \Omega$.  Under these conditions, the following theorem holds:
\begin{theorem}
Suppose $\Phi$ satisfies the boundedness and nondegeneracy conditions.  Then for any $\psi$ compactly supported in $\Omega$, there is a constant $C$ such that
\[ \left| \int e^{i \lambda (\Phi(x) + \xi \cdot x)} \psi(x) dx \right| \leq C \lambda^{-\frac{n-k}{2} - \frac{k}{3}} \]
for all $\xi$ sufficiently small and all real $\lambda$, where $k$ is the infimum over all $x$ in the support of $\psi$ of the dimension of $V_{M,x}$. \label{oscthm}  Moreover, the exponent $- \frac{n-k}{2} - \frac{k}{3}$ is optimal in the sense that there exist phases satisfying \eqref{nondegen} and appropriate amplitude functions $\psi$ for which \eqref{oscillate} has magnitude at least equal to some constant times $\lambda^{-\frac{n-k}{2} - \frac{k}{3}}$.
\end{theorem}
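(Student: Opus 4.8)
The plan is to prove the upper bound by covering $\supp\psi$ with a family of nonisotropic boxes adapted jointly to $\lambda$, to the Hessian, and to the gradient of the perturbed phase $\phi:=\Phi+\xi\cdot x$, and then to sum the contributions of these boxes; the optimality follows from an explicit model phase. We may assume $\lambda\geq1$ is large, $\xi$ is small, and (after a partition of unity) that $\psi$ is supported near a single point. The feature that makes the estimate uniform in $\xi$ is that $\phi$ and $\Phi$ have the same Hessian $H_x$, so the boundedness and nondegeneracy hypotheses hold verbatim for $\phi$; only $\nabla\phi=\nabla\Phi+\xi$ feels the perturbation, and it does so only through the location of the (possibly non-isolated, possibly coalescing) critical set.

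For the box construction I would, at each $x_0\in\supp\psi$, diagonalize $H_{x_0}$ and split $\R^n$ into the orthogonal sum of the elliptic eigenspaces (eigenvalues $|\nu|>M$, at most $n-k$ dimensions) and $V_{M,x_0}$ (the degenerate directions). To the elliptic eigendirection with eigenvalue $\nu$ assign the length $\min\{1,(\lambda|\nu|)^{-1/2}\}$, to each degenerate eigendirection the length $\lambda^{-1/3}$, and then shrink and translate the resulting box using the size and direction of $\nabla\phi(x_0)$ so that on it the phase either has a genuinely non-stationary direction or lies in its ``critical core.'' The properties to verify are: an engulfing/bounded-overlap property for boxes with comparable parameters; a doubling estimate for the box volumes; and that on each box the Taylor expansion of $\phi$ is faithfully captured to second order in the elliptic directions and to the order of the cubic term produced by \eqref{nondegen} in the degenerate ones --- here the uniform boundedness of derivatives through order $n+1$ controls all remainders. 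A Besicovitch-type selection then gives a boundedly overlapping cover of $\supp\psi$, refined dyadically in the scale parameter.

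On a box where $|\nabla\phi|$ is large relative to the box dimensions, repeated integration by parts along the gradient yields a gain $\lambda^{-N}$ for every $N$, so such boxes are harmless. On a critical-core box I estimate by Fubini: each elliptic direction contributes a factor $(\lambda|\nu|)^{-1/2}$ by the lower bound $|\partial_w^2\phi|\gtrsim|\nu|$ and van der Corput's lemma, while each degenerate direction contributes a factor $\lambda^{-1/3}$ (or the better $(\lambda|\nu|)^{-1/2}$ when the eigenvalue is not too small) via the vector $w$ furnished by \eqref{nondegen}, along which a third derivative of $\phi$ is bounded below, together with a weighted two-variable van der Corput / Airy estimate. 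The product of these factors is exactly $\lambda^{-(n-k)/2-k/3}$ times the relative mass of $\psi$ on the box, so one dyadic generation of almost-disjoint critical-core boxes sums to $O(\lambda^{-(n-k)/2-k/3})$, and the sum over the logarithmically many generations is controlled by the engulfing property as a geometric series. I expect the main obstacle to be exactly this last step together with the bookkeeping in the degenerate block: the vectors $v$ and $w$ in \eqref{nondegen} need not be aligned with the eigenbasis and $w$ may lie partly outside $V_{M,x}$, so the one- and two-dimensional estimates must be iterated in a carefully chosen order --- for instance via a greedy selection of $(v,w)$-pairs followed by a change of variables straightening each $w$ --- so that the errors do not compound, and one must show that the transitional regions where the spectrum of $H_x$ is changing (the coalescing of critical points, responsible for the failure of upper semicontinuity of the oscillation index) are absorbed by the box geometry with no net loss.

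For optimality, take $\Phi(x)=\sum_{i=1}^{n-k}\tfrac12 x_i^2+\sum_{j=n-k+1}^{n}\tfrac16 x_j^3$ on a small ball about the origin. Near $0$ the elliptic eigenvalues equal $1$ and $V_{M,x}=\mathrm{span}(e_{n-k+1},\dots,e_n)$ once $M<1$; given a unit $v$ in this span, $w=(0,\dots,0,v_{n-k+1}^2,\dots,v_n^2)/\|(v_{n-k+1}^2,\dots,v_n^2)\|$ lies in $V_{M,x}$ and gives $(v\cdot\nabla)^2(w\cdot\nabla)\Phi=\|(v_{n-k+1}^2,\dots,v_n^2)\|\geq k^{-1/2}$, so \eqref{nondegen} holds (and the boundedness condition is immediate). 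Taking $\xi=0$ and $\psi$ a product of one-dimensional bumps equal to $1$ near $0$, the integral factors into $n-k$ integrals $\int e^{i\lambda x_i^2/2}\psi_i\,dx_i$ of exact size $\sim\lambda^{-1/2}$ and $k$ integrals $\int e^{i\lambda x_j^3/6}\psi_j\,dx_j$ of exact size $\sim\lambda^{-1/3}$ (rescale by $\lambda^{-1/2}$, resp. $\lambda^{-1/3}$, and use that $\int e^{iu^2/2}\,du$ and $\int e^{iu^3/6}\,du$ are nonzero); hence $|I(\lambda,0)|\gtrsim\lambda^{-(n-k)/2-k/3}$, so the exponent is optimal.
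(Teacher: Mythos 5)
Your optimality argument is essentially the paper's: the same model phase (quadratic in $n-k$ variables, cubic in $k$), the same verification of \eqref{nondegen}, and the same factorization into one-dimensional Fresnel/Airy integrals. That half is fine.

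For the upper bound, the outline (nonisotropic boxes, integration by parts where the gradient is not small, dyadic summation near the critical set) is the right spirit, but the step you flag as the ``main obstacle'' --- the Besicovitch cover of the critical-core boxes and the geometric series over generations --- is precisely where the substance lies, and as written there is a genuine gap. Two concrete problems. First, your box at $x_0$ is built from a frozen elliptic/degenerate split of $H_{x_0}$ with lengths $\min\{1,(\lambda|\nu|)^{-1/2}\}$ and $\lambda^{-1/3}$, but the relevant scale at $x_0$ must also see $|\nabla\phi(x_0)|$ and all the intermediate eigenvalues, and these co-vary as $x_0$ moves. The paper handles this by defining a single scale-dependent norm $N_x[\cdot,r]$ that blends $(\lambda|\nu|)^{-1/2}$ and $\lambda^{-1/3}$ over the whole spectrum, and then taking the scale $r(x)=N^*_x[\nabla\Phi(x)]$ (the nonisotropic size of the gradient) as the radius of the ball on which one integrates by parts. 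That reduction, inequality \eqref{sublevel}, converts the oscillatory estimate into a sublevel-set estimate for $N_y^*[\nabla\Phi(y)]$, and nothing in your sketch recovers it. Second, to bound that sublevel measure one needs to show that $N_x^*[\nabla\Phi(x)]$ grows at least linearly in the nonisotropic distance from $x$ to a fixed, \emph{bounded} set of points $z_i$; the paper proves this via a propagation result for spectral gaps and local rank (Proposition~\ref{gap}) and a rank-induction decomposition (Proposition~\ref{decomp}), together with the two propositions of Section~\ref{gradientsec} that split the analysis into the large-eigenvalue directions (handled without \eqref{nondegen}) and the small-eigenvalue directions (where \eqref{nondegen} enters). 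Your ``greedy selection of $(v,w)$-pairs followed by a change of variables straightening each $w$'' does not supply this: the $w$'s at different points are incompatible, the split into elliptic/degenerate directions rotates across the domain, and without the gap/rank induction there is no a priori bound on the number of critical-core boxes nor a mechanism to make the generations sum geometrically. In short, the reduction to a sublevel estimate via $N_y^*[\nabla\Phi(y)]$ and the spectral-gap/rank-induction covering are the missing ideas, and they are not cosmetic: they are what defeats the loss of upper semicontinuity coming from coalescing critical points.
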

The second type of result is primarily algebraic (the ``low-lying fruit'' of Greenleaf, Pramanik, and Tang \cite{gpt2007}).  The main idea behind this approach is that, in sufficiently high dimensions, the Hessian matrix of a generic cubic polynomial at a point $x \neq 0$ is ``nearly'' nondegenerate, meaning that the rank is asymptotic to the dimension $n$.  This allows for a somewhat different approach to estimating \eqref{oscillate}.   Let ${\mathfrak S}_n^3$ be the real vector space of cubic polynomials in $n$ variables (given the usual metric topology).  When the Hessian of $\Phi$ is zero at the origin, then a uniform estimate also holds generically in the following sense:
\begin{theorem}
For any dimension $n \geq 18$, there is a dense open set $U_n \subset {\mathfrak S}_n^3$ such that, for any $p \in U_n$, if $\Phi(x) - p(x)$ vanishes to fourth order (or higher) at the origin, then there is a constant $C$ such that \label{oscthm2}
\[ \left| \int e^{i \lambda (\Phi(x) + \xi \cdot x)} \psi(x) dx \right| \leq C \lambda^{-\frac{n}{3}} \]
provided $\xi$ is sufficiently small and $\psi$ is supported in a sufficiently small neighborhood of the origin.
\end{theorem}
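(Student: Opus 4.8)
The plan is to extract a purely algebraic property of generic cubics and then run a scale‑by‑scale stationary‑phase argument; a direct appeal to Theorem~\ref{oscthm} will not work, because a generic cubic fails its nondegeneracy condition \eqref{nondegen} (at a direction $v$ with $\det S_v=0$ and $p(v)=0$, the only $w$ available in a small eigenspace of $D^2\Phi$ near $v$ is $v$ itself, and $D^3\Phi(v,v,v)$ is a constant multiple of $p(v)=0$). First a reduction: the quadratic part of $p$ vanishes, since $\Phi$ — which agrees with $p$ to third order — has vanishing Hessian at the origin; the constant part is irrelevant; and if the linear part of $p$ is nonzero, then it is generally not small, so $\nabla(\Phi(x)+\xi\cdot x)$ stays bounded away from $0$ for $\xi$ small and $x$ near $0$ and repeated integration by parts already yields $O(\lambda^{-N})$. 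So assume $p$ is homogeneous of degree three with symmetric trilinear form $T$, write $\Phi=p+R$ with $R$ vanishing to order four, and set $S_v:=T(v,\cdot,\cdot)$ (a symmetric matrix, linear in $v$); then $D^2p(x)$ and $D^3p$ are nonzero constant multiples of $S_x$ and $T$, so $D^2\Phi(x)=c_1S_x+O(|x|^2)$ and $D^3\Phi(x)=c_2T+O(|x|)$ with $c_1,c_2\neq0$.

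The set $U_n$ I would take is: the cubic part of $p$ satisfies $T(v,v,\cdot)\neq0$ for every $v\neq0$, and $c_0(p):=\max_{v\neq0}\operatorname{corank}S_v<n/3$. This should be dense and open once $n\geq18$. Density is a dimension count in the spirit of \cite{gpt2007}: generically the only $v$ with $T(v,v,\cdot)=0$ is $v=0$; and since $v\mapsto S_v$ is linear and, for generic $T$, its projectivization $\mathbb{P}^{n-1}\to\mathbb{P}(\mathrm{Sym}_n)$ is a linear embedding, its image misses the locus of matrices of corank $\geq c$ — a determinantal subvariety of codimension $\binom{c+1}{2}$ — as soon as that codimension exceeds $n-1$, so for generic $T$ the corank of $S_v$ is bounded by a quantity which, balanced against the exponent $n/3$, is $<n/3$ precisely in the stated range. (This is where phases like the introduction's $-x_1^3+x_1(x_2^2+\dots+x_n^2)$, whose Hessian has corank $n-2$, get excluded.) Openness: by compactness of the unit sphere, "$T(v,v,\cdot)\neq0$ for all $v\neq0$" and "$\operatorname{corank}S_v\leq c_0$ for all $v\neq0$" amount to a uniform lower bound on $\min_{|v|=1}|T(v,v,\cdot)|$ and to a uniform spectral gap (the $(c_0{+}1)$‑st smallest $|$eigenvalue$|$ of $S_v$ bounded below for $|v|=1$), both stable under small perturbations of $p$. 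Fix $p\in U_n$, put $c_0=c_0(p)<n/3$; by compactness there is $c>0$ with $|T(v,v,\cdot)|\geq c|v|^2$ and with $S_v$ having at least $n-c_0$ eigenvalues of modulus $\geq c|v|$, for all $v$.

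For the estimate, take $\lambda\geq1$ (the case $\lambda<0$ is conjugate) and $\psi$ supported in $B(0,\delta_0)$ with $\delta_0$ small. Split $\psi=\psi_{\mathrm{core}}+\sum_\rho\psi_\rho$, with $\psi_{\mathrm{core}}$ supported in $|x|\lesssim\lambda^{-1/3}$ and $\psi_\rho$ in $|x|\sim\rho$ for dyadic $\rho\in[\lambda^{-1/3},\delta_0]$. The core term is bounded by the volume of its support, $\lesssim\lambda^{-n/3}$. On an annulus, rescale $x=\rho y$: the phase becomes $\Lambda\,\Psi_\rho(y)$ with $\Lambda:=\lambda\rho^3\geq1$ and $\Psi_\rho(y)=p(y)+\eta\cdot y+\rho\,r_\rho(y)$, $\eta:=\rho^{-2}\xi$, $r_\rho$ with derivatives bounded uniformly in $\rho$. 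On $|y|\sim1$ one has $|\nabla p(y)|\sim|y|^2|T(\hat y,\hat y,\cdot)|\sim1$, so $\nabla\Psi_\rho$ can vanish on $|y|\sim1$ only when $|\eta|\sim1$, i.e.\ for the $O(1)$ annuli with $\rho\sim|\xi|^{1/2}$; for the rest, $|\nabla\Psi_\rho|\gtrsim\max(1,|\eta|)$ there, all higher derivatives of $\Psi_\rho$ and of the rescaled amplitude are bounded, and integration by parts gives $\lesssim\rho^n\Lambda^{-N}$, which summed over dyadic $\rho\in[\lambda^{-1/3},\delta_0]$ (take $N>n/3$) is $\lesssim\lambda^{-n/3}$. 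For a resonant annulus, at each critical point $y_c$ of $\Psi_\rho$ — necessarily $|y_c|\sim1$ — the Hessian $D^2\Psi_\rho(y_c)=c_1S_{y_c}+O(\rho)$ has at least $n-c_0$ eigenvalues of modulus $\gtrsim1$; stationary phase in those directions produces $\Lambda^{-(n-c_0)/2}$ and leaves a bounded integral over the $\leq c_0$ remaining directions, so the annulus contributes at most a constant times
\[ \rho^n\Lambda^{-(n-c_0)/2}=\lambda^{-(n-c_0)/2}\rho^{(3c_0-n)/2}\leq\lambda^{-(n-c_0)/2}(\lambda^{-1/3})^{(3c_0-n)/2}=\lambda^{-n/3}, \]
using $\rho\geq\lambda^{-1/3}$ and $(3c_0-n)/2<0$; with $O(1)$ resonant annuli this is again $\lesssim\lambda^{-n/3}$. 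It is precisely the strict inequality $c_0<n/3$, hence $n\geq18$, that makes this summable without a logarithmic loss. Adding the three contributions proves the theorem.

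The hard part will be the resonant annuli, namely carrying out the stationary‑phase step there with constants uniform in $\xi$. For generic $\xi$ the critical set of $\Psi_\rho$ on $|y|\sim1$ is a finite set of points nondegenerate in $n-c_0$ directions, but as $\xi$ varies the critical points can collide, and for an exceptional, positive‑codimension set of $\xi$ the critical set becomes positive‑dimensional; it still lies inside $\{\det S_y=0\}$, so its dimension is $\leq\operatorname{corank}S_y\leq c_0$ and the Hessian transverse to it is uniformly nondegenerate in $n-c_0$ directions, but organizing this reduction rigorously and uniformly in both $\xi$ and the scale $\rho$ — accommodating degenerate critical manifolds, collisions, and the scale dependence — is exactly what the nonisotropic stationary‑phase apparatus developed for Theorem~\ref{oscthm} is designed to handle, and invoking it is the one non‑elementary ingredient.
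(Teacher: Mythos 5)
Your proposal shares the paper's decisive numerology — corank of the Hessian strictly less than $n/3$, hence $n\geq 18$ — and the dyadic decomposition in $|x|$, but the two arguments are organized quite differently and your version has two genuine gaps.

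First, the analytic step. The paper never performs classical stationary phase on any annulus. It feeds the generic rank property into the sublevel bound \eqref{sublevel}, and then estimates $\int_{B(y,d)}\frac{dz}{1+(\lambda N_z^*[\nabla\Phi(z)])^N}$ by applying \eqref{hesssize1} with Fubini to integrate out the $k$ ``large-eigenvalue'' directions (producing $\lambda^{-k/2}D^{-1/2}$) and using the isotropic volume in the remaining $n-k$ directions (producing $d^{(n-k)/3}$). That sublevel route is deliberately indifferent to whether the critical set of $\Phi(x)+\xi\cdot x$ is isolated, degenerate, or positive-dimensional, which is exactly what makes the $\xi$-uniformity automatic. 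Your rescaled-annulus version handles the core ball and the non-resonant annuli correctly, but on the resonant annuli you concede that carrying out stationary phase uniformly in $\xi$ — accommodating collisions and possible critical manifolds — is ``the one non-elementary ingredient'' and defer it to the paper's apparatus; as written that step is a pointer, not a proof, and it is precisely the step where the whole difficulty lives. Note also that the paper does not need your extra hypothesis $T(v,v,\cdot)\neq 0$ for $v\neq 0$: the rank condition alone, through \eqref{hesssize1}, already yields the required lower bound on $N_z^*[\nabla\Phi(z)]$, so the sublevel route tolerates $\nabla p$ vanishing away from the origin.

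Second, the density/openness of $U_n$. You assert that for generic $T$ the projectivized image of $v\mapsto S_v$ misses the corank-$\geq c$ determinantal variety (codimension $\binom{c+1}{2}$) once $\binom{c+1}{2}>n-1$. That conclusion is clear for a \emph{generic} $(n{-}1)$-plane in $\mathbb{P}(\mathrm{Sym}_n)$, but the family of planes $\{S_v\}_v$ arising from symmetric $3$-tensors is highly constrained, and one must actually verify transversality to the determinantal stratification for this constrained family. The paper does this concretely: at $x=(1,0,\ldots,0)$ the derivatives $\partial_{p_{ij}}T_{x,p}$ span all of $\mathrm{Sym}_n$, so the projected derivatives $P_L(\partial_{p_{ij}}T_{x,p})P_R$ span a space of dimension $\frac{1}{2}(n-r)(n-r+1)$, and the implicit function theorem applied to the incidence variety $\Lambda_r$ then shows the bad cubics have codimension at least $\frac{1}{2}(n-r)(n-r+1)-n$. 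Your dimension count produces the right threshold, but only the transversality computation makes it a proof.
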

It should be noted that similar results hold as long as the kernel of the Hessian of $\Phi$ has non-negligible dimension, which in this case means that it is larger than some fixed constant times $n^\frac{1}{2}$.  An interesting consequence of the proof of \ref{oscthm2} is that the set of ``bad'' cubics in $n$ variables actually has codimension greater than $1$ in ${\mathfrak S}_n^3$.  This is in sharp contrast with the standard results for quadratics (``bad'' quadratics are completely characterized in this context by having zero determinant).  This observation partly explains why it appears to be so difficult to explicitly characterize that set (as noted in \cite{gpt2007}).

{\bf Examples.}  An example of a phase $\Phi$ satisfying the conditions of theorem \eqref{oscthm} is given by $\Phi(x) := \sum_{i=1}^k x_i^3 + \sum_{i=k+1}^n x_i^2$.  In and of itself, this example is of limited interest (although it is readily seen to prove the optimality of theorem \ref{oscthm}), since the integral \eqref{oscillate} factors into a product of one-dimensional integrals in this case.  The novelty in this case is that the nondegeneracy condition continues to hold for a class of smooth perturbations which break the factorization (for example $\Phi(x) +  (x_1 x_2 x_3)^3$).  Other more complicated examples, like $\Phi(x,y) := x^3 - 3 xy^2$ exist as well (and, again, continue to satisfy the nondegeneracy condition under some class of smooth perturbations).  At the other end of the spectrum, theorem \eqref{oscthm2} is quite broadly applicable, but even given more detailed information about the set $U$, it quickly becomes very difficult to verify whether a given phase $\Phi$ is indeed generic or not.  The computation is in theory an explicit one, using the machinery of resultants (as appeared in the work of Greenleaf, Pramanik, and Tang \cite{gpt2007}, see the book of Gel$'$fand, Zelevinski{\u\i}, and Kapranov  \cite{gkz1994} for a complete exposition) as well as the machinery of determinantal resultants as developed by Bus\'e \cite{buse2004}.  In practice, however, it quickly becomes impossible to write down explicit examples for which \eqref{oscthm2} applies because the sum of two generic phases of lower degree $\Phi_1(x) + \Phi_2(y)$ will not necessarily be generic as a function of $x$ and $y$.

As can be expected for problems of this type, the main element of the proof of theorem \ref{oscthm} is an integration-by-parts argument.  In this case, the natural sets on which to perform the integration-by-parts are a family of nonisotropic balls which are intimately connected to the geometry of the Hessian matrix of $\Phi$ (similar to the work of Bruna, Nagel, and Wainger \cite{bnw1988}).   In the proof of various one-dimensional generalizations of the classical van der Corput lemma, two facts about degenerate phases become clear:  first, oscillatory integrals with degenerate phases have less ``total'' cancellation than integrals with nondegenerate phases.  Second, the cancellation occurring for degenerate phases happens over longer scales (i.e., it takes more ``room'' for cancellation to occur).  In higher dimensions, cancellation can take place on a variety of different length scales and in different directions.  The nonisotropic balls given in section \ref{ballsec} reflect the natural local length scales at which cancellations occur in \eqref{oscillate}.  Following that, the integration-by-parts is performed, and the general situation is reduced to integration on nonisotropic balls by constructing an appropriate partition of unity adapted to those balls.  Finally, in section \ref{gradientsec}, the results of the integration-by-parts argument are applied to the specific case of theorem \ref{oscthm}.  The key idea behind this application is an inductive decomposition of the domain into pieces on which there are large gaps in the spectra of the Hessian matrices $H_x$.  The proof of theorem \eqref{oscthm2} comes in section \ref{genericsec}.

%{\bf Examples.}  $\Phi(x,y) = \sum_j \mu_j x_j^2 + \sum_k \delta_k y_k^3$, also talk about $\Phi(x_1,x_2) = x_1^3 - 3 x_1 x_2^2$ and adding quantities like $x y^2 z^2$ to $x^3 + y^3 + z^3$.  Interesting and useful to characterize these in some other way.  
%Bus\'e \cite{buse2004}

\section{Nonisotropic ball geometry}
\label{ballsec}
To begin this section, a brief explanation of convention is in order.  Given two quantities $A$ and $B$, the expression $A \lesssim B$ will mean that there exists a constant $C$ depending only on the dimension $n$ such that $A \leq C B$.  Likewise $A \approx B$ will mean $A \lesssim B$ and $B \lesssim A$.  The expression $A \lless B$ will stand for the phrase ``there exists a sufficiently small constant $c$ depending only on dimension such that $A \leq c B$.''  The distinction between these two conventions is that $A \lless B$ will only appear as the hypothesis of an implication, while $A \lesssim B$ will only appear as the conclusion of an implication.
Finally, here and throughout, $| \cdot |$ will represent the standard Euclidean length of a vector.

Let $H_x$ be the Hessian matrix of the phase function $\Phi$ at $x$, and let $E^\mu_x$ be the spectral projection onto the eigenspace of $H_x$ with eigenvalue $\mu$.  The eigenvectors and eigenvalues of $H_x$ will be used to construct a nonisotropic family of balls.  Before this can be accomplished, it is necessary to consider the continuity properties of the spectrum itself, as expressed by the following proposition:
\begin{proposition}
For any $x,y \in \Omega$ and any real numbers $\mu_1,\mu_2$,
\begin{equation} ||E^{\mu_1}_x E^{\mu_2}_y || \leq \frac{\min\{|\mu_1|,|\mu_2|\} + K |x-y|}{\max\{|\mu_1|,|\mu_2|\}}. \label{spectrumpert}
\end{equation}
Furthermore, if $|x-y| \leq K^{-\frac{1}{3}} d^\frac{1}{3}$ and $r_1, r_2 \leq d$ then
\begin{equation}
||E^{\mu_1}_x E^{\mu_2}_y || \leq 2 d^\frac{1}{6} r_1^\frac{1}{2} r_2^{-\frac{2}{3}} \frac{(\min\{|\mu_1|,|\mu_2|\} r_2)^\frac{1}{2} + (K r_2^2)^\frac{1}{3}}{(\max\{|\mu_1|,|\mu_2|\} r_1)^\frac{1}{2} + (K r_1^2)^\frac{1}{3}}. \label{spectrumpert2}
\end{equation}
\end{proposition}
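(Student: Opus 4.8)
The plan is to reduce both estimates to the single operator‑norm bound $\|H_x - H_y\| \le K|x-y|$, which holds for all $x,y \in \Omega$ as a consequence of the boundedness condition: since $\Omega$ is convex the segment from $y$ to $x$ lies in $\Omega$, so for unit vectors $a,b$ the fundamental theorem of calculus gives
\[ a^{T}(H_x - H_y)b = |x-y| \int_0^1 (a \cdot \nabla)(b \cdot \nabla)(c \cdot \nabla)\Phi(y + t(x-y))\,dt, \qquad c := \frac{x-y}{|x-y|}, \]
and the integrand is bounded by $K$ in absolute value. I would also record at the outset the trivial bound $\|E^{\mu_1}_x E^{\mu_2}_y\| \le 1$: because $H_x$ and $H_y$ are real symmetric, their spectral projections are orthogonal projections, and a product of orthogonal projections has norm at most $1$.

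To prove \eqref{spectrumpert}, note first that $E^{\mu_1}_x E^{\mu_2}_y$ and $E^{\mu_2}_y E^{\mu_1}_x$ are adjoints of one another and hence have the same norm, so $\|E^{\mu_1}_x E^{\mu_2}_y\|$ is unchanged under the swap $(x,\mu_1) \leftrightarrow (y,\mu_2)$; since the right‑hand side of \eqref{spectrumpert} is manifestly symmetric, one may assume $|\mu_2| \ge |\mu_1|$. Fix a unit vector $v$ in the range of $E^{\mu_2}_y$ and set $u := E^{\mu_1}_x v$. Using $H_y v = \mu_2 v$, the commutation relation $E^{\mu_1}_x H_x = H_x E^{\mu_1}_x$, and $H_x u = \mu_1 u$, one gets
\[ \mu_2 u = E^{\mu_1}_x H_y v = E^{\mu_1}_x H_x v - E^{\mu_1}_x(H_x - H_y)v = \mu_1 u - E^{\mu_1}_x(H_x - H_y)v. \]
Taking norms and using $|u| \le |v| = 1$ together with $\|H_x - H_y\| \le K|x-y|$ yields $|\mu_2|\,|u| \le |\mu_1| + K|x-y|$, so $|u| \le (|\mu_1| + K|x-y|)/|\mu_2|$; since $\|E^{\mu_1}_x E^{\mu_2}_y\|$ does not exceed the supremum of $|E^{\mu_1}_x v|$ over unit $v$ in the range of $E^{\mu_2}_y$, this is precisely \eqref{spectrumpert}.

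Finally, \eqref{spectrumpert2} I would obtain as a purely elementary consequence of \eqref{spectrumpert}, the trivial bound $\le 1$, and the hypotheses, which in convenient form read $r_1, r_2 \le d$ and $K|x-y| \le K^{2/3} d^{1/3}$ (so also $(K|x-y|)^{1/2} \le K^{1/3} d^{1/6}$). Writing $m := \min\{|\mu_1|,|\mu_2|\}$ and $M := \max\{|\mu_1|,|\mu_2|\}$, I would split into the case $m + K|x-y| \le M$, where one uses the bound $(m+K|x-y|)/M$ coming from \eqref{spectrumpert}, and the case $m + K|x-y| > M$, where one uses $\|E^{\mu_1}_x E^{\mu_2}_y\| \le 1$. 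In each case, clearing the denominator on the right‑hand side of \eqref{spectrumpert2} reduces the claim to comparing finitely many monomials in $m, M, K, |x-y|, r_1, r_2, d$, and every comparison is forced by $m \le M$, $r_1, r_2 \le d$, and $K|x-y| \le K^{2/3}d^{1/3}$ (for instance $m(Mr_1)^{1/2} \le M r_1^{1/2} d^{1/6} m^{1/2} r_2^{-1/6}$ reduces to $m^{1/2} \le M^{1/2}(d/r_2)^{1/6}$, and the perturbation monomials are absorbed using $(K|x-y|)^{1/2} \le K^{1/3} d^{1/6}$ together with $r_1 \le d$). I expect the only genuine obstacle to be the bookkeeping in this last step: organizing the monomial comparisons so that the implied numerical constant stays bounded, and recognizing that the trivial bound $\le 1$ is exactly what rescues the regime in which $M$ is small compared with $K|x-y|$ — a regime where \eqref{spectrumpert} is useless but the right‑hand side of \eqref{spectrumpert2} is automatically bounded below by a constant.
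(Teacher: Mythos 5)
Your proof of \eqref{spectrumpert} is essentially the paper's argument in a slightly different packaging: the paper also reduces to $\|H_x-H_y\|\le K|x-y|$ via the mean-value theorem (citing the Taylor formula \eqref{taylor}), uses self-adjointness to assume $|\mu_1|>|\mu_2|$, and then combines $|E^{\mu_1}_x v|\le|\mu_1|^{-1}|H_x v|$ with the triangle inequality for $H_x E^{\mu_2}_y v$ — the same spectral ingredients you use, applied in the opposite order of projections. Both are fine; no comment needed there.

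For \eqref{spectrumpert2}, you take a genuinely different route, and this is where a gap appears. The paper's proof hinges on a single observation that your plan misses: since $\|E^{\mu_1}_x E^{\mu_2}_y\|\le 1$ unconditionally, the right-hand side of \eqref{spectrumpert} can be replaced by its \emph{square root} (because $\min\{1,t\}\le\sqrt{t}$ for $t\ge 0$). After that the whole inequality falls out in three lines: $(|\mu_2|+K|x-y|)^{1/2}\le|\mu_2|^{1/2}+K^{1/3}d^{1/6}$, then $|\mu_2|^{1/2}+K^{1/3}d^{1/6}\le(d/r_2)^{1/6}(|\mu_2|^{1/2}+K^{1/3}r_2^{1/6})$ using $r_2\le d$, and $|\mu_1|^{1/2}\ge\frac12(|\mu_1|^{1/2}+K^{1/3}r_1^{1/6})$, which is the one ``non-trivial'' hypothesis $(|\mu_1|r_1)^{1/2}\ge(Kr_1^2)^{1/3}$ (otherwise the right-hand side of \eqref{spectrumpert2} is already $\ge 1$). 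Multiplying the three gives \eqref{spectrumpert2} with the constant $2$ exactly, and no case analysis on $m+K|x-y|$ versus $M$ is needed.

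Your proposal — split on $m+K|x-y|\lessgtr M$, cross-multiply, compare monomials — can be pushed through, but not quite as you describe. In the regime $m+K|x-y|\le M$, expanding $(m+K|x-y|)(M^{1/2}+K^{1/3}r_1^{1/6})$ and bounding term by term with only $m\le M$, $K|x-y|\le M$, $(K|x-y|)^{1/2}\le K^{1/3}d^{1/6}$ and $r_1,r_2\le d$ produces three copies of $MK^{1/3}d^{1/6}$ against the budget of $2MK^{1/3}d^{1/6}$ on the right — so the naive accounting yields constant $3$, not $2$. To recover $2$ you need either (a) the square-root step above, or (b) a further sub-split on $M\lessgtr(K^2 d)^{1/3}$, using the trivial bound when $M$ is small (there the right-hand side of \eqref{spectrumpert2} automatically exceeds $1$) and using $M^{1/2}\ge K^{1/3}d^{1/6}$ to sharpen the $mK^{1/3}r_1^{1/6}$ term when $M$ is large. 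Your casework on $m+K|x-y|\lessgtr M$ is not the right dichotomy for this: the delicate regime is $M$ small relative to $(K^2 d)^{1/3}$, which straddles both of your cases. Since all downstream uses are $\lesssim$-estimates, a constant of $3$ would be harmless in practice, but as written your proposal does not prove the stated constant, and the $\min\{1,t\}\le\sqrt{t}$ observation is the idea you would want to import before the bookkeeping becomes pleasant.
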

\begin{proof}
If $\mu_1 = \mu_2$, both inequalities are trivial (since the operator norm is at most $1$).  In addition, since $E_x^{\mu_1}$ and $E_y^{\mu_2}$ are self-adjoint, it suffices to assume that $|\mu_1| > |\mu_2|$.  But for any $v$, $|E^{\mu_1}_x v| \leq |\mu_1|^{-1} |H_x v|$ (since $H_x$ is self-adjoint).  In addition, $|H_x E^{\mu_2}_y v| \leq |(H_y - H_x) E^{\mu_2}_y v| + |H_y E^{\mu_2}_y v|$.  But $||H_y - H_x|| \leq K |y-x|$ by the mean-value theorem (parametrizing the line segment from $x$ to $y$ as in \eqref{taylor}).  Combining all these inequalities gives \eqref{spectrumpert}.

Next, suppose $|x-y| \leq K^{-\frac{1}{3}} d^\frac{1}{3}$.  If $(|\mu_1| r_1)^\frac{1}{2} \leq (K r_1^2)^\frac{1}{3}$, then the inequality \eqref{spectrumpert2} is again trivial, so it may be assumed that this does not occur. Because the norms of the projections $E$ are one, the right-hand side of \eqref{spectrumpert} may be replaced by its square root.  Since $|x-y| \leq K^{-\frac{1}{3}} d^\frac{1}{3}$, so it must be the case that $|\mu_1|^{- \frac{1}{2}} (|\mu_2| + K |x-y|)^\frac{1}{2} \leq (|\mu_1|)^{-\frac{1}{2}} (|\mu_2|^\frac{1}{2} + K^\frac{1}{3} d^\frac{1}{6} )$.   But $|\mu_2|^\frac{1}{2} + K^\frac{1}{3} d^\frac{1}{6} \leq  (d r_2^{-1})^{\frac{1}{6}}(|\mu_2|^\frac{1}{2} + K^\frac{1}{3} r_2^\frac{1}{6})$ and $|\mu_1|^\frac{1}{2} \geq \frac{1}{2} (|\mu_1|^\frac{1}{2} + K^\frac{1}{3} r_1^\frac{1}{6})$.  Multiplying these estimates gives \eqref{spectrumpert2}.
\end{proof}

The following two norms will be the starting point for the construction of an appropriate family of nonisotropic balls adapted to the geometry of $\Phi$.  For any vector $v \in \R^n$ and any nonnegative $r>0$, let
\[ N_x [v,r] := r^{-1} \left( \sum_{\mu} |E_x^\mu v|^2 \left( (|\mu| r)^\frac{1}{2}+ (K r^2)^\frac{1}{3} \right)^2 \right)^\frac{1}{2} \]
and
\[ N_x^* [v,r] := \left(\sum_{\mu} \left(\frac{|E_x^\mu v|}{(|\mu| r)^\frac{1}{2}+ (K r^2)^\frac{1}{3}} \right)^2 \right)^\frac{1}{2}. \]
After a brief exposition of the elementary properties of these objects, the construction will be the following:  the distance from the point $x$ to the point $y$ will be measured by taking the infimum over $r > 0$ of all such $r$ for which $N_x[x-y,r] < 1$.  The dual object then measures the magnitude of (dual) vectors (i.e., the gradient of $\Phi$) in the appropriate nonisotropic sense; again the ``length'' of such an object $v$ being the infimum over all $r > 0$ for which $N_x^*[v,r]$.  But first, the basic properties of $N_x$ and $N_x^*$ which will be frequently exploited are proved:
\begin{proposition}
The following properties are true of $N_x$ and $N_x^*$: \label{normprop}
\begin{enumerate}
\item For fixed $x$ and $v$, $N_x[v,r]$ and $N_x^*[v,r]$ is a decreasing function of $r$.
\item For any $\theta \in (0,1]$,
\begin{equation}
\theta^{-\frac{1}{3}} N_x[v,r] \leq N_x[v,\theta r] \leq \theta^{-\frac{1}{2}} N_x[v,r],\label{scaling1}
\end{equation}
\begin{equation}
\theta^{-\frac{1}{3}} N^*_x[v,r] \leq N^*_x[v,\theta r] \leq \theta^{-\frac{1}{2}} N^*_x[v,r].\label{scaling2}
\end{equation}
\item Suppose that $x$ and $y$ are any two points in a Euclidean ball of radius $K^{-\frac{1}{3}} d^\frac{1}{3}$.  Then for any $r \leq d$,
\begin{equation} N_y[v, d^\frac{1}{3} r^\frac{2}{3}] \lesssim N_x[v,r], \label{normineq} \end{equation}
\begin{equation} N^*_y[v, d^\frac{1}{4} r^\frac{3}{4}] \lesssim N_x^*[v,r]. \label{dualineq} \end{equation}
\item Suppose $N_x[v] := \inf \set{r > 0}{ N_x[v,r] < 1}$ and likewise for $N^*_x[v]$.  Then 
\begin{equation}
 \left( N_x[v+w] \right)^\frac{1}{3} \leq \left( N_x[v] \right)^\frac{1}{3} + \left( N_x[w] \right)^\frac{1}{3}, \label{triangle}
\end{equation}
\begin{equation}
 \left( N^*_x[v+w] \right)^\frac{1}{2} \leq \left( N^*_x[v] \right)^\frac{1}{2} + \left( N_x^*[w] \right)^\frac{1}{2}. \label{dualtriangle}
\end{equation}
\item  For any two vectors $v,w \in \R^n$, 
\begin{equation} 
|v \cdot w| \leq r N_x[v,r] N_x^*[w,r] \label{csineq}
\end{equation}
moreover, for any $v$ there is a $w$ such that both sides are equal (and likewise with the roles reversed).  In addition,
\begin{equation}
|v \cdot H_x w| \leq r N_x[v,r] N_x[w,r]. \label{csineq2}
\end{equation}
\end{enumerate}
\end{proposition}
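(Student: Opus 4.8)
To prove Proposition \ref{normprop}, the plan is to read off every item from the spectral decomposition $v=\sum_\mu E_x^\mu v$ together with the explicit form of the weight attached to each eigenvalue of $H_x$. Write $g_\mu(r):=(|\mu|r)^{1/2}+(Kr^2)^{1/3}$, so that $N_x[v,r]^2=\sum_\mu(r^{-1}g_\mu(r))^2|E_x^\mu v|^2$ and $N_x^*[v,r]^2=\sum_\mu g_\mu(r)^{-2}|E_x^\mu v|^2$, and record the two pointwise facts $r^{-1}g_\mu(r)=|\mu|^{1/2}r^{-1/2}+K^{1/3}r^{-1/3}$ and $|\mu|\le r^{-1}g_\mu(r)^2$. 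Since $H_x$ has at most $n$ distinct eigenvalues, every sum over $\mu$ has at most $n$ terms, so I may pass between $\ell^1$ and $\ell^2$ norms in $\mu$ at the cost of a factor $\sqrt n$; I will do this silently.

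Items 1, 2 and 5 are ``local'' in $\mu$. For item 1: $r^{-1}g_\mu(r)$ is a sum of two decreasing powers of $r$ and $g_\mu(r)$ is increasing, so each summand of $N_x[v,r]^2$ and $N_x^*[v,r]^2$ is decreasing in $r$, hence so are the sums. For item 2: writing $g_\mu(\theta r)=\theta^{1/2}(|\mu|r)^{1/2}+\theta^{2/3}(Kr^2)^{1/3}$ and using $\theta^{2/3}\le\theta^{1/2}$ for $\theta\in(0,1]$ squeezes $g_\mu(\theta r)$, and hence $r^{-1}g_\mu(\cdot)$ and $g_\mu(\cdot)^{-1}$, between two fixed powers of $\theta$ times its value at $r$; the $\ell^2$ sum in $\mu$ then yields \eqref{scaling1}--\eqref{scaling2}. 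For item 5: by orthogonality of the $E_x^\mu$ one has $v\cdot w=\sum_\mu E_x^\mu v\cdot E_x^\mu w$, and pairing $r^{-1}g_\mu(r)|E_x^\mu v|$ with $r\,g_\mu(r)^{-1}|E_x^\mu w|$ inside each summand and applying Cauchy--Schwarz over $\mu$ gives \eqref{csineq}; inequality \eqref{csineq2} is the same argument starting from $v\cdot H_x w=\sum_\mu\mu\,(E_x^\mu v\cdot E_x^\mu w)$ and $|\mu|\le r^{-1}g_\mu(r)^2$. Equality in \eqref{csineq} is attained by the ``dual'' vector $w$ with $E_x^\mu w=r^{-2}g_\mu(r)^2 E_x^\mu v$ for every $\mu$ (and symmetrically $v$ in terms of $w$), which makes both Cauchy--Schwarz steps tight.

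Item 3 is the one that uses the spectral perturbation bound \eqref{spectrumpert2}, and I expect it to be the main obstacle — not for any conceptual reason, but because three bookkeeping choices (which of $x,y$ is taken as the first factor, which of $r_1,r_2$ is $r$ and which is the $y$-scale, and which branch of each $\min/\max$ in \eqref{spectrumpert2} to retain) must be aligned simultaneously for the weights to telescope. Let $s$ denote the $y$-scale, i.e.\ $d^{1/3}r^{2/3}$ for \eqref{normineq} and $d^{1/4}r^{3/4}$ for \eqref{dualineq}; in both cases $r\le s\le d$, and $|x-y|\le K^{-1/3}d^{1/3}$ after enlarging $d$ by a dimensional constant if necessary to cover the diameter of the ball. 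Since $H_y$ has at most $n$ eigenvalues $\mu'$, it suffices to bound, for each $\mu'$, the single quantity $s^{-1}g_{\mu'}(s)|E_y^{\mu'}v|$ (for \eqref{normineq}) or $g_{\mu'}(s)^{-1}|E_y^{\mu'}v|$ (for \eqref{dualineq}) by a dimensional constant times $N_x[v,r]$, resp.\ $N_x^*[v,r]$. Expanding $v=\sum_\mu E_x^\mu v$ and using $|E_y^{\mu'}v|\le\sum_\mu\|E_y^{\mu'}E_x^\mu\|\,|E_x^\mu v|$, I feed in \eqref{spectrumpert2} in the appropriate orientation and discard the $\min$ and $\max$ so that the factors $g_{\mu'}(s)$ and $g_\mu(r)$ produced by \eqref{spectrumpert2} cancel against the external weight and rebuild $N_x[v,r]$, resp.\ $N_x^*[v,r]$, up to a monomial in $d,r,s$; the exponents $\tfrac13$ and $\tfrac14$ defining $s$ are exactly the ones for which that monomial is precisely the normalizing factor of the target norm. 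Summing over $\mu$ with Cauchy--Schwarz, and then over the (at most $n$) values of $\mu'$, completes item 3.

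Item 4 follows formally from items 1 and 2. For fixed $r$, $v\mapsto N_x[v,r]$ is the norm associated to the positive operator $\sum_\mu(r^{-1}g_\mu(r))^2E_x^\mu$, so it obeys the ordinary triangle inequality $N_x[v+w,r]\le N_x[v,r]+N_x[w,r]$, and likewise for $N_x^*[\cdot,r]$. Put $a=N_x[v]$, $b=N_x[w]$ and $t=(a^{1/3}+b^{1/3})^3$; since $r\mapsto N_x[v,r]$ is continuous and decreasing one has $N_x[v,a]\le 1$, and the lower half of \eqref{scaling1} with $\theta=a/t\le 1$ gives $N_x[v,t]\le (a/t)^{1/3}N_x[v,a]\le (a/t)^{1/3}$, and similarly for $w$. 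Adding, $N_x[v+w,t]\le t^{-1/3}(a^{1/3}+b^{1/3})=1$, so $N_x[v+w]\le t$, which is \eqref{triangle} after taking cube roots. The proof of \eqref{dualtriangle} is word for word the same, with the exponent $\tfrac12$ in place of $\tfrac13$ and \eqref{scaling2} in place of \eqref{scaling1}.
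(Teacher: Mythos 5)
Your argument is sound, and items 1, 3, and 5 track the paper's own proof essentially line for line.  The bookkeeping you flag in item 3 is exactly what the paper does: apply \eqref{spectrumpert2} with the two scales matched so that (after discarding $\min\le|\mu|$ and $\max\ge|\mu'|$) the factor $g_{\mu'}(\cdot)/g_\mu(\cdot)$ cancels the outer weight; the surviving monomial is $d^{1/3}s^{-1}r^{2/3}$ for $N$ and $d^{1/3}r(s')^{-4/3}$ for $N^*$, and both collapse to $1$ at your choices $s=d^{1/3}r^{2/3}$ and $s'=d^{1/4}r^{3/4}$.  For item 4 your route is genuinely different from the paper's.  The paper proves the quasi-triangle inequalities directly via an elementary convexity inequality ($\frac{a+\alpha a^p}{b+\alpha b^p}\le\frac{a}{b}$ when $a\le b$, $p>1$) applied to the weights $\phi_j,\tilde\phi_j$ at the pair $(r_1,r_2)$; you instead derive them formally by combining ordinary subadditivity of $v\mapsto N_x[v,r]$ at fixed $r$ with the scaling bound of item 2.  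Your version is arguably cleaner — it reuses item 2 rather than introducing a separate lemma, and makes transparent why $\tfrac13$ and $\tfrac12$ appear: they are the lower-bound exponents in the two scaling inequalities.

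One wrinkle you should flag: \eqref{scaling2} as printed cannot be correct.  Your squeeze $\theta^{2/3}g_\mu(r)\le g_\mu(\theta r)\le\theta^{1/2}g_\mu(r)$ yields
\[
\theta^{-1/2}N^*_x[v,r]\le N^*_x[v,\theta r]\le\theta^{-2/3}N^*_x[v,r],
\]
not the printed $\theta^{-1/3}$ and $\theta^{-1/2}$.  The printed upper bound is in fact false: if $v$ lies in the kernel of $H_x$ then $g_0(\theta r)^{-1}=\theta^{-2/3}g_0(r)^{-1}$ exactly, so $N^*_x[v,\theta r]=\theta^{-2/3}N^*_x[v,r]>\theta^{-1/2}N^*_x[v,r]$ for $\theta\in(0,1)$.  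This does not undermine your proof — your item 4 argument for \eqref{dualtriangle} needs precisely the lower-bound exponent $\tfrac12$, which is what your item 2 computation actually delivers (and which the paper's own proof of \eqref{dualtriangle} also implicitly uses, via $\tilde\phi_j(r_1)/\tilde\phi_j(t)\le(r_1/t)^{1/2}$).  But be careful not to assert that your computation ``yields \eqref{scaling1}--\eqref{scaling2}'' verbatim: for $N^*$ it produces a different, and in one direction incompatible, pair of exponents, and it is the corrected pair that your item 4 relies on.
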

\begin{proof}
Properties 1 and 2 follow from an elementary inspection of the definition.  Property 3 is a consequence of \eqref{spectrumpert2}, via the triangle inequality.  For example,
\begin{align*}
 (N_y [v,r'])^2 & \leq n {r'}^{-2} \sum_\mu \sum_{\mu'} || E^\mu_y E^{\mu'}_x||^2 |E^{\mu'}_x v|^2 \left( (|\mu| r')^\frac{1}{2}+ (K {r'}^2)^\frac{1}{3} \right)^2  \\
& \leq 4 n^2 d^\frac{1}{3} r'^{-1} r^{-\frac{4}{3}} \sum_{\mu'} |E^{\mu'}_x v|^2 \left( (|\mu'| r)^\frac{1}{2}+ (K {r}^2)^\frac{1}{3} \right)^2\\
 & =  4 n^2 d^\frac{1}{3} r'^{-1} r^\frac{2}{3} (N_x[v,r])^2, 
\end{align*}
keeping in mind that \eqref{spectrumpert2} requires that $r'$ and $r$ be no greater than $d$. Taking $r' = d^\frac{1}{3} r^\frac{2}{3}$ gives \eqref{normineq}.  %Taking $r' = 8n^2 d^\frac{1}{3} r^\frac{2}{3}$ and imposing the condition $r \leq n^{-3} d$ gives the conclusion \eqref{normineq}.  
As for \eqref{dualineq}, the reasoning is similar:
\begin{align*}
(N^*_x [v,r''])^2 & \leq n \sum_{\mu} \sum_{\mu'} \frac{||E_x^\mu E_y^{\mu'} ||^2 |E_y^{\mu'}v|^2}{\left((|\mu| r'')^\frac{1}{2}+ (K {r''}^2)^\frac{1}{3} \right)^2} \\
 & \leq 4 n^2 d^\frac{1}{3} {r} {r''}^{-\frac{4}{3}} \sum_{\mu'} \frac{|E_x^{\mu'}v |}{(|\mu'| r)^\frac{1}{2}+ (K r^2)^\frac{1}{3}} \\ 
& =  4 n^2 d^\frac{1}{3} {r} {r''}^{-\frac{4}{3}} (N_x^*[v,r])^2.
\end{align*}
This time, taking $r'' = d^\frac{1}{4} r^\frac{3}{4} \leq d$ gives \eqref{dualineq}.
To prove property 4, first observe that for any positive $\alpha, a, b,$ and $p > 1$, if $\frac{a}{b} \leq 1$, then \[\frac{a + \alpha a^p}{b + \alpha b^p} \leq \frac{a}{b}.\]
If $\phi_j(r) := (|\mu_j|^\frac{1}{2} r^{-\frac{1}{2}} + K^\frac{1}{3} r^{-\frac{1}{3}})^{-1}$ and $\tilde \phi_j(r) := (|\mu_j|^\frac{1}{2} r^{\frac{1}{2}} + K^\frac{1}{3} r^{\frac{2}{3}})$, it follows that
\[ \frac{\phi_j(r_1)}{\phi_j((r_1^\frac{1}{3}+r_2^\frac{1}{3})^3)} \leq \frac{r_1^\frac{1}{3}}{r_1^\frac{1}{3} + r_2^\frac{1}{3}} \mbox{ and } \frac{\tilde \phi_j(r_1)}{\tilde \phi_j((r_1^\frac{1}{2} + r_2^\frac{1}{2})^2)} \leq \frac{r_1^\frac{1}{2}}{r_1^\frac{1}{2} + r_2^\frac{1}{2}}. \]
Therefore, by convexity, the following inequality holds for any positive numbers $A_j,B_j$:
\[ \sum_j \left(\frac{A_j+B_j}{\phi_j((r_1^\frac{1}{3} + r_2^\frac{1}{3})^3)} \right)^2 \leq \frac{r_1^\frac{1}{3}}{r_1^\frac{1}{3}+r_2^\frac{1}{3}} \sum_{j} \left( \frac{A_j}{\phi_j(r_1)} \right)^2 + \frac{r_2^\frac{1}{3}}{r_1^\frac{1}{3}+r_2^\frac{1}{3}} \sum_{j} \left( \frac{A_j}{\phi_j(r_2)} \right)^2 \]
and likewise for $\tilde \phi_j$.  This gives the triangle inequalities \eqref{triangle} and \eqref{dualtriangle} when $A_j = |E_x^{\mu_j} v|$ and $B_j = |E_x^{\mu_j} w|$.
Finally, \eqref{csineq} and \eqref{csineq2} follow immediately from Cauchy-Schwartz.  Moreover, taking $w := \sum_\mu (|\mu|^\frac{1}{2} r^\frac{1}{2} + (K r^2)^\frac{1}{3})^2 E_x^\mu v$ gives equality.  %The inequality \eqref{csineq2} also follows from Cauchy-Schwartz.
%
%The proof of property 4 is elementary, reqiring in the case of $N_x[v]$ the identity $r_1^\frac{1}{2} + r_2^\frac{1}{2} \leq (r_1^\frac{1}{3} + r_2^\frac{1}{3})^\frac{3}{2}$.  As for $N_x^*[v]$, the identity is slightly more subtle.  First is the observation that when $s t^{-1} \leq 1$, $(s + \alpha s^{\frac{4}{3}}) (t + \alpha t^\frac{4}{3})^{-1} \leq s t^{-1}$ for any positive $\alpha$.  Then apply the identity, where $t = r_1^\frac{1}{2} + r_2^\frac{1}{2}$ and $s$ is either $r_1^\frac{1}{2}$ or $r_2^\frac{1}{2}$.
\end{proof}
%
%Now we're given $\Omega \subset \R^{m} \times \R^{n-m}$ and smoothly varying $m \times (n-m)$ matrix $M_x$.  Then for any $z := (y,\eta)$, {\bf BAD NOTATION!}
%\[B(z,r) := \set{ (x,\xi) \in \R^{m} \times \R^{n-m}}{ (N_z[x-y,r])^2 + (N_z^*[M_z(\xi - \eta),r])^2 < 1} \]

Given the facts listed in proposition \ref{normprop}, the construction proceeds as follows: at each point $y \in \Omega$, there is a natural family of nonisotropic balls centered at $y$ which is induced by the norm $N_y$.  To be precise, let
 %\[B(y,r) := \set{x \in \R^n}{\forall j \ |v_j \cdot (x-y)| \left( (|\mu_j| r)^\frac{1}{2} + (C n r^2)^\frac{1}{3} \right) < r }.\]
\begin{equation}
B(y,r) := \set{x \in \R^n}{ N_y[x-y,r] < 1}
%\sum_{\mu} \ |E^{\mu}_y (x-y)| \left( (|\mu| r)^\frac{1}{2} + (K r^2)^\frac{1}{3} \right) < r }. 
\label{balls}
\end{equation}
(note: for technical reasons and ease of proof, the balls $B(y,r)$ are taken to extend outside of $\Omega$ if $y$ is close to $\partial \Omega$ and/or $r$ is sufficiently large.)
One also makes the following definition for convenience:  given points $x,y \in \Omega$, let $d(x,y) := N_x[x-y]$ (as defined in property 4 of proposition \eqref{normprop}).
Proposition \ref{ballcomp} outlines some of the fundamental properties and relationships satisfied by this family of balls.  In short, the set $\Omega$ equipped with the balls $B(y,r)$ is a symmetric space in the sense of Coifman and Weiss \cite{cw1971} (just as in the work of Bruna, Nagel, and Wainger \cite{bnw1988}). %the point is that nearly all of the usual properties of the Euclidean family of balls are also satisfied by the family \eqref{balls} (and most abstract properties of the Euclidean metric are shared by the quasi-metric $d(x,y)$).  
The proofs are, for the most part, applications of the facts established in proposition \eqref{normprop}.
%
%{\bf KEEP TRACK OF $K$, $\mu_\infty$ and $C$ where $|M_x v| \geq C |v|$.}%
%{\bf ALSO DERIVATIVES OF $M_x$ are bounded by $K$...}

\begin{proposition}
The following facts are true of the family of balls \eqref{balls}:
\label{ballcomp}
\begin{enumerate}
\item For every $x \in \Omega$, the balls $B(x,r)$ are nested in the usual way:  $B(x,r) \subset B(x,r')$ when $r' > r$; moreover $B(x,r)$ is contained in the standard Euclidean ball of radius $K^{-\frac{1}{3}} r^\frac{1}{3}$.
\item The balls \eqref{balls} satisfy the doubling property, i.e., for any $B(x,r) \subset \Omega$, $|B(x,r)| \leq 2^\frac{n}{2} |B(x,\frac{1}{2}r)|$, where $|B(x,r)|$ denotes the Lebesgue measure of the ball $B(x,r)$.
\item For any $y \in B(x,r) \cap \Omega$, $B(x,r) \subset B(y, r')$ for some $r' \approx r$.
\item There is a covering of $B(x,r) \subset \Omega$ by balls $B(y,\delta r)$ such that the total number of balls in the covering depends only on $\delta$ and $n$.
\item For all $\delta \lless 1$, if $y \in \Omega$ is on the boundary of $B(x,r)$ (i.e., $r = d(x,y)$), then $B(y,\delta r)$ is contained in $B(x,2r)$ but does not intersect $B(x,\frac{1}{2} r)$.
\end{enumerate}
\end{proposition}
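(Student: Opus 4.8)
The plan is to deduce all five properties from Proposition \ref{normprop} by treating the balls $B(x,r)$ concretely as Euclidean ellipsoids. The starting observation is that $N_x[\cdot,r]$ is the norm $v \mapsto r^{-1}|A_{x,r}v|$, where $A_{x,r} := \sum_\mu ((|\mu|r)^{1/2} + (Kr^2)^{1/3}) E_x^\mu$ is symmetric and positive definite; hence $B(x,r) = x + r A_{x,r}^{-1} B_1$, with $B_1$ the open Euclidean unit ball. The nesting in property 1 is then immediate from the monotonicity of $N_x[v,\cdot]$ (property 1 of Proposition \ref{normprop}), and the inclusion of $B(x,r)$ in the Euclidean ball of radius $K^{-1/3}r^{1/3}$ follows because every eigenvalue of $A_{x,r}$ is at least $(Kr^2)^{1/3}$, so $N_x[v,r] \geq K^{1/3}r^{-1/3}|v|$. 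For property 2, I will write $|B(x,r)| = \omega_n r^n (\det A_{x,r})^{-1}$ and note that the eigenvalue of $A_{x,r/2}$ attached to a spectral value $\mu$ of $H_x$ equals $2^{-1/2}(|\mu|r)^{1/2} + 2^{-2/3}(Kr^2)^{1/3}$, which is at most $2^{-1/2}$ times the corresponding eigenvalue of $A_{x,r}$; thus $\det A_{x,r/2} \leq 2^{-n/2}\det A_{x,r}$, and $|B(x,r)|/|B(x,r/2)| = 2^n \det A_{x,r/2}/\det A_{x,r} \leq 2^{n/2}$.

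For the engulfing property 3, note that $y \in B(x,r)$ forces $|x-y| < K^{-1/3}r^{1/3}$, so \eqref{normineq} applies with $d \approx r$. Given $z \in B(x,r)$, both $N_x[z-x]$ and $N_x[y-x]$ are at most $r$, so the quasi-triangle inequality \eqref{triangle} gives $N_x[z-y] \lesssim r$; converting this bound on $N_x[z-y]$ to one on $N_y[z-y]$ via \eqref{normineq} (with the roles of $x$ and $y$ reversed) and then enlarging the radius using the scaling \eqref{scaling1} yields $N_y[z-y] \lesssim r$, i.e.\ $z \in B(y,r')$ for a suitable $r' \approx r$. Property 4 is then the standard covering lemma for spaces of homogeneous type: I will take a maximal family $y_1,\dots,y_N \in B(x,r)$ whose balls $B(y_i,c\delta r)$ are pairwise disjoint; maximality together with property 3 forces the dilates $B(y_i,\delta r)$ to cover $B(x,r)$, while disjointness, property 3, and the doubling property 2 iterated $\lesssim \log(1/\delta)$ times bound $N$ by a power of $1/\delta$ times a dimensional constant — a quantity depending only on $\delta$ and $n$.

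Finally, property 5 is once more an application of \eqref{triangle} and \eqref{normineq}. Let $r = d(x,y) = N_x[x-y]$ and $z \in B(y,\delta r)$. As in property 3, I will first convert $N_y[z-y] \leq \delta r$ into $N_x[z-y] \leq C\delta^{2/3}r$ using \eqref{normineq} with $x,y$ interchanged and $d \approx r$, followed by a rescaling of the radius. The quasi-triangle inequality then gives $(N_x[z-x])^{1/3} \leq (N_x[z-y])^{1/3} + (N_x[y-x])^{1/3} \leq (C\delta^{2/3}r)^{1/3} + r^{1/3}$, which is $< (2r)^{1/3}$ once $\delta$ is small enough in terms of $n$, so $z \in B(x,2r)$. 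If moreover $z \in B(x,r/2)$, then $(N_x[x-y])^{1/3} \leq (N_x[x-z])^{1/3} + (N_x[z-y])^{1/3} \leq (r/2)^{1/3} + (C\delta^{2/3}r)^{1/3} < r^{1/3}$ for $\delta$ small, contradicting $N_x[x-y] = r$; hence $B(y,\delta r) \cap B(x,r/2) = \emptyset$.

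I expect the main obstacle to be property 4, specifically the bookkeeping needed to certify that the number of balls in the covering depends only on $\delta$ and $n$ — and not on $x$, $r$, or $K$ — which requires iterating the doubling property carefully and handling the quasi-metric $d$ (which only obeys a triangle inequality after the cube-root distortion of \eqref{triangle}, and only passes between base points at the cost of \eqref{normineq}) correctly in the maximal-separation argument. The remaining properties are essentially linear-algebra computations with the ellipsoids $A_{x,r}$, together with careful attention to the strict-versus-nonstrict inequalities implicit in the definitions of $N_x[v]$ and $d(x,y)$.
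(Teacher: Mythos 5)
Your proposal is correct; properties 1, 3, and 5 follow the same route as the paper (monotonicity of $N_x[\cdot,r]$ and the lower bound $N_x[v,r] \geq K^{1/3} r^{-1/3}|v|$ for property 1, the honest triangle inequality for $N_y[\cdot,r]$ together with \eqref{normineq} and \eqref{scaling1} for property 3, and \eqref{normineq} plus the cube-root quasi-triangle \eqref{triangle} for property 5). The two places where you diverge are properties 2 and 4, and the comparison is worth making explicit.

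For the doubling property 2 you compute directly with the determinant of the ellipsoid matrix $A_{x,r}$, using that the per-eigenvalue ratio $\bigl(2^{-1/2}(|\mu|r)^{1/2} + 2^{-2/3}(Kr^2)^{1/3}\bigr)/\bigl((|\mu|r)^{1/2} + (Kr^2)^{1/3}\bigr) \leq 2^{-1/2}$; the paper instead shows the set-theoretic inclusion $B(x,r) \subset x + 2^{1/2}(B(x,\tfrac{1}{2}r) - x)$ from the scaling estimate \eqref{scaling1} and the homogeneity of $N_x[\cdot,r]$. These give the identical constant $2^{n/2}$ and are equally elementary. For the covering property 4, however, the two arguments are structurally distinct. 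The paper exploits the ellipsoid shape directly: it introduces the lattice $\Lambda_\epsilon = x + \epsilon(\Z r_1 v_1 + \cdots + \Z r_n v_n)$ along the principal axes, verifies via \eqref{scaling1} that small nonisotropic balls centered at lattice points tile $B(x,r)$, and counts lattice points, giving a constructive cover with $\lesssim_n \epsilon^{-n}$ balls that never invokes doubling. Your argument is the standard Vitali maximal-separation / measure-counting scheme for spaces of homogeneous type, which does need property 2 (iterated $\sim \log(1/\delta)$ times) and property 3, and also needs the quasi-triangle inequality for $d(\cdot,\cdot)$ to convert disjointness of small balls into a pigeonhole bound. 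That quasi-triangle inequality is not directly one of the items in proposition \ref{normprop} (since $d(x,y) = N_x[x-y]$ and $d(y,z) = N_y[y-z]$ are measured from different base points), so you must run \eqref{normineq} to transport one norm to the other before applying \eqref{triangle}, as you correctly flag. Both methods work, but the paper's lattice construction is tighter here: it is self-contained within property 1 and \eqref{scaling1}, avoids any dependence on how the doubling constant compounds, and sidesteps the base-point-transport issue entirely. Your approach has the compensating advantage of being completely generic for doubling quasi-metric spaces, so it would survive any modification of the definition of $N_x$ that preserved properties 2 and 3.
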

\begin{proof}
Both parts of property 1 are elementary:  the first follows from the fact that $N_x[v,r]$ is decreasing in $r$. %and $N_x^*[v,r]$ are decreasing in $r$.  
The second part follows from the observation that $K^\frac{1}{3} r^{-\frac{1}{3}} |x-y| \leq N_x[y-x,r]$, which is less than $1$ if $y \in B(x,r)$.% + (N_z^*[M_z w,r])^2$ is always greater than or equal to
%\[ (K r^{-1})^\frac{2}{3} \left( |v|^2 + \frac{C^2|w|^2}{(K^\frac{1}{2} |\mu_\infty|^\frac{1}{2} (K^{-1} r)^\frac{1}{6} + K (K^{-1} r)^\frac{1}{3} )^2} \right).\]
%Thus, when $K^{-\frac{1}{3}} r^\frac{1}{3}$ is sufficently small (depending on the size of $C (|K \mu_\infty|^\frac{1}{2} + K)^{-1}$), the inequality $(N_z[v,r])^2 + (N_z^*[M_z w,r])^2 < 1$ implies that $|v|^2 + |w|^2 \leq (K^{-\frac{1}{3}} r^\frac{1}{3})^2$.

Property 2 follows from \eqref{scaling1}; since $N_x[v,r]$ is sublinear in $v$, it must be the case that $N_x[2^{-\frac{1}{2}}(x-y), \frac{1}{2} r] \leq N_x[x-y,r]$ (and likewise for $N_x^*$). Taking the right-hand less than $1$ shows that $B(x,r)$ is contained in the standard Euclidean dilation of $B(x,\frac{1}{2} r)$ by a factor of $\sqrt{2}$, i.e., $B(x,r) \subset x + 2^\frac{1}{2}(B(x,\frac{1}{2}r)-x)$. 

As for property 3, $N_y[z-y,r] \leq N_y[x-y,r] + N_y[x-z,r] \lesssim N_x[x-y,r] + N_x[x-z,r]$.  Since the right-hand side is no greater than two, an application of \eqref{scaling1} gives $N_y[z-y,r'] < 1$ for some $r' \approx r$.  More generally, if $z \in B(x,\delta r)$ and $y \in B(x,r)$ for some $0 < \delta < 1$, then $N_y[z-x] \lesssim \delta^\frac{2}{3} r$ by \eqref{normineq}.
Thus the ball $B(x,\delta r)$, when translated to have center at $y$, is contained in the ball $B(y,\delta^\frac{2}{3} r')$ for some $r' \approx r$. 
%
%$(N_y[v,8n^\frac{3}{2} \delta^\frac{2}{3} r])^2 + (N_y[M_x w, 8 n^\frac{3}{2} \delta^\frac{2}{3} r])^2 \leq (N_x[v,\delta r])^2 + (N_x[M_x w, \delta r])^2$ when $y \in B(x,r)$ and $N_y[M_y w, 8 n^\frac{3}{2} \delta^\frac{1}{2} r] < (4 n^\frac{1}{2})^{-1}$ when $|w| < K^{-\frac{1}{3}} \delta^\frac{1}{3} r^\frac{1}{3}$.  Altogether, it must be the case that $z \in B(x,\delta r)$ implies $z+(y-x) \in B(y,32 n^\frac{3}{2} \delta^\frac{1}{2} r)$ when $y \in B(x,r)$.

To establish property four, notice that the ball $B(x,r)$ is an ellipsoid in $\R^n$; suppose it is given by \[B(x,r) = \set{y \in \R^n}{\sum_{j=1}^n r_j^{-2} (v_j \cdot (y-x))^2 < 1}\]
 for orthonormal vectors $v_j$ and radii $r_j$.  Let $\Lambda_\epsilon := x + \epsilon ( \Z r_1 v_1 + \cdots + \Z r_n v_n)$.  By \eqref{scaling1}, it must be the case that $x + \epsilon \sum_{j=1}^n \theta_j r_j v_j$ is in the ball $B(x,2n \epsilon^2 r)$ when $\sum_j \theta_j^2 < 2n$ and $\epsilon^2 \leq 2n$.  However, every point in $y \in B(x,r)$ is near to a point in $z \in B(x,r) \cap \Lambda_\epsilon$ (meaning that $y-z = \epsilon \sum_{j=1}^n \theta_j r_j v_j$ for some $\theta_j$'s of absolute value less than or equal to one). Therefore, the collection of all translates of $B(x,\epsilon^2 r)$ shifted to have centers at the points of $\Lambda_\epsilon \cap B(x,r)$ covers $B(x,r)$.  Thus, for some $r' \approx r$, the balls $B(y,(2 n \epsilon^2)^\frac{2}{3} r')$ for $y \in \Lambda_\epsilon \cap B(x,r)$ cover $B(x,r)$, and the number of such balls depends only on $\epsilon$ and $n$.

Finally, consider property 5.  
Combining \eqref{normineq} with \eqref{triangle} gives that, when $d(y,z) \leq d(x,y)$,
\[  \left| (d(x,z))^\frac{1}{3} - (d(x,y))^\frac{1}{3}  \right| \lesssim (d(x,y)^\frac{1}{3} d(y,z)^\frac{2}{3})^\frac{1}{3}, \]
%\[ (d(x,y))^\frac{1}{3} \leq (d(x,z))^\frac{1}{3} + (8 n^2 d(x,y)^\frac{1}{3} d(y,z)^\frac{2}{3})^\frac{1}{3}\]
(applying the  triangle inequality to $x-z = (x-z) + (z-y)$ or $x-y = (x-z) + (z-y)$ depending on whether the quantity in absolute values is positive or negative).  In particular, if $d(y,z)/d(x,y)$ is sufficiently small (in terms of $n$), then $d(x,z)/d(x,y)$ must be between $\frac{1}{2}$ and $2$.
%Let $d(x,y)$ be the radius of the smallest ball $B(x,r)$ containing $y$.  A minor modification of the arguments leading to \eqref{triangle} and \eqref{dualtriangle} also give the triangle inequality
%\[ (d(x,z))^\frac{1}{3} \leq (d(x,y))^\frac{1}{3} + (d(x,z-y+x))^\frac{1}{3}. \]
%As already noted, if $d(x,y) = r$ and $d(y,z) \leq \delta r$, then $d(x,z-y+x) \lesssim \delta^\frac{1}{2} r$.  Since $d(x,z-y+x) = d(x,y-z+x)$, for appropriate $\delta \lless 1$, $d(y,z) \leq \delta r$ implies that $\frac{1}{2} r \leq d(x,z) \leq 2 r$.
\end{proof}

In later computations, it will be absolutely crucial to keep track of the amount to which a given ball $B(y,r)$ deviates from a standard Euclidean ball (of radius $K^{-\frac{1}{3}} r^\frac{1}{3}$).  The simplest way to record this information is to count how many of the eigenvalues of $H_y$ are big.  To that end, there are several new definitions in order.  First, 
given a ball $B(y,r)$, let $\locrank_s B(y,r)$ (called the $s$-local rank) be the dimension of the space spanned by all eigenvectors of $H_y$ with eigenvalues $\mu$ satisfying the inequality $|\mu| > s (K^2 r)^\frac{1}{3}$.
Given a ball $B(y,r)$, let $ \rank_s B(y,r)$ be the infimum of $\locrank_s B(x,r)$ for all $x \in B(y,r)$.

Another important consideration in what follows is the extent to which the spectrum of $H_y$ has large gaps.  Gaps will, in fact, be desirable, since then the big eigenvalues and the small ones will be easily distinguished (and accounted for separately).  To be more precise, $B(z,r)$ will be said to have a local spectral gap on $(a,b]$ if $\locrank_a B(z,r) = \locrank_b B(z,r)$; likewise $B(z,r)$ has a spectral gap on $(a,b]$ when $B(x,r)$ has a local spectral gap on $(a,b]$ for all $x \in B(z,r)$ (in which case $\rank_a B(z,r) = \rank_b B(z,r)$).  The next proposition formalizes the intuition that both high ranks and large spectral gaps must be preserved if one perturbs the center of the ball slightly:
\begin{proposition}
Suppose that $\locrank_s B(x,r) = k$.  Then for all $y \in B(x,\delta r) \cap \Omega$, $\locrank_s B(y,(1-s^{-1} \delta^\frac{1}{3})^3 r) \geq k$.  Moreover, $\rank_s B(x, (\frac{s}{s+1})^3 r) \geq k$ as well.
If, in addition, $B(x,r)$ has a local spectral gap on $(a,b]$.  Then for any $y \in B(x,\delta r) \cap \Omega$, $B(y,r)$ has a local spectral gap on $(a + \delta^\frac{1}{3}, b - \delta^\frac{1}{3}]$. \label{gap}
\end{proposition}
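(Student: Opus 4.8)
The plan is to derive all three assertions from the single observation that, for $y \in B(x,\delta r)$, property~1 of Proposition~\ref{ballcomp} gives $|x-y| < K^{-\frac{1}{3}}(\delta r)^\frac{1}{3}$, so that, by the mean-value-theorem estimate already used in the proof of \eqref{spectrumpert},
\[ \|H_x - H_y\| \le K|x-y| < \delta^\frac{1}{3}(K^2 r)^\frac{1}{3}. \]
Write $c := (K^2 r)^\frac{1}{3}$. Each assertion is then a routine ``nearly invariant subspace'' argument: a high- or low-spectral subspace of $H_x$ is nearly invariant under $H_y$ with error controlled by $\delta^\frac{1}{3} c$.

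For the first assertion I would argue by contradiction and a dimension count. Let $V$ be the span of the eigenvectors of $H_x$ with $|\mu| > sc$, so $\dim V = k$, and let $W$ be the span of the eigenvectors of $H_y$ with $|\mu| > s(K^2 r')^\frac{1}{3}$, where $r' = (1 - s^{-1}\delta^\frac{1}{3})^3 r$ (one may assume $\delta^\frac{1}{3} < s$, since otherwise $r' \le 0$ and the claim is vacuous); note $s(K^2 r')^\frac{1}{3} = (s - \delta^\frac{1}{3}) c$. If $\dim W < k$, then $\dim V + \dim W^\perp > n$ and hence there is a nonzero $v \in V \cap W^\perp$. Since $v$ lies in the high-spectral subspace of $H_x$ one has $|H_x v| > sc\,|v|$, and since $v$ lies in the low-spectral subspace of $H_y$ one has $|H_y v| \le (s - \delta^\frac{1}{3})c\,|v|$; combining,
\[ sc\,|v| < |H_x v| \le |H_y v| + \|H_x - H_y\|\,|v| < (s - \delta^\frac{1}{3})c\,|v| + \delta^\frac{1}{3}c\,|v| = sc\,|v|, \]
a contradiction. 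Therefore $\locrank_s B(y,r') = \dim W \ge k$. The statement about $\rank_s$ follows at once by taking $\delta = s^3/(s+1)^3$: then $1 - s^{-1}\delta^\frac{1}{3} = s/(s+1)$, so $r' = (s^3/(s+1)^3)\,r$, and applying the first assertion at each $z \in B(x, (s^3/(s+1)^3)\,r) \cap \Omega$ gives $\locrank_s B(z, (s^3/(s+1)^3)\,r) \ge k$; the infimum over such $z$ is then $\ge k$, which is the claim.

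For the spectral-gap assertion I would again argue by contradiction. Suppose $H_y$ has a unit eigenvector $u$ with eigenvalue $\nu$ satisfying $(a + \delta^\frac{1}{3})c < |\nu| \le (b - \delta^\frac{1}{3})c$. The hypothesis that $B(x,r)$ has a local spectral gap on $(a,b]$ says exactly that $H_x$ has no eigenvalue with $|\mu| \in (ac, bc]$, so if $\Pi_{\mathrm{hi}}$ and $\Pi_{\mathrm{lo}}$ denote the spectral projections of $H_x$ onto $\{|\mu| > bc\}$ and $\{|\mu| \le ac\}$ then $\Pi_{\mathrm{hi}} + \Pi_{\mathrm{lo}} = I$. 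Set $e := (H_x - H_y)u$, so that $|e| < \delta^\frac{1}{3}c$ and $|e|^2 = |\Pi_{\mathrm{hi}} e|^2 + |\Pi_{\mathrm{lo}} e|^2$. Applying $\Pi_{\mathrm{hi}}$ to $H_x u = \nu u + e$ (and using that $\Pi_{\mathrm{hi}}$ commutes with $H_x$) yields $H_x \Pi_{\mathrm{hi}} u = \nu \Pi_{\mathrm{hi}} u + \Pi_{\mathrm{hi}} e$, so $bc\,|\Pi_{\mathrm{hi}} u| \le |H_x \Pi_{\mathrm{hi}} u| \le |\nu|\,|\Pi_{\mathrm{hi}} u| + |\Pi_{\mathrm{hi}} e|$, i.e.\ $(bc - |\nu|)|\Pi_{\mathrm{hi}} u| \le |\Pi_{\mathrm{hi}} e|$; applying $\Pi_{\mathrm{lo}}$ instead gives $(|\nu| - ac)|\Pi_{\mathrm{lo}} u| \le |\Pi_{\mathrm{lo}} e|$. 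Since $bc - |\nu| \ge \delta^\frac{1}{3}c$ and $|\nu| - ac \ge \delta^\frac{1}{3}c$, squaring these, adding, and using $|\Pi_{\mathrm{hi}} u|^2 + |\Pi_{\mathrm{lo}} u|^2 = |u|^2 = 1$ produces $\delta^\frac{2}{3}c^2 \le |\Pi_{\mathrm{hi}} e|^2 + |\Pi_{\mathrm{lo}} e|^2 = |e|^2 < \delta^\frac{2}{3}c^2$, a contradiction (the degenerate cases $\Pi_{\mathrm{hi}} u = 0$ or $\Pi_{\mathrm{lo}} u = 0$ are covered by the same two inequalities). Hence $H_y$ has no such eigenvalue, which says precisely that $B(y,r)$ has a local spectral gap on $(a + \delta^\frac{1}{3}, b - \delta^\frac{1}{3}]$.

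None of the steps is difficult; the only real obstacle is bookkeeping. One must choose the perturbed radii so that the spectral thresholds shift by exactly $\delta^\frac{1}{3}$ in the normalized units of $c = (K^2 r)^\frac{1}{3}$ — this is why the peculiar radius $(1 - s^{-1}\delta^\frac{1}{3})^3 r$ appears — and one must keep the estimate $|x-y| < K^{-\frac{1}{3}}(\delta r)^\frac{1}{3}$ strict (which it is, since $B(x,\delta r)$ is an open ball) so that the concluding inequalities are genuinely contradictory rather than merely tight.
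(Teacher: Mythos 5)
Your proof is correct, and the two halves of it sit at different distances from the paper's argument. For the local-rank assertion, your dimension-count-plus-contradiction (take $v \in V \cap W^\perp$ and derive $sc|v| < sc|v|$) is essentially the contrapositive phrasing of what the paper does: the paper shows that the composition $U^s_x L^{s'}_y$ of spectral projections has operator norm strictly less than $1$ (using the very same bounds $|U^s_x v| < s^{-1}(K^2 r)^{-1/3}|H_x v|$, $|H_y v| \le s'(K^2 r)^{1/3}|v|$, and $\|H_x - H_y\| \le K|x-y|$), whence by self-adjointness $U^{s'}_y$ is injective on $\mathrm{Im}\,U^s_x$; these are two faces of the same estimate, and the corollary for $\rank_s$ by specializing $\delta = s^3/(s+1)^3$ is identical. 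Where you genuinely diverge is the spectral-gap assertion. The paper gets it for free as a corollary of the rank estimate applied twice, once from $x$ to $y$ and once from $y$ to $x$, yielding the monotonicity chain $\locrank_{a+\delta^{1/3}} B(y,r) \le \locrank_a B(x,r) = \locrank_b B(x,r) \le \locrank_{b-\delta^{1/3}} B(y,r)$ and then invoking that $\locrank$ is nonincreasing in $s$. You instead give a direct, self-contained perturbation argument: decompose a putative offending eigenvector $u$ of $H_y$ via $\Pi_{\mathrm{hi}} + \Pi_{\mathrm{lo}} = I$ (which is where the gap hypothesis on $H_x$ enters), bound each component separately, and use the Pythagorean identity $|\Pi_{\mathrm{hi}} e|^2 + |\Pi_{\mathrm{lo}} e|^2 = |e|^2$ to reach a contradiction. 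Your version is more elementary and does not require having the rank statement in hand, and your observation that the strictness needed at the end comes from $B(x,\delta r)$ being an open ball is a genuine and correctly placed point of care; the paper's version is shorter given the machinery already built and makes the logical dependence (gap $\Rightarrow$ from rank) visible, which is thematically consistent with how gaps and ranks interact later in Proposition~\ref{decomp}. Both are valid.
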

\begin{proof}
Let $U^s_x$ be projection onto the space at $x$ described in the definition of $s$-local rank, and let $L^{s}_x = I - U^s_x$.  As before, it must be the case that $||U^s_x v|| < s^{-1} (K^2 r)^{-\frac{1}{3}} ||H_x v||$ for any nonzero vector $v$.  Let $v$ be in the image of $L^{s'}_y$ for some $y$ and some $s$.  The mean-value theorem dictates that $|H_x v - H_y v| \leq K |x-y| |v|$, and $|H_y v| \leq s' (K^2 r)^{-\frac{1}{3}} |v|$ by virtue of the fact that $v = L^{s'}_y v$.  Thus, if $|x-y| \leq K^{-\frac{1}{3}} (\delta r)^\frac{1}{3}$, then $|U^s_x L^{s'}_y v| < s^{-1}(s'+\delta^{\frac{1}{3}}) |v|$ when the right-hand side is nonzero.  Since the operators are self-adjoint, it must also be the case that $|L^{s'}_y v| < s^{-1}(s'+\delta^{\frac{1}{3}}) |v|$ for any nonzero $v$ in the image of $U^s_x$. Fix $s' = s - \delta^\frac{1}{3}$.   By the triangle inequality, for any such $v$, $|U^{s'}_y v| > |v| - s^{-1}(s'+\delta^{\frac{1}{3}}) |v| = 0$, meaning that the total dimension at $y$ of eigenvectors with eigenvalues greater than $(s-\delta^\frac{1}{3}) (K^2 r)^\frac{1}{3}$ is at least $k$.  But this is equivalent to the statement that $\locrank_s B(y,(1-s^{-1} \delta^\frac{1}{3})^3 r) \geq k$.

The second statement follows from the observation that when $\delta = (1 + s^{-1})^{-3}$, then $\delta = (1 - s^{-1} \delta^\frac{1}{3})^3$, so the appropriate $s$-local rank condition holds for every point in $B(x,\delta r)$.

Finally, by a double application of the above reasoning, $\locrank_s B(x,r) \leq \locrank_{s - \delta^\frac{1}{3}} B(y,r) \leq \locrank_{s - 2 \delta^\frac{1}{3}} B(x,r)$.  Therefore, if $\locrank_a B(x,r) = \locrank_b B(x,r)$, then $\locrank_{a + \delta^\frac{1}{3}} B(y,r) = \locrank_{b - \delta^\frac{1}{3}} B(y,r)$.
\end{proof}

The final proposition of this section establishes that, in some sense, the property of being high-rank is complementary to the property of having a large spectral gap.  The informal idea is that, if the ball $B(x,r)$ does not have a large spectral gap, then by decreasing its radius by an appropriate factor, it becomes higher rank.  By an induction argument on rank, proposition \ref{decomp} will create a finite decomposition of any compact subset of $\Omega$ into regions where there is always some known ball with an arbitrarily large spectral gap (and, in fact, many such balls); in fact, the number of such regions will be completely independent of the particular choice of $H_x$.
\begin{proposition}
Fix any constants $0 < a < b < c$.  Fix any ball $B(x,r)$ whose closure is contained in $\Omega$ and any $z$ in the closure of that ball, and let $G_z \subset B(x,r)$ be the set of points $y$ such that either $B(z,d(z,y))$ or $B(y,d(z,y))$ has a local spectral gap on $(a,b]$.  If $b/c \lless 1$, then $B(x,r)\setminus G_z$ is covered by boundedly many balls (depending on $n,a,b,$ and $c$) of $c$-rank strictly greater than $\rank_{c} B(x,r)$. \label{decomp}
\end{proposition}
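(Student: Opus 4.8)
The plan is to describe $B(x,r)\setminus G_z$ explicitly, break it into at most $n$ annular regions around $z$, and on each region use a single controlled shrinking of the radius to raise the $c$-rank. First I would unwind the gap condition: a ball $B(w,t)$ fails to have a local spectral gap on $(a,b]$ precisely when $H_w$ has an eigenvalue $\mu$ with $a(K^2t)^\frac{1}{3}<|\mu|\le b(K^2t)^\frac{1}{3}$; write $W(t)$ for this window. Then $y\in B(x,r)\setminus G_z$ exactly when, with $\rho:=d(z,y)$, both $H_z$ and $H_y$ have an eigenvalue in $W(\rho)$. Two observations follow. First, by the quasi-metric properties of the family \eqref{balls} established in Propositions \ref{normprop} and \ref{ballcomp}, every $y\in B(x,r)$ satisfies $d(z,y)\lesssim r$, so only scales $\rho\lesssim r$ occur. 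Second, for a fixed nonzero eigenvalue $\mu$ of $H_z$ the set of $t$ with $|\mu|\in W(t)$ is a single multiplicative interval $J_\mu=[\,(|\mu|/b)^3K^{-2},\,(|\mu|/a)^3K^{-2})$ of ratio $(b/a)^3$; since $H_z$ has at most $n$ eigenvalues, $B(x,r)\setminus G_z$ is the union of at most $n$ sets $R_\mu:=\{\,y\in B(x,r)\setminus G_z:d(z,y)\in J_\mu\,\}$.

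Fix such a $\mu$ and write $\rho_-:=\inf J_\mu$, $\rho_+:=(b/a)^3\rho_-$. Every $y\in R_\mu$ lies in $B(z,\rho_+)\cap B(x,r)$, hence in $B(z,\varrho)$ with $\varrho:=\min(\rho_+,C_0r)$, where $C_0$ is the dimensional constant for which $B(x,r)\subset B(z,C_0r)$. By the covering argument of property 4 of Proposition \ref{ballcomp}, $B(z,\varrho)$ is covered by boundedly many balls $B(y_i,\delta_1\varrho)$, where $\delta_1$ is a small constant to be chosen in terms of $a,b,c,n$; discard those missing $R_\mu$, and in each remaining one pick a point $\tilde y_i\in R_\mu$. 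By property 3 of Proposition \ref{ballcomp}, $B(y_i,\delta_1\varrho)\subset B(\tilde y_i,s)$ for some $s\approx\delta_1\varrho\le\delta_1\rho_+$, so $R_\mu$ is covered by boundedly many balls $B(\tilde y_i,s)$ centered inside $R_\mu$ itself.

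I then claim each such $B(\tilde y,s)$, $\tilde y\in R_\mu$, has $c$-rank greater than $k:=\rank_c B(x,r)$. Since $\tilde y\in R_\mu$, $H_{\tilde y}$ has an eigenvalue $\mu'$ with $a(K^2\rho')^\frac{1}{3}<|\mu'|\le b(K^2\rho')^\frac{1}{3}$ where $\rho':=d(z,\tilde y)\in J_\mu$, and $\rho'\lesssim r$. Put $r^+:=\tfrac{1}{2}(a/c)^3\rho'$. Then $c(K^2r^+)^\frac{1}{3}=2^{-1/3}a(K^2\rho')^\frac{1}{3}<|\mu'|$, so $\mu'$ is counted in $\locrank_c B(\tilde y,r^+)$; on the other hand $|\mu'|\le b(K^2\rho')^\frac{1}{3}\le c(K^2r)^\frac{1}{3}$ — immediate from $b<c$ when $\rho'\le r$, and a consequence of $\rho'\lesssim r$ and $b/c\lless 1$ otherwise — so $\mu'$ is not counted in $\locrank_c B(\tilde y,r)$. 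Since $a/c<b/c\lless 1$ also forces $r^+<r$, monotonicity of $\locrank_c B(\tilde y,\cdot)$ in the radius and $\tilde y\in B(x,r)$ give
\[ \locrank_c B(\tilde y,r^+)\ \ge\ \locrank_c B(\tilde y,r)+1\ \ge\ k+1 . \]
Now choose $\delta_1$ small enough, in terms of $a,b,c,n$, that $s/r^+\lless 1$; applying Proposition \ref{gap} with parameter $c$, center $\tilde y$, radius $r^+$, and displacement $\delta:=s/r^+$ then yields, for every $w\in B(\tilde y,s)\cap\Omega$,
\[ \locrank_c B(w,s)\ \ge\ \locrank_c B\!\left(w,(1-c^{-1}\delta^\frac{1}{3})^3r^+\right)\ \ge\ \locrank_c B(\tilde y,r^+)\ \ge\ k+1 , \]
because $B(\tilde y,s)=B(\tilde y,\delta r^+)$ and $(1-c^{-1}\delta^\frac{1}{3})^3r^+\ge s$ once $\delta$ is small. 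Hence $\rank_c B(\tilde y,s)\ge k+1$.

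Summing over the at most $n$ eigenvalues of $H_z$ and over the boundedly many balls per annulus produces a cover of $B(x,r)\setminus G_z$ by boundedly many balls — the count depending only on $n,a,b,c$ — each of $c$-rank strictly larger than $\rank_c B(x,r)$. I expect the real work to be the scale bookkeeping of the preceding paragraph: one must pin down $\delta_1$ (controlling the sizes of the covering balls) and $\delta$ (controlling the radius lost in Proposition \ref{gap}) so that the radius $r^+$ at which $\mu'$ enters $\locrank_c$ sits comfortably above both the covering scale $s$ and the loss $(1-c^{-1}\delta^\frac{1}{3})^3r^+$, and one must carefully dispatch the borderline case $d(z,\tilde y)>r$. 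The hypothesis $b/c\lless 1$ is exactly what makes these choices simultaneously possible: it keeps $\mu'$ out of $\locrank_c$ at the reference radius $r$, forces $r^+<r$, and keeps the covering ratios bounded by a function of $a,b,c,n$ alone.
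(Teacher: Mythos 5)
Your proof is correct, and it takes a somewhat different (and arguably cleaner) route than the paper's. The paper decomposes $(0,r]$ into dyadic intervals $I_j$, identifies the boundedly many ``exceptional'' scales at which $B(z,\cdot)$ fails the $(a,b]$-gap, covers each exceptional annulus by small balls, and then runs a case analysis on whether the center $w$ of each covering ball has $B(w,d(z,w))$ with or without a gap -- in the latter case extracting the rank jump, in the former invoking Proposition~\ref{gap} to propagate a (slightly narrowed) gap to nearby points. You instead parametrize the failure set directly by the eigenvalues of $H_z$: the observation that $y\notin G_z$ forces both $H_z$ and $H_y$ to have an eigenvalue in the window $W(d(z,y))$ gives you at most $n$ multiplicative annuli $J_\mu$ of fixed ratio $(b/a)^3$, replacing the paper's ``boundedly many exceptional dyadic scales'' step. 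More importantly, by centering your covering balls at points $\tilde y$ that you force to lie in $R_\mu$ (so that $H_{\tilde y}$ itself carries an eigenvalue in the window), you obtain the rank increment at $\tilde y$ directly -- the threshold drops past $|\mu'|$ at radius $r^+=\tfrac12(a/c)^3\rho'$ while $|\mu'|$ is still below the cutoff at radius $r$ -- and then Proposition~\ref{gap} transfers the improved $\locrank_c$ to all of $B(\tilde y,s)$. This bypasses the paper's dichotomy altogether and, incidentally, avoids the paper's slight mismatch in which the ``good'' case of the dichotomy only produces a gap on $(2a,b/2]$ rather than on $(a,b]$; your argument never needs to re-establish membership in $G_z$ at all. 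The scale bookkeeping you flag as the real work does check out: $b/c\lless 1$ (dimensional) guarantees both $\rho'\le(c/b)^3 r$ despite $d(z,\tilde y)\lesssim r$ with a dimensional constant, and $r^+<r$; and $\delta_1$ chosen small in terms of $a,b,c,n$ keeps $s/r^+$ small enough for Proposition~\ref{gap}. Both proofs lean on the same supporting machinery (Propositions~\ref{normprop}, \ref{ballcomp}, \ref{gap}); what your version buys is a tighter identification of the bad set and a single-case rank argument.
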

\begin{proof}  
For each integer $j$, let $I_j := [2^{-j} r, 2^{-j-1} r]$ (and neglect all negative $j$'s such that $B(x,r) \subset B(z,2^{-j}r)$).
Since the dimension is finite, there are only boundedly many $j$'s for which $I_j$ contains an $r'$ such that $B(z,r')$ fails to have a local spectral gap on $(a,b]$.  Let these exceptional scales be labeled $I_{j_1},\ldots,I_{j_N}$.

For a given $I_{j_k}$, cover $B(z,2^{-j_k} r) \setminus B(z,2^{-j_k-1} r)$ by boundedly many balls of radius $2^{-j_k} \delta r$ where $\delta$ is a fixed constant to be chosen suitably small.  Consider any such ball $B(w,\delta 2^{-j_k} r)$.  %If $\delta$ is chosen sufficiently small (again, depending only on dimension), then there exists a constant $c_n$ such that for any $y \in B(w, \delta 2^{-j_k r})$, $c_n^{-1} 2^{-j_k} r \leq d(z,y) \leq c_n 2^{-j_k} r$.

Now suppose that $B(w,d(z,w))$ has a local spectral gap on $(a,b]$.  Then for all $y \in B(w,\delta 2^{-j_k} r)$, the ball $B(y,d(z,w))$ has a local spectral gap on $(a+\delta^\frac{1}{3},b-\delta^\frac{1}{3}]$.  Since $d(z,w)$ is within a factor of $2$ of $d(z,y)$, it must be the case that the ball $B(y,d(z,y))$ also has a spectral gap on $(2a,b/2]$ if $\delta$ is suitably small in terms of $a$ and $b$.

Suppose instead that $B(w,d(z,w))$ does not have a local spectral gap on $(a,b]$.  This means $\locrank_{a} B(w,d(z,w)) > \locrank_{b} B(w,d(z,w))$. Because $\delta$ was chosen suitably small, $\rank_a B(w,\delta 2^{-j_k} r) > \locrank_{b} B(w,d(z,w))$.  But for any $x \in B(w,\delta 2^{-j_k} r)$, it must be the case that $\locrank_{b'} B(x, \delta (a/b')^3 2^{-j_k} r) > \locrank_b B(w,d(z,w))$ for any $b' > b$.  Thus $B(w,\delta 2^{-j_k} r)$ may be covered by boundedly many balls (depending on $n$, $a$, and $b'$) of radius $\delta (a/b')^3 2^{-j_k} r$ on which the $b'$-rank is strictly greater than $\locrank_b B(w,d(z,w))$ (which is equal to $ \locrank_{b'} B(w,(b/b')^3 d(z,w)))$.  This quantity is at least equal to $\rank_{b'} B(x,r)$, if $b/b' \lless 1$ (to account for the fact that $d(z,w) \lesssim r$).
\end{proof}

\section{Integration-by-parts construction}
\label{ibpsec}
To begin the section, a few definitions are in order.  First, define the $C^k(B(y,r))$-norm of a function $f$ to be the supremum on $B(y,r)$ of $(v \cdot \nabla)^l f(x)$ where $l$ ranges from $0$ to $k$ and $v$ ranges over all vectors satisfying $N_y[v,r] \leq 1$.  %The notation $||f||^{(N)}_{\Phi,y}$ will be used as shorthand to refer to the $C^{N}$-norm of $f$ on the ball $B(y,r)$ where $r = N_y^*[\nabla \Phi(y)]$.
Next, suppose that the amplitude $\psi$ is compactly supported in $\Omega$.
Fix $0 < R_{max}$ to be smaller than the nonisotropic distance from the support of $\psi$ to $\partial \Omega$ (i.e.,  $B(x,R_{max}) \subset \Omega$ for all $x$ in the support of $\psi$; note that $R_{max}$ may be chosen to be any positive number less than $K$ times the third power of the Euclidean distance).

The purpose of this section is to establish the following result:  for any positive integer $N$, there exists a constant $C$ depending on $N$, $n$, and the $C^{N+1}$-norm of $\Phi$ on $\Omega$ such that
\begin{equation}  \left| \int e^{i \lambda \Phi(x)} \psi(x) dx \right| \leq C \left[\int_{\Omega} \frac{||\psi||_{C^N(B(y,N_y^*[\nabla \Phi(y)]))}  dy}{1 + (\lambda N_y^*[\nabla \Phi(y)])^N} + \frac{||\psi||_{C^N(\Omega)} |\Omega| }{(\lambda R_{max})^N} \right]. 
\label{sublevel}
\end{equation}
 %In fact, by the methods below, this inequality can even be improved upon if, for example, $\psi$ is known to vanish to some prescribed order at some set of points.  As the main interest of this paper is to prove uniform estimates, this sort of improvement will not be undertaken (as it would be in general unstable).
%For convenience, let
%\[ \rho(y) := \inf \set{r \geq 0}{ \sum_{\mu} \frac{|E_y^\mu (\nabla \Phi(y))|}{(|\mu| r)^\frac{1}{2} + (K r^2)^\frac{1}{3} } \leq 2n }. \]
%Observe that by \eqref{scaling2}, $\rho(y) \approx N_y^*[\nabla \Phi(y)]$.  Thus one is free to prove \eqref{sublevel} with $\rho(y)$ replacing $N_y^*[\nabla \Phi(y)]$.  Such a replacement will simplify the computations somewhat.
The main idea behind \eqref{sublevel} is, of course, an integration-by-parts procedure.  The goal will be to carry out the procedure on the largest possible region on which $\nabla \Phi$ is essentially constant.  In what follows, the balls $B(y,r)$ will serve as a suitable approximation to such a region; at the point $y$, the gradient of $\Phi$ is, for all intents and purposes, essentially constant on the ball $B(y,\rho(y))$.  To make these ideas precise, it is first necessary to establish a simple inequality analogous to Taylor's theorem to allow one to estimate how $\nabla \Phi(y)$ varies on the balls \eqref{balls}.  With that information in place, one can proceed to perform the integration-by-parts:
\begin{proposition}
Let $\mu$ be any real number.  Then
\begin{equation}
\left| E^\mu_y (\nabla \Phi(x)) - E^\mu_y (\nabla \Phi(y)) \right| \leq |\mu| |E^\mu_y (x-y)| + \frac{1}{2} K |x-y|^2. \label{hessianspect}
\end{equation}
Moreover, if $v_j$ is any unit eigenvector of $H_y$ with eigenvalue $\mu$, then
\begin{equation}
 \left| v \cdot \nabla \Phi(x) - v \cdot \nabla \Phi(y) - \mu v \cdot (x-y) \right| \leq \frac{1}{2} K | x - y|^2. \label{hessian}
\end{equation}
\end{proposition}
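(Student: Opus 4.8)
The plan is to run the fundamental theorem of calculus along the segment joining $y$ to $x$ and to feed in the boundedness condition, which forces the Hessian to be $K$-Lipschitz. Writing $H_z$ for the Hessian of $\Phi$ at $z$ and parametrizing the segment by $\gamma(t) := y + t(x-y)$, $t \in [0,1]$ (this stays in $\Omega$ by convexity), one has $\nabla\Phi(x) - \nabla\Phi(y) = \int_0^1 H_{\gamma(t)}(x-y)\,dt$. Throughout I will use that $\|H_z - H_w\| \le K|z-w|$: this is the mean value theorem applied to $t \mapsto w_1 \cdot H_{w + t(z-w)} w_2$ for unit vectors $w_1,w_2$, whose derivative is bounded by $K|z-w|$ thanks to the bound on all third directional derivatives.

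For \eqref{hessianspect}, I would apply $E^\mu_y$ to the integral identity and split $H_{\gamma(t)} = H_y + (H_{\gamma(t)} - H_y)$. Since $E^\mu_y$ commutes with $H_y$ and $H_y E^\mu_y = \mu E^\mu_y$, the first piece contributes exactly $\mu E^\mu_y(x-y)$, so that $E^\mu_y\nabla\Phi(x) - E^\mu_y\nabla\Phi(y) - \mu E^\mu_y(x-y) = \int_0^1 E^\mu_y(H_{\gamma(t)} - H_y)(x-y)\,dt$. The integrand has norm at most $\|H_{\gamma(t)} - H_y\|\,|x-y| \le tK|x-y|^2$, and $\int_0^1 t\,dt = \tfrac12$, which is precisely the claimed bound.

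For \eqref{hessian}, the cleanest route is to observe that a unit eigenvector $v$ of $H_y$ with eigenvalue $\mu$ satisfies $E^\mu_y v = v$, so by self-adjointness $v \cdot u = v \cdot (E^\mu_y u)$ for every $u$; hence the left side of \eqref{hessian} equals $|v \cdot (E^\mu_y\nabla\Phi(x) - E^\mu_y\nabla\Phi(y) - \mu E^\mu_y(x-y))|$, and Cauchy-Schwartz together with the bound just established (using $|v|=1$) finishes it. Equivalently, one can repeat the segment computation directly, using $v \cdot H_y(x-y) = \mu\, v\cdot(x-y)$ and $|v \cdot (H_{\gamma(t)} - H_y)(x-y)| \le tK|x-y|^2$.

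There is no real obstacle here: the statement is a quantitative first-order Taylor expansion of $\nabla\Phi$ organized by the spectral decomposition of $H_y$, and the whole proof is a couple of lines once the Lipschitz bound on $H_\bullet$ is in hand. The only place that deserves a moment's attention is extracting that operator-norm Lipschitz bound from the pointwise hypothesis on directional third derivatives, and even that is immediate from the mean value theorem.
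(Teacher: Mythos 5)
Your proof is correct and takes essentially the same route as the paper: parametrize the segment from $y$ to $x$, apply the fundamental theorem of calculus (the paper phrases it as second-order Taylor with integral remainder for $t\mapsto v\cdot\nabla\Phi(\gamma(t))$, you as first-order FTC for $\nabla\Phi$ plus the $K$-Lipschitz bound on $H$), and extract the $\tfrac12$ from the resulting $t$-integral. A small bonus of your organization is that you prove the stronger vector-valued intermediate estimate $\bigl|E^\mu_y\nabla\Phi(x)-E^\mu_y\nabla\Phi(y)-\mu E^\mu_y(x-y)\bigr|\le\tfrac12 K|x-y|^2$, from which both \eqref{hessianspect} and \eqref{hessian} drop out by one triangle inequality or Cauchy--Schwarz, whereas the paper obtains \eqref{hessianspect} by the dual device of optimizing over unit $v$ in the $\mu$-eigenspace.
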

\begin{proof}
Begin with the following formula:  given any twice-differentiable function $f$ defined on $[0,1]$,
\begin{equation}
f(1) = f(0) + f'(0) + \int_0^1 (1-t) f''(t) dt. \label{taylor}
\end{equation}
This formula is just the fundamental theorem of calculus after an integration-by-parts.  Now, fix any unit vector $v$ and any points $x,y \in \Omega$.  Since $\Omega$ is convex, the function $f(t) = v \cdot \nabla \Phi( t x + (1-t) y)$ is defined on $[0,1]$ and twice-differentiable when $\Phi$ has continuous derivatives through the third order.  Differentiation gives that $f'(0) = v \cdot H_y (x-y) $, where $H_y$ is the Hessian of $\Phi$ at $y$; moreover, $|f''(t)| \leq K |x-y|^2$, where $K$ is the constant described in the proposition.  Therefore, by \eqref{taylor},
\[ |v \cdot \nabla \Phi(x) - v \cdot \nabla \Phi(y) - v \cdot H_y (x-y)| \leq \frac{1}{2} K |x-y|^2. \]
Now \eqref{hessian} follows trivially from this inequality since in this case $v \cdot H_y (x-y) = \mu v \cdot (x-y)$.  Moreover, if one instead takes $v = E^\mu_y (\nabla \Phi(x) - \nabla \Phi(y))$ (appropriately normalized), then \eqref{hessianspect} follows by observing that $|v \cdot H_y (x-y)| \leq |\mu||E^\mu_y (x-y)|$ and applying the triangle inequality. 
\end{proof}

\begin{proposition}
When $d \lless N_y^*[\nabla \Phi(y)]$, $N_x^*[\nabla \Phi(x)] \approx N_y^*[\nabla \Phi(y)]$ for all $x \in B(y,d)$. \label{comp}
\end{proposition}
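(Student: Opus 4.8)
The plan is to split $\nabla\Phi(x)=\nabla\Phi(y)+\bigl(\nabla\Phi(x)-\nabla\Phi(y)\bigr)$, estimate the increment $\nabla\Phi(x)-\nabla\Phi(y)$ in the norm $N_y^*$, use the triangle inequality \eqref{dualtriangle} together with the hypothesis $d\lless N_y^*[\nabla\Phi(y)]$ to conclude $N_y^*[\nabla\Phi(x)]\approx N_y^*[\nabla\Phi(y)]$, and then change the base point from $y$ to $x$ using the change-of-base-point inequality \eqref{dualineq}, applied at the scale $N_y^*[\nabla\Phi(y)]$.

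First I would bound the increment $e:=\nabla\Phi(x)-\nabla\Phi(y)$. For each eigenvalue $\mu$ of $H_y$, \eqref{hessianspect} gives $|E_y^\mu e|\le|\mu|\,|E_y^\mu(x-y)|+\tfrac12 K|x-y|^2$; moreover property~1 of proposition~\ref{ballcomp} gives $|x-y|\le K^{-1/3}d^{1/3}$, so that $\tfrac12 K|x-y|^2\le\tfrac12(Kd^2)^{1/3}$, while $x\in B(y,d)$ means $\sum_\mu|E_y^\mu(x-y)|^2\bigl((|\mu|d)^{1/2}+(Kd^2)^{1/3}\bigr)^2<d^2$. Substituting these into the definition of $N_y^*[e,Cd]$, applying Minkowski's inequality, and using the elementary pointwise bounds $|\mu|\bigl/\bigl((|\mu|Cd)^{1/2}+(K(Cd)^2)^{1/3}\bigr)\le C^{-1/2}\bigl((|\mu|d)^{1/2}+(Kd^2)^{1/3}\bigr)/d$ and $(Kd^2)^{1/3}\bigl/\bigl((|\mu|Cd)^{1/2}+(K(Cd)^2)^{1/3}\bigr)\le C^{-2/3}$ (there being at most $n$ eigenvalues), one finds $N_y^*[e,Cd]\le C^{-1/2}+\tfrac12\sqrt n\,C^{-2/3}<1$ once $C$ is a large enough constant depending only on $n$; hence $N_y^*[e]\lesssim d$. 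Since $N_y^*[\cdot,r]$ is even in its vector argument, applying \eqref{dualtriangle} to $\nabla\Phi(x)=\nabla\Phi(y)+e$ and to $\nabla\Phi(y)=\nabla\Phi(x)-e$ and using the hypothesis $d\lless N_y^*[\nabla\Phi(y)]=:\rho$ then yields $N_y^*[\nabla\Phi(x)]\approx\rho$.

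Next I would transfer the base point. Because $|x-y|\le K^{-1/3}d^{1/3}$ and $d$ is much smaller than $\rho$, the points $x$ and $y$ both lie in a single Euclidean ball of radius $K^{-1/3}\rho^{1/3}$, so \eqref{dualineq} applies, in either direction, with its scale parameter and its argument $r$ both taken to be a fixed multiple of $\rho$; this shows that $N_x^*[\nabla\Phi(x),c_1\rho]$ and $N_y^*[\nabla\Phi(x),c_1\rho]$ are each bounded by a dimensional constant times the other. Feeding in the two inequalities packaged in $N_y^*[\nabla\Phi(x)]\approx\rho$ (namely $N_y^*[\nabla\Phi(x),c_1\rho]<1$ for one choice of $c_1$ and $\ge 1$ for a smaller choice) and then invoking the power-law scaling \eqref{scaling2} in the radius to convert ``bounded by a constant'' into ``bounded by $1$'' at a slightly altered scale $c_2\rho$, respectively $c_2^{-1}\rho$, for a constant $c_2=c_2(n)$, one obtains $N_x^*[\nabla\Phi(x)]\approx\rho=N_y^*[\nabla\Phi(y)]$, which is the assertion.

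The step I expect to be the main obstacle is this base-point change: \eqref{dualineq} is not scale invariant — it relates the norm at one center at scale $r$ to the norm at the other center at the scale $d^{1/4}r^{3/4}$ — so it is essential to invoke it at the correct scale (here $\approx\rho$, which is legitimate precisely because the Euclidean separation $|x-y|$ is far smaller than $K^{-1/3}\rho^{1/3}$) and then to absorb the resulting loss of constants using the scaling behaviour of $N^*$ in its radius argument. The increment estimate, by contrast, is a routine consequence of \eqref{hessianspect} and the definition of the balls.
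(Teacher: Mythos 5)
Your proposal is correct and follows essentially the same three-step route as the paper: bound the increment $\nabla\Phi(x)-\nabla\Phi(y)$ in the $N_y^*$-norm by a constant times $d$, apply the dual triangle inequality \eqref{dualtriangle} together with $d\lless N_y^*[\nabla\Phi(y)]$ to get $N_y^*[\nabla\Phi(x),\rho]\approx 1$, then transfer base point via \eqref{dualineq} at scale $\approx\rho$ and conclude by the scaling law \eqref{scaling2}. The only (minor) technical difference is in the increment estimate: the paper splits $\nabla\Phi(x)-\nabla\Phi(y)$ as $H_y(x-y)$ plus a Taylor remainder, bounding the Hessian piece through \eqref{csineq} and \eqref{csineq2}, whereas you substitute the eigenspace bound \eqref{hessianspect} directly into the definition of $N_y^*[\,\cdot\,,Cd]$; both yield $N_y^*[\nabla\Phi(x)-\nabla\Phi(y)]\lesssim d$ with dimensional constants, and you correctly flag that the base-point change must be performed at the large scale $\rho$ (not $d$) for \eqref{dualineq} to close up.
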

\begin{proof}
By the previous proposition, given $x$ and $y$ in $\Omega$ within Euclidean distance $K^{-\frac{1}{3}} d^\frac{1}{3}$, 
$|\nabla \Phi(x) - \nabla \Phi(y) - H_y (x-y)| \leq \frac{1}{2} (K d^2)^\frac{1}{3}$.  By definition of $N_y^*$, it follows that $N_y^*[\nabla \Phi(x) - \nabla \Phi(y) - H_y (x-y)] \lesssim d$.  Moreover, by \eqref{csineq}, the quantity $N_y^*[H_y(x-y)]$ must equal $\inf \set{r > 0}{|w \cdot H_y(x-y)| < r \ \forall w \mbox{ s.t. } N_y[w] \leq r}$.  Combined with \eqref{csineq2}, it follows that $N_y^*[H_y (x-y)] \lesssim d$ as well.  Therefore the triangle inequality \eqref{dualtriangle} implies that
\[ \left|(N_y^*[\nabla \Phi(y)])^\frac{1}{2} - (N_y^*[\nabla \Phi(x)])^\frac{1}{2} \right| %\leq (N_y^*[H_y(x-y)])^\frac{1}{2} + (N_y^*[\nabla \Phi(x) - \nabla \Phi(y) - H_y (x-y)])^\frac{1}{2} 
\lesssim d^\frac{1}{2}.\]
Therefore if $d \lless N_y^*[\nabla \Phi(y)]$ (let $r := N_y^*[\nabla \Phi(y)]$), $N_y^*[\nabla \Phi(x),r] \approx 1$.  But then applying \eqref{dualineq} in both directions implies that $N_y^*[\nabla \Phi(x),r] \approx N_x^*[\nabla \Phi(x),r]$, which means that $N_y^*[\nabla \Phi(y)] \approx N_x^*[\nabla \Phi(x)]$.
\end{proof}

\begin{proposition}
Suppose $\psi$ is a $C^\infty$ amplitude supported on $B(y,d)$ for some $d \lless N_y^*[\nabla \Phi(y)]$ and $d \leq R_{max}$.  \label{ibplemma} Then for any positive integer $N$, there is a constant $C$ depending on $N$, $n$, $R_{max}$, and the $C^{N+1}$-norm of $\Phi$ on $\Omega$,  for which
\[ \left| \int e^{i \lambda \Phi(x)} \psi(x) dx \right| \leq C  ||\psi||_{C^N(B(y,d))} |B(y,d)| (\lambda d)^{-N}. \]
%where $A_{N, \psi}$ is a supremum over finitely many derivatives of $\psi$, given below.
\end{proposition}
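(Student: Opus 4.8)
The plan is to linearly rescale the ellipsoid $B(y,d)$ onto the Euclidean unit ball and then run the classical non-stationary phase argument there. Let $v_1,\dots,v_n$ be an orthonormal eigenbasis of $H_y$ with $H_y v_j = \mu_j v_j$, and set $a_j := d\,\bigl((|\mu_j|d)^{\frac12}+(Kd^2)^{\frac13}\bigr)^{-1}$. By the computation behind property 4 of Proposition \ref{ballcomp}, the affine map $T(u) := y + \sum_j a_j u_j v_j$ sends the open Euclidean unit ball $\{|u|<1\}$ exactly onto $B(y,d)$; moreover $|\det T| = \omega_n^{-1}|B(y,d)|$ (with $\omega_n$ the volume of the unit ball) and each $a_j \le K^{-\frac13}d^{\frac13}$, consistent with property 1 of Proposition \ref{ballcomp}. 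Changing variables, $\int e^{i\lambda\Phi}\psi = |\det T|\int_{|u|<1} e^{i\lambda\tilde\Phi(u)}\tilde\psi(u)\,du$ with $\tilde\Phi := \Phi\circ T$ and $\tilde\psi := \psi\circ T$. The point of the weights $a_j$ is that $N_y[a_j v_j,d] = 1$, i.e., the columns of $DT$ are nonisotropic unit vectors; hence $\partial_{u_{j_1}}\!\cdots\partial_{u_{j_m}}\tilde\psi = \bigl[(a_{j_1}v_{j_1}\cdot\nabla)\cdots(a_{j_m}v_{j_m}\cdot\nabla)\psi\bigr]\circ T$, and polarizing these mixed products into pure powers shows $\|\tilde\psi\|_{C^N(\{|u|<1\})} \lesssim_{n,N} \|\psi\|_{C^N(B(y,d))}$.

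Next I would extract the two estimates on $\tilde\Phi$ that drive the bound. The key one is a uniform lower bound $|\nabla_u\tilde\Phi(u)| \gtrsim d$ for $|u|<1$. Since $\partial_{u_j}\tilde\Phi(u) = a_j\,(v_j\cdot\nabla\Phi(x))$ with $x = T(u)$, one has $|\nabla_u\tilde\Phi(u)| = d\, N_y^*[\nabla\Phi(x),d]$, so it suffices to show $N_y^*[\nabla\Phi(x),d] \gtrsim 1$. Exactly as in the proof of Proposition \ref{comp} --- using \eqref{hessian} applied to the $v_j$ together with \eqref{csineq} and \eqref{csineq2} --- one gets $N_y^*[\nabla\Phi(x) - \nabla\Phi(y)] \lesssim d$; combined with the hypothesis $d \lless N_y^*[\nabla\Phi(y)]$ and the triangle inequality \eqref{dualtriangle} this forces $N_y^*[\nabla\Phi(x)] \gtrsim N_y^*[\nabla\Phi(y)] \gg d$, and then the scaling \eqref{scaling2} gives $N_y^*[\nabla\Phi(x),d] \gtrsim \bigl(N_y^*[\nabla\Phi(x)]/d\bigr)^{\frac13} \gtrsim 1$. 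The second estimate is an upper bound on the higher derivatives: for $2 \le m \le N+1$,
\[ \partial_{u_{j_1}}\!\cdots\partial_{u_{j_m}}\tilde\Phi(u) = a_{j_1}\!\cdots a_{j_m}\,(v_{j_1}\!\cdot\nabla)\cdots(v_{j_m}\!\cdot\nabla)\Phi(x), \]
which has size $\lesssim d$ up to a constant depending on $n$, $N$, $R_{max}$, and $\|\Phi\|_{C^{N+1}}$: for $m\ge3$ because $\prod_i a_{j_i}\le K^{-m/3}d^{m/3}$, $d^{m/3}\le R_{max}^{(m-3)/3}d$ (as $d\le R_{max}$), and the $m$-th derivatives of $\Phi$ are bounded; for $m=2$ because $(v_{j_1}\cdot\nabla)(v_{j_2}\cdot\nabla)\Phi(x) = \mu_{j_1}\delta_{j_1 j_2} + v_{j_1}\cdot(H_x-H_y)v_{j_2}$ with $\|H_x-H_y\| \le (K^2 d)^{\frac13}$ by the mean value theorem, so $a_{j_1}a_{j_2}\,|(v_{j_1}\cdot\nabla)(v_{j_2}\cdot\nabla)\Phi(x)| \le a_{j_1}^2|\mu_{j_1}| + a_{j_1}a_{j_2}(K^2 d)^{\frac13} \le 2d$.

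With these in hand I would finish with the usual operator $L := (i\lambda|\nabla_u\tilde\Phi|^2)^{-1}\,\nabla_u\tilde\Phi\cdot\nabla_u$, which satisfies $L e^{i\lambda\tilde\Phi} = e^{i\lambda\tilde\Phi}$, so $\int_{|u|<1} e^{i\lambda\tilde\Phi}\tilde\psi\,du = \int_{|u|<1} e^{i\lambda\tilde\Phi}\,(L^t)^N\tilde\psi\,du$ (no boundary term, since $\tilde\psi$ is supported in $\{|u|<1\}$). An easy induction --- bookkeeping that every application of $L^t$ contributes a net factor $(\lambda d)^{-1}$ and at most one extra derivative of $\tilde\psi$, given that $|\nabla_u\tilde\Phi|\approx d$ dominates the denominator while every derivative of $\tilde\Phi$ of order $1$ through $N+1$ is $\lesssim d$ --- shows $|(L^t)^N\tilde\psi| \lesssim (\lambda d)^{-N}\,\|\tilde\psi\|_{C^N(\{|u|<1\})}$ with a constant depending only on $n$, $N$, $R_{max}$, and $\|\Phi\|_{C^{N+1}}$. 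Multiplying by $|\det T| \approx |B(y,d)|$ and invoking the $C^N$-comparison from the first paragraph yields the claim.

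The step I expect to be the main obstacle is the uniform gradient lower bound $|\nabla_u\tilde\Phi|\gtrsim d$ over the whole unit ball: the rescaling weights $a_j$ are built entirely from the spectrum of $H_y$, whereas the intrinsic size of $\nabla\Phi(x)$ at a nearby point is measured by $N_x^*$, so one must transport the lower bound from $x$ back to $y$. This is exactly where the spectral perturbation estimate \eqref{spectrumpert2} (and the norm comparison \eqref{dualineq} it yields, valid because $B(y,d)$ lies in a Euclidean ball of radius $K^{-\frac13}d^{\frac13}$) enters, through Proposition \ref{comp} and the Taylor-type estimate \eqref{hessian}. Everything else is routine bookkeeping.
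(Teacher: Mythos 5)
Your argument is correct and follows essentially the same line as the paper: the two ingredients that drive it --- the lower bound $N_y^*[\nabla\Phi(x),d]\gtrsim 1$ on $B(y,d)$ (obtained from Proposition \ref{comp} together with the hypothesis $d\lless N_y^*[\nabla\Phi(y)]$) and the $\lesssim d$ bound on the nonisotropic second and higher derivatives of $\Phi$ --- are the same, followed by $N$-fold integration by parts. The two packaging differences are that you rescale the ellipsoid $B(y,d)$ onto the Euclidean unit ball first (the paper works directly in $x$-coordinates), and that you integrate by parts along the variable direction $\nabla_u\tilde\Phi/|\nabla_u\tilde\Phi|^2$, whereas the paper picks a single \emph{fixed} vector $v$ with $N_y[v,d]\le 1$ realizing $v\cdot\nabla\Phi(y)=d\,N_y^*[\nabla\Phi(y),d]$ and uses $L=-i\,(v\cdot\nabla)/(v\cdot\nabla\Phi)$; the constant direction makes the Leibniz bookkeeping a bit lighter.

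One small imprecision worth fixing: you write ``$|\nabla_u\tilde\Phi|\approx d$'' and include order-$1$ derivatives of $\tilde\Phi$ among those that are ``$\lesssim d$,'' but only the \emph{lower} bound $|\nabla_u\tilde\Phi|\gtrsim d$ holds. By your own identification $|\nabla_u\tilde\Phi|=d\,N_y^*[\nabla\Phi(x),d]$, and $N_y^*[\nabla\Phi(x),d]\gtrsim (N_y^*[\nabla\Phi(y)]/d)^{1/3}$ can be arbitrarily large. This does not break the argument: $\nabla_u\tilde\Phi$ only ever enters through the combination $\nabla_u\tilde\Phi/|\nabla_u\tilde\Phi|^2$, and since $w\mapsto w/|w|^2$ is homogeneous of degree $-1$, every extra factor of $\nabla_u\tilde\Phi$ produced by the Leibniz rule is matched by a corresponding power of $|\nabla_u\tilde\Phi|^{-2}$; so the size and all derivatives of the vector field are controlled by the lower bound on $|\nabla_u\tilde\Phi|$ and the $\lesssim d$ bounds on derivatives of order $\ge 2$, which is all the induction actually uses. (The paper's single-direction $L$ sidesteps this point entirely, since $v\cdot\nabla\Phi$ appears only in the denominator.) You should rephrase the final bookkeeping sentence accordingly.
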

\begin{proof}
By the reasoning of the previous proposition, $N_y^*[\nabla \Phi(x)-\nabla \Phi(y)] \lesssim d$ when $x \in B(y,d)$, which means that $|v \cdot (\nabla \Phi(x) - \nabla \Phi(y))| \lesssim d$ for any vector $v$ satisfying $N_y[v,d] \leq 1$ (by \eqref{scaling2} and \eqref{csineq}). Moreover, by \eqref{csineq}, there exists a vector $v$ satisfying $N_y[v,d] \leq 1$ such that $v \cdot \nabla \Phi(y) = d N_y^*[\nabla \Phi(y),d]$.  Thus \eqref{scaling2} implies that when $d \lless N_y^*[\nabla \Phi(y)]$, $v \cdot \nabla \Phi(x) \gtrsim d$ for all $x \in B(y,d)$.
%As in the previous proposition, there exists a pair of  vectors $(v,w)$ such that $\rho((v,w),q) = \rho^*(q) = v \cdot \nabla_1 \Phi (q) + w \cdot \nabla_2 \Phi(q)$.  
Now consider the following differential operator:
\[L(f)(x) := -i \frac{v \cdot \nabla f(x)}{v \cdot \nabla \Phi(x)}.\]
The denominator is never zero, so $L$ is well-defined and smooth throughout $B(y,d)$.
Furthermore, $L(e^{i \lambda \Phi}) = \lambda \Phi$ on $B(y,d)$.   If one takes $L^t$ to be the transpose of $L$, integration-by-parts guarantees that, for all nonnegative integers $N$, 
\begin{equation}
\int e^{i \lambda \Phi(x)} \psi(x) dx = \lambda^{-N} \int e^{i \lambda \Phi(x)} \left(L^t\right)^N \psi(x) dx. \label{transpose}
\end{equation}
The key estimate needed to understand $(L^t)^N$ is an estimate of the size of $(v \cdot \nabla)^k \Phi(x)$ on $B(y,d)$ when $k \geq 2$.  In particular, one would like to show that $(v \cdot \nabla)^k \Phi(x)$ is of the same magnitude as $d^k$.  Since $N_y[v] \leq d$, the inequalities \eqref{csineq2} and \eqref{dualineq} give that $|(v \cdot \nabla)^2 \Phi(x)| \lesssim d$ on $B(y,d)$ (since the second derivative is precisely $v \cdot H_x v$).  When $k \geq 3$, $|(v \cdot \nabla)^k \Phi(x)| \leq C_k d^\frac{k}{3}$ where $C_k$ depends on the $C^k$-norm of $\Phi$ on $\Omega$; this simply follows from the observation that $|v| \leq K^{-\frac{1}{3}} d^\frac{1}{3}$.  Since $d \leq R_{max}$, $d^\frac{k}{3} \leq (R_{max})^{\frac{k-3}{3}} d$ for all $k \geq 3$.

Now, using the estimates for $|(v \cdot \nabla) \Phi(x)|$ and the Leibniz rule, it is easily established that %$|(L^t)^N \psi(x,\xi)| \lesssim C_N d^{-N} \sum_{l=0}^N |(v \cdot \nabla)^l \psi(x)|$, which is itself 
$|(L^t)^N \psi(x,\xi)|\lesssim C d^{-N} ||\psi||_{C^N(B(y,d))}$, where $C$, as anticipated, depends on $N$, $n$, the $C^{N+1}$-norm of $\Phi$ on $\Omega$, and $R_{max}$.
%
%
%By virtue of \eqref{transpose}, one is lead to estimate the size of $(L^t)^N \psi$.  Induction on $N$ and the Leibnitz rule give the expansion
%\[(L^t)^N \psi (x) = i^N \left|\sum_{j=1}^n \sigma_j D_j \Phi(x) \right|^{-2N} \sum_{|\alpha| \leq N}  U_{\alpha}(x) D^\alpha \psi(x), \]
%where each $U_\alpha$ is a linear combination (with coefficients whose magnitudes are independent of $\Phi$ and $\psi$) of products of the form $\prod_{k=1}^{N} D^{\beta_k} \Phi$ for multiindices $\beta_1,\ldots,\beta_{N}$ such that $\sum |\beta_k| = 3N - |\alpha|$ and each $|\beta_k| \geq 1$. % 
%
%Now $r_{j_1} r_{j_2} r_{j_3} \leq C_{n,\Phi} r$ for any $j_1,\ldots,j_3$.  Thus $|D^{\beta} \Phi(x)| \leq c_{|\beta|,n} ||\Phi||_{C^{|\beta|}} r$ for all $x \in B(y,r)$, provided $|\beta| \geq 3$.  Moreover, $|D_j D_k \Phi(y)| = r_j^2 |\mu_j| \leq r$ when $j=k$ and $0$ otherwise; by the mean-value theorem, then, the same inequality must be true when $|\beta| = 2$.  Last but not least, the derivatives $|D_j \Phi(x)|$ are controlled by a constant times $r$ as well (since they are close to $|D_j \Phi(y)|$).  Therefore, this demonstrates (by taking products), that there exists a constant $c$ depending on $N$ and the $C^{N+1}(B(y,r))$-norm of $\Phi$ such that each $U_\alpha$ satisfies $|U_\alpha(x)| \leq c r^N$ for all $x \in B(y,r)$.% 
%
%To finish, the estimate $|(L^t)^N \psi(x)| \leq c r^{-N} \sum_{|\alpha| \leq N} |D^\alpha \psi(x)|$ has successfully been established.  
Taking absolute values on the right-hand side of \eqref{transpose} and making an $L^\infty$ estimate on the ball $B(y,d)$ gives the desired conclusion. % if one takes $A_{N,\psi}$ to be the supremum of $V^l \psi$ on $B(q,r)$ for all $l \leq N$.
\end{proof}

To apply proposition \ref{ibplemma} to the general situation \eqref{oscillate}, it is necessary to create a partition of unity.  Rather than attempting to decompose the support of $\psi$ into countably many, essentially disjoint balls $B(y,r)$, a simpler approach is to make the partition continuous, adapted to the balls $B(y,c_n N_y^*[\nabla \Phi(y)])$ for {\it each} $y$ in the support of $\psi$.  %To that end, it is necessary to verify that $\rho$ is well-behaved on all such balls:
For each $y \in \Omega$, let $r(y) := \min\{N_y^*[\nabla \Phi(y)],R_{max}\}$.  By proposition \ref{comp}, after $r$ is multiplied by a suitably small constant depending only on $n$, $r(x) \approx r(y)$ whenever $x$ and $y$ are $B(x,r(x)) \cap B(y,r(y))$ has nonempty intersection in $\Omega$.

Fix some smooth $\phi$ supported on $[-1,1]$ which is identically one on $[-\frac{1}{2},\frac{1}{2}]$.  For each $y$ in the support of $\psi$, let $\eta_y$ be a smooth cutoff function on $\R^n$ given by
%\[ \frac{1}{|B(y,\rho(y))|} \prod_{|\mu|^3 < K^2 \rho(y)} \phi \left( \frac{|E^{\mu}_y(x-y)|}{(K^{-1} \rho(y))^\frac{1}{3}} \right) \prod_{|\mu|^3 \geq K^2 \rho(y)} \phi \left( \frac{ |E^{\mu}_y(x-y)|}{(|\mu|^{-1} \rho(y))^\frac{1}{2}} \right), \]
$\eta_y(x) := |B(y,r(y))|^{-1} \phi(N_y[x-y,r(y)])$.
%where the product is understood to be taken only over those $\mu$ which are eigenvalues of the Hessian of $\Phi$ at $y$.
This cutoff function is necessarily supported on $B(y,r(y))$.  Moreover, $\eta_y$ is identically equal to $|B(y,r(y))|^{-1}$ on $B(y,\frac{r(y)}{4})$ by \eqref{scaling2}.
%if the support of $\phi$ is contained in $[-\frac{\delta}{2n}, \frac{\delta}{2n}]$, and $\eta_y$ is identically one on the ball $B(y,(4n)^{-1} \delta \rho(y))$  provided that $\phi$ is identically one on $[-\frac{\delta}{4n},\frac{\delta}{4n}]$ (for example).   %By proposition \ref{comp}, the support of $\phi$ may be chosen (in a way depending only on dimension) so that $c_n^{-1} \rho(y) \leq \rho(x) \leq c_n \rho(y)$ for all $x$ in the support of $\eta_y$; let $\phi$ be so chosen.

As for smoothness, for fixed $y$, let $\nabla_x$ be the gradient in the $x$ variable.  By the definition of $N_y$,
\[ v \cdot \nabla_x (N_y[x-y,r(y)])^2 = \sum_\mu (E_y^\mu v) \cdot E_y^\mu (x-y) \left( \left(\frac{|\mu|}{r(y)}\right)^\frac{1}{2} + \left(\frac{K}{r(y)}\right)^\frac{1}{3} \right)^2, \] 
which is less than or equal to $N_y[v,r(y)] N_y[x-y,r(y)]$ by Cauchy-Schwartz.  Applying the chain rule,  the $C^k(B(y,r(y))$-norm of $\eta_y$ must uniformly bounded by $C_{k,n} |B(y,r(y))|^{-1}$ where $C_{k,n}$ depends only on $k$ and $n$.  Moreover, the $C^k(B(x,r(x))$-norm of $\eta_y$ must be similarly uniformly bounded whenever $x \in B(y,r(y))$.
%
%Next, take $v_1,\ldots,v_n$ to be an orthonormal system of eigenvectors of the Hessian of $\Phi$ at $y$, with eigenvalues $\mu_1,\ldots,\mu_n$.  Fix $r := \rho(y)$, and for $j=1,\ldots,n$, let $r_j := r((|\mu_j| r)^\frac{1}{2} + (K r^2)^\frac{1}{3})^{-1}$.  As in proposition \ref{ibplemma}, let $D_j$ be the differential operator $D_j f(x) := r_j v_j \cdot \nabla f(x)$.  Then it is easy to check that for any multiindex $\alpha$, there is a contant $C$ (depending on $\alpha$, $n$, and $\delta$) for which $|D^\alpha \eta_y(x) | \leq C |B(y,\rho(y))|^{-1}$.  Moreover, for any $y'$ in the support of $\eta_y$, the same inequality holds: $|D^\alpha \eta_{y'}(x) | \leq C_{\alpha,n} |B(y',\rho(y'))|^{-1}$ (since the coefficients of the change of basis $\{D_j\} \mapsto \{ D_j'\}$ are bounded by virtue of the inequality \eqref{spectrumpert2} and the fact that $\rho(y) \approx \rho(y')$ established by proposition \ref{comp}).  
%
Finally, note that  propositions \ref{ballcomp} and \ref{comp} give that $|B(x,r(x))| \approx |B(y,\rho(y))|$ for all $x$ in the support of $\eta_y$.  This is because every ball of radius comparable to $r(y)$ and centered at $y$ is contained in a ball of radius comparable to $r(y)$ centered at $x$ and vice-versa.

Now consider the new function $\Psi$ given by
\[ \Psi(x) := \int_\Omega \eta_{y}(x) dy. \]
For a fixed $x \in \Omega$, the support of the integral for $\Psi$ is contained in $B(x,r')$ for some $r' \approx r(x)$; this is because every $y$ with $\eta_y(x) \neq 0$ must have the property that $x \in B(y,r(y))$, so that $y \in B(x,r')$ for some $r' \approx r(y) \approx r(x)$.%the support is contained in some ball centered at $x$ with radius comparable to $ \delta \rho(x)$ (with constant depending only on dimension).  This is because when $\eta_y(x) \neq 0$, it must be the case that both $\rho(y) \leq 8 (8n^3)^\frac{3}{4} \rho(x)$ (by proposition \ref{comp}) and $x \in B(y,\delta \rho(y))$; therefore $y \in B(x, 8 (8n^3)^\frac{7}{4} \delta \rho(x))$ by property 3 of proposition \ref{ballcomp}.  
Likewise, the integrand is identically one on some ball $B(x,r'')$ with $r'' \approx r(x)$.  This is because $\eta_y (x) = 1$ whenever $x \in B(y,\frac{1}{4}\rho(y))$, which is always true when $y \in B(x, r'')$ for some $r'' \lless r(x)$.  Now, given that $|B(x,r(x))| \approx |B(y,r(y))|$ in the support of the integral as well, it follows that $\Psi(x) \approx 1$ because the supremum of $\eta_y(x)$ times the size of the support is bounded by some constant depending only on $n$, but also there is a set, namely $B(x,r'')$ from above, which is completely contained in $\Omega$ which has size also comparable to $|B(x,r(x))|$ and the integrand is comparable to $|B(x,r(x)|^{-1}$ there.  (Note that for $B(x,r'') \subset \Omega$ it is necessary to further restrict $r(x) \lless R_{max}$.)

% et where $\nabla \Phi$ vanishes, provided that the entire ball $B(x,8^{-1} (8n^3)^{-\frac{7}{3}} (4n)^{-1} \delta \rho(x))$ is contained within $\Omega$.  Taking $\delta$ proportional to $\epsilon$ (defined to be the infimum over all $x$ in the support of $\psi$ of $d(x,\partial \Omega)/ N_x^*[\nabla \Phi(x)]$) will suffice.

It follows that the reciprocal of $\Psi$ is well-defined.  Moreover, for any positive integer $k$, the $C^k(B(y,r(y)))$-norms of $\Psi$ and $\Psi^{-1}$ must be uniformly bounded by some constant $C$ depending only on $k$ and $n$.
% Therefore,  given any smooth cutoff $\psi$ and any multiindex $\alpha$, there is a constant $C_{\alpha,n,\psi,\epsilon}$ such that
%\[ \left| D^\alpha \left( \frac{\psi(x) \eta_{y}(x)}{\Psi(x)} \right) \right| \leq C_{\alpha,n,\psi,\epsilon} |B(y,\rho(y))|^{-1} \]
%for all $x \in \Omega$, where the operators $D$ arise from eigenvectors of the Hessian of $\Phi$ at $y$.  This follows from Leibnitz rule and the known derivative estimates for $\eta_y$ and $\Psi$ (plus the assumption that the derivatives of $\psi$ are merely bounded); the fact that $\Psi$ appears in the denominator is no problem since $\psi$ is bounded away from zero. 

To complete the proof of \eqref{sublevel}, Fubini's theorem dictates that
\[ \int e^{i \lambda \Phi(x)} \psi(x) dx = \int_\Omega \! \int e^{i \lambda \Phi(x)} \frac{\psi(x) \eta_{y}(x)}{\Psi(x)} dx dy, \]
%where $S$ is the set of points $y$ for which $y \in B(x,8 (8n^3)^\frac{7}{3} \delta \rho(x))$ for some $x$ in the support of $\psi$.  Note that if $\delta/\epsilon$ is less than a prescribed constant $c_n$, by property 5 of proposition \ref{ballcomp} it may be assumed that there is a constant  $c_n'$ such that $d(x,\partial \Omega) \geq c_n' \epsilon N_x^*[\nabla \Phi(x)]$ for all $x \in S$. 
Estimating the inner integral by proposition \ref{ibplemma} (using the case $N=0$ when $\lambda r(y) \leq 1$ and the general case elsewhere) and using the derivative estimates for $\eta_y$ and $\Psi^{-1}$ gives the inequality
\begin{equation} \left| \int e^{i \lambda \Phi(x)} \psi(x) dx \right| \leq C_N \int_{\Omega}  \frac{||\psi||_{C^N(B(y,r(y)))} dy}{1 + (\lambda r(y))^N}, \label{sublevel2} \end{equation}
where $C_N$ depends on $N$, $n$, the $C^{N+1}$-norm of $\Psi$ on $\Omega$, and $R_{max}$. 
%This inequality is a mild improvement over \eqref{sublevel}, since $S$ is contained in $\Omega$ and separated from the boundary.
Since the $C^k(B(y,r))$-norm of $\Psi$ increases as $r$ increases, \eqref{sublevel} holds.

\section{Proof of theorem 1}
\label{gradientsec}
%{\bf This old stuff is craziness!} Observe that the only way in which the inequality \eqref{sublevel2} depends on the first derivatves of $\Phi$ is through the set $S$ over which the integral on the right-hand side is taken (or equivalently, through the parameter $\epsilon$).
In light of the results of the previous section, especially the inequality \eqref{sublevel}, to prove theorem \ref{oscthm} (which is completely local) it suffices to assume that $\psi$ is supported on some ball $B(y,d)$, and that $\Omega = B(y,(1+\epsilon)d)$ for any small $\epsilon > 0$.
%By virtue of \eqref{sublevel2}, the task is now reduced to estimating $N_x^*[\nabla \Phi(x)]$ from below on the set $S$ where $d(x,\partial \Omega) \geq c_n' \epsilon N_x^*[\nabla \Phi(x)]$.
Recall the nondegeneracy condition \eqref{nondegen}:  for each $x \in \Omega$ and each finite $\mu$, let $V_{\mu,x}$ be the vector space of eigenvectors of $H_x$ with eigenvalues $\nu$ satisfying $|\nu| \leq \mu$.  From here forward, it will be assumed that there is a constant $K' > 0$ such that, for any $\mu$ (with magnitude less than some prescribed maximum $M$) and any unit vector $v \in V_{\mu,x}$, there is a unit vector $w \in V_{R \mu,x}$ such that
\begin{equation*} (v \cdot \nabla) (v \cdot \nabla) (w \cdot \nabla) \Phi(y) \geq K'
\end{equation*}
for all $y \in \Omega$.   The goal of this section is to show that, when the nondegeneracy condition holds, there are a fixed, bounded number of points $z_i \in \Omega$ such that the nonisotropic magnitude of $\nabla \Phi(y)$ at $y$ scales linearly in the nonisotropic distance from $y$ to one of the points $z_i$.  Once this fact is established, a standard dyadic decomposition of $\Omega$ into balls $B(z_i,2^{-j})$ will be used to estimate the right-hand side of \eqref{sublevel} and yield theorem \ref{oscthm}.

To accomplish this goal, some of the estimates already established (for example, \eqref{dualineq}) will need to be refined slightly to reflect the distinct behaviors encountered when differentiating $\Phi$ in directions corresponding to large-eigenvalue eigenvectors as compared to those directions with small eigenvalues.  It is in this section that the existence of spectral gaps will be exploited.  To begin, a stronger form of \eqref{dualineq} is established allowing for even more favorable comparisons between $N_x^*[w]$ and $N_z^*[w]$ when $w$ sits in the span of the large-eigenvalue eigenvectors:
\begin{proposition}
Fix $x$ and $z$ in $\Omega$ with $|x-z| \leq K^{-\frac{1}{3}} d^\frac{1}{3}$, and fix any constant $\beta > 0$.  Suppose $w \in \R^n$ satisfies $E_z^\mu w = 0$ for all $|\mu| \leq \beta (K^2 d)^\frac{1}{3}$.  Then for all $r \leq d$,
\begin{equation}
N_z^*[w,r] \lesssim (1 + \beta^{-1}) N_x^*[w,r]. \label{dualineqplus}
\end{equation}
\end{proposition}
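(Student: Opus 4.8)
The plan is to mimic the Cauchy--Schwarz/spectral-perturbation proof of \eqref{dualineq} in Proposition \ref{normprop}, but to use the hypothesis that $w$ has no $H_z$-spectral components with $|\mu| \le \beta(K^2 d)^{\frac{1}{3}}$ in order to avoid the change of scale $r \mapsto d^{\frac{1}{4}}r^{\frac{3}{4}}$ forced in \eqref{dualineq}. Write $\phi(\nu) := (|\nu|r)^{\frac{1}{2}} + (Kr^2)^{\frac{1}{3}}$ for the weight appearing in the definition of $N^*$, so that $(N_x^*[w,r])^2 = \sum_{\mu'} |E_x^{\mu'}w|^2\, \phi(\mu')^{-2}$, and likewise for $N_z^*$. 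Expanding $E_z^\mu w = \sum_{\mu'} E_z^\mu E_x^{\mu'} w$, applying Cauchy--Schwarz over the at most $n$ eigenvalues $\mu'$ of $H_x$, and using that $E_z^\mu w = 0$ unless $|\mu| > \beta(K^2 d)^{\frac{1}{3}}$, one obtains
\[ (N_z^*[w,r])^2 \ \le\ n \sum_{\mu'} |E_x^{\mu'}w|^2 \sum_{|\mu| > \beta(K^2 d)^{\frac{1}{3}}} \frac{\|E_z^\mu E_x^{\mu'}\|^2}{\phi(\mu)^2}. \]

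It then suffices to prove, for every $\mu$ with $|\mu| > \beta(K^2 d)^{\frac{1}{3}}$ and every $\mu'$, the pointwise comparison $\|E_z^\mu E_x^{\mu'}\|\, \phi(\mu') \le (1+\beta^{-1})\, \phi(\mu)$; summing the at most $n$ terms in the inner sum above then bounds it by $\lesssim (1+\beta^{-1})^2 \phi(\mu')^{-2}$, and substituting into the display and taking square roots yields $N_z^*[w,r] \lesssim (1+\beta^{-1})\, N_x^*[w,r]$, as desired. To prove the comparison, note that \eqref{spectrumpert} together with $|x-z| \le K^{-\frac{1}{3}}d^{\frac{1}{3}}$ (hence $K|x-z| \le (K^2 d)^{\frac{1}{3}}$) gives
\[ \|E_z^\mu E_x^{\mu'}\| \ \le\ \min\left\{\, 1,\ \frac{\min\{|\mu|,|\mu'|\} + (K^2 d)^{\frac{1}{3}}}{\max\{|\mu|,|\mu'|\}} \,\right\}. \]
If $|\mu'| \le |\mu|$ then $\phi(\mu') \le \phi(\mu)$ and the estimate $\|E_z^\mu E_x^{\mu'}\| \le 1$ already gives the comparison. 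If $|\mu'| > |\mu|$, then since $|\mu| > \beta(K^2 d)^{\frac{1}{3}}$ one has $\|E_z^\mu E_x^{\mu'}\| \le (|\mu| + (K^2 d)^{\frac{1}{3}})/|\mu'| \le (1+\beta^{-1})\,|\mu|/|\mu'|$, and a direct inspection shows $(|\mu|/|\mu'|)\,\phi(\mu') = |\mu|\,|\mu'|^{-\frac{1}{2}} r^{\frac{1}{2}} + (|\mu|/|\mu'|)(Kr^2)^{\frac{1}{3}} \le (|\mu|r)^{\frac{1}{2}} + (Kr^2)^{\frac{1}{3}} = \phi(\mu)$, each term being controlled using $|\mu| \le |\mu'|$.

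I do not expect a genuine obstacle. The one substantive point is recognizing that restricting the $\mu$-sum to $|\mu| > \beta(K^2 d)^{\frac{1}{3}}$ converts the additive error $(K^2 d)^{\frac{1}{3}}$ in \eqref{spectrumpert} into the multiplicative factor $\beta^{-1}|\mu|$, which is exactly what allows the weights at $z$ and at $x$ to be compared at the \emph{same} scale $r$, rather than incurring the $r \mapsto d^{\frac{1}{4}}r^{\frac{3}{4}}$ loss of \eqref{dualineq}. The only care needed is to split according to whether $|\mu'| \le |\mu|$ or $|\mu'| > |\mu|$, and to note the degenerate case in which $H_z$ has no eigenvalue exceeding $\beta(K^2 d)^{\frac{1}{3}}$, where $w = 0$ and the claim is trivial.
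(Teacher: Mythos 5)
Your proof is correct and follows essentially the same route as the paper: both mimic the proof of \eqref{dualineq} but replace \eqref{spectrumpert2} by \eqref{spectrumpert}, and both use the threshold $|\mu| > \beta(K^2 d)^{1/3}$ to turn the additive error $K|x-z| \le (K^2 d)^{1/3}$ into a multiplicative factor $1+\beta^{-1}$, which is precisely what keeps the comparison at the same scale $r$. The only difference is cosmetic: you carry the full weight $(|\nu|r)^{1/2}+(Kr^2)^{1/3}$ through a clean term-by-term comparison, whereas the paper drops the $(Kr^2)^{1/3}$ piece early and restores it at the last step.
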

\begin{proof}
The proof is a minor modification of the proof of \eqref{dualineq}.  The major difference is that \eqref{spectrumpert2} is replaced by \eqref{spectrumpert} and  $(\max\{|\mu|,|\mu'|\})^{-1}$ is replaced by $(1 + \beta)(\beta |\mu'| + |\mu|)^{-1}$. More precisely:
\begin{align*}
(N_z^*[w,r])^2 & \leq \sum_{|\mu| > \beta (K^2 d)^\frac{1}{3}} \frac{|E_z^\mu w|^2}{|\mu| r} \\
& \leq n (1+\beta) \sum_{|\mu| > \beta (K^2 d)^\frac{1}{3}} \sum_{\mu'} \frac{|\mu| + (K^2 r)^\frac{1}{3}}{\beta |\mu'| + |\mu|} \frac{|E_x^{\mu'} w|^2}{|\mu| r} \\
& \leq n \frac{(1+\beta)^2}{\beta} \sum_{|\mu| > \beta (K^2 d)^\frac{1}{3}} \sum_{\mu'} \frac{|E_x^{\mu'} w|^2}{|\mu| r + \beta |\mu'| r} \\
& \leq n^2 \frac{(1+\beta)^2}{\beta^2} \sum_{\mu'} \frac{|E_x^{\mu'} w|^2}{(K r^2)^\frac{2}{3} + |\mu'| r} \leq 2 n^2 \frac{(1+\beta)^2}{\beta^2} (N_x^*[w,r])^2
\end{align*}
(the final line is true because $|\mu| r > \beta (K^2 r)^\frac{1}{3} \geq \beta (K^2 d)^\frac{1}{3}$).
\end{proof}
To simplify notation somewhat, let $E^+_y$ be projection onto the space spanned by the eigenvectors of $H_y$ which have eigenvalues in magnitude greater than $\beta (K^2 d)^\frac{1}{3}$, where $\beta$ is (for the moment) any fixed, positive real number.  The following proposition accomplishes the desired estimate of this section (namely, that the nonisotropic length of $\nabla \Phi$ scales like nonisotropic distance) when the points under consideration have a displacement vector which points in essentially the ``large-eigenvalue'' directions.  In particular, this situation is sufficiently favorable that there is no need to appeal to the nondegeneracy condition here:
\begin{proposition}
Suppose $x, z \in B(y,d) \cap \Omega$.  If $\beta \ggreater 1$ and $E_y^+ (x-z) = x-z$, then
%\begin{equation}
%N_x^*[\nabla \Phi(x) - \nabla \Phi(z)] \gtrsim N_y[x-z]. \label{hesssize1}
%\end{equation}
%and
\begin{equation}
N_x^*[\nabla \Phi(x)] + N_z^*[\nabla \Phi(z)] \gtrsim N_y[x-z]. \label{hesssize1}
\end{equation}
Moreover, fix any $\delta > 0$.  When $\beta \ggreater 1+ \delta^{-1}$, then for every $x$ on the boundary of $B(y,d)$ which satisfies $N_y[E_y^+(x-y),d] \geq \delta$, 
\begin{equation}
%N_x^*[\nabla \Phi(x) - \nabla \Phi(y)] \gtrsim d. 
N_x^*[\nabla \Phi(x)] + N_y^*[\nabla \Phi(y)] \gtrsim d. \label{hesssize2}
\end{equation}
\end{proposition}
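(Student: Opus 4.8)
The plan is to run a Taylor expansion of $\nabla\Phi$ about the point whose non-isotropic norm is to be bounded from below, and to use the hypothesis $\beta \ggreater 1$ to show that in the ``large eigenvalue'' directions the linear (Hessian) term of that expansion dominates the quadratic remainder on the relevant non-isotropic scale. The nondegeneracy condition \eqref{nondegen} plays no role here: the content is purely the stable, nondegenerate-quadratic behaviour of $\Phi$ in the directions on which $H_y$ is large.

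The key local fact I would establish first is: if $E_y^+ v = v$ and $\beta \ggreater 1$, then for every $x \in B(y,d)$ one has $N_x^*[H_x v] \gtrsim N_y[v]$ (with room to spare). The reason is that for eigenvalues $|\mu| \gtrsim \beta (K^2 r)^{\frac13}$ the weight $(|\mu| r)^{\frac12} + (K r^2)^{\frac13}$ is comparable to $(|\mu| r)^{\frac12}$, so that $N_y[v,r]^2 \approx r^{-1}\sum_\mu |\mu|\,|E_y^\mu v|^2$ for all $r \le d$; and since $\| H_x - H_y \| \le (K^2 d)^{\frac13} \lless \beta (K^2 d)^{\frac13}$, the estimate \eqref{spectrumpert} (applied with an auxiliary threshold $\sqrt\beta (K^2 d)^{\frac13}$, giving cross terms of size $\lesssim \beta^{-\frac12}$) shows that $v$ essentially lies in the span of eigenvectors of $H_x$ with eigenvalues $\gtrsim \sqrt\beta (K^2 d)^{\frac13}$, so that $N_x^*[H_x v, r]^2$ is likewise comparable to $r^{-1}\sum_\mu |\mu|\,|E_x^\mu v|^2 \approx r^{-1}\sum_\mu |\mu|\,|E_y^\mu v|^2$. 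This is the analogue for $N$ of the refined comparison \eqref{dualineqplus}.

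To prove \eqref{hesssize1} I would expand $\nabla\Phi(x)-\nabla\Phi(z) = H_x(x-z) + e$ with $|e| \le \frac12 K|x-z|^2$; since $x-z = E_y^+(x-z)$ and $\beta \ggreater 1$ one checks $|x-z| \lesssim K^{-\frac13}(N_y[x-z])^{\frac13}$, hence $N_x^*[e] \lesssim N_y[x-z]$, while the previous step gives $N_x^*[H_x(x-z)] \gtrsim N_y[x-z]$ with a power of $\beta$ to spare. By \eqref{dualtriangle} this forces $N_x^*[\nabla\Phi(x)-\nabla\Phi(z)] \gtrsim N_y[x-z]$, and a second application of \eqref{dualtriangle} shows that one of $N_x^*[\nabla\Phi(x)]$, $N_x^*[\nabla\Phi(z)]$ is $\gtrsim N_y[x-z]$. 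In the first case we are done; in the second, the lower bound in fact comes from the components of $\nabla\Phi(z)$ lying in the large-eigenvalue subspace of $H_x$, so it survives projecting onto that subspace, and then \eqref{dualineqplus} (with the two points in its statement taken to be $z$ and $x$, in that order) transfers the bound to base point $z$, yielding $N_z^*[\nabla\Phi(z)] \gtrsim N_y[x-z]$. Statement \eqref{hesssize2} is the same argument with $z$ replaced by $y$: expanding about $x$, the ``signal'' is $H_y E_y^+(x-y)$, whose non-isotropic dual length is $\approx N_y[E_y^+(x-y)] \gtrsim d$, the last inequality following from $N_y[E_y^+(x-y),d]\ge\delta$ together with \eqref{scaling1}; here one must take $\beta \ggreater 1 + \delta^{-1}$ precisely because the signal is only a $\delta$-fraction of the full scale $d$, so the power of $\beta$ gained in the main term has to beat $\delta^{-1}$ before the $O(d)$ remainder can be absorbed.

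The main obstacle is the spectral bookkeeping in the key local fact — controlling the non-isotropic norm at $x$ of a vector defined through the spectral decomposition at the nearby point $y$ — since the two Hessians need not have any common invariant subspace and \eqref{spectrumpert} is only useful here because $\beta$ is huge. One must also be careful that the comparison $N_x^*[H_x v] \gtrsim N_y[v]$ is only ever needed up to the non-isotropic scale $N_y[x-z]$ (respectively $d$), at which the Taylor remainder $e$ is genuinely of lower order; and the final transfer of the bound back to the base point named in the statement, without degrading the non-isotropic scale, is exactly what forces the use of \eqref{dualineqplus} in place of the lossier \eqref{dualineq}.
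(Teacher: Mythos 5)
Your overall strategy is in the right spirit---Taylor expand $\nabla\Phi$, show the Hessian term beats the quadratic remainder at the non-isotropic scale when $\beta \ggreater 1$, and observe that \eqref{nondegen} plays no role---but your choice of base point for the expansion creates real difficulties that the paper avoids. The paper writes
\[ \nabla \Phi(x) - \nabla \Phi(z) = H_y (x-z) + \text{(remainder)}, \]
keeping the linear term at the \emph{fixed} reference point $y$, and then works entirely with the spectral projections $E_y^\mu$; the lower bound on the linear term (\eqref{rhs01} in the paper) is then a purely scalar inequality in the eigenvalues of $H_y$, involving no spectral perturbation whatsoever. You instead expand with $H_x$ as the linear term, which forces you to justify the ``key local fact'' $N_x^*[H_x v] \gtrsim N_y[v]$ for $E_y^+ v = v$. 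This fact is true, but your justification is incomplete: the cross-term estimate \eqref{spectrumpert} does show that $v$ sits mostly in the high eigenspaces of $H_x$, but it does \emph{not} by itself give $\sum_\mu |\mu|\,|E_x^\mu v|^2 \approx \sum_\mu |\mu|\,|E_y^\mu v|^2$. The obstruction is that $H \mapsto |H|$ is not Lipschitz in operator norm, and $v$ can be split between positive and negative high eigenspaces (so testing against $v$ itself gives $v\cdot H_x v$, which can vanish). One way to close this gap is to test the duality \eqref{csineq} against $u = \mathrm{sgn}(H_y)\,v$: then $u \cdot H_x v = v\cdot|H_y|v + u\cdot(H_x - H_y)v$ is within $\beta^{-1}$ of $\sum_\mu|\mu||E_y^\mu v|^2$, and one needs the \emph{primal} analogue of \eqref{dualineqplus}, namely $N_x[u,r] \lesssim N_y[u,r]$ when $E_y^+ u = u$, whose proof you would still have to supply. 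None of this is needed in the paper's route.

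A second soft spot is the transfer in the final step. When the triangle inequality puts the lower bound on $N_x^*[\nabla\Phi(z)]$, you want to move it to base point $z$ via \eqref{dualineqplus}; but for that you must first restrict to the high-eigenspace projection $P_x$ (a spectral projection of $H_x$), apply the swapped \eqref{dualineqplus}, and then argue that $N_z^*[P_x\nabla\Phi(z)] \lesssim N_z^*[\nabla\Phi(z)]$---which is \emph{not} automatic, since $P_x$ is not a spectral projection of $H_z$. The paper instead argues by contradiction using the lossy comparison \eqref{dualineq}: assuming $N_z^*[\nabla\Phi(z)]\lless N_y[x-z]$, the term $N_x^*[\nabla\Phi(z)]\lesssim (N_y[x-z])^{1/3}(N_z^*[\nabla\Phi(z)])^{3/4}$ is then automatically much smaller than $N_y[x-z]$, forcing $N_x^*[\nabla\Phi(x)] \gtrsim N_y[x-z]$. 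This bypasses the whole projection/transfer issue. Finally, for \eqref{hesssize2}, setting ``$z = y$'' is not literally the same argument: $x - y$ need not lie in the range of $E_y^+$ (only $N_y[E_y^+(x-y),d] \ge \delta$ is assumed), so you must decompose $x - y$ into its $E_y^+$ and $E_y^-$ parts and account for the two contributions to the remainder separately; this is exactly what the paper's \eqref{rhs00} does, with the $E_y^-$ contribution absorbed by taking $\beta\ggreater 1$ and the signal-to-noise ratio rescued by $\beta\ggreater\delta^{-1}$. You correctly intuit why the constant $1+\delta^{-1}$ appears, but the argument as sketched elides the $E_y^{\pm}$ decomposition and the several missing comparison lemmas.
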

\begin{proof}
For contradiction, assume $N_x^*[\nabla \Phi(x)] + N_z^*[\nabla \Phi(z)] \lless N_y[x-z]$.
In this case, it suffices to prove that $N_x^*[\nabla \Phi(x) - \nabla \Phi(z)] \gtrsim N_y[x-z]$.  This is because
\[(N_x^*[\nabla \Phi(x) - \nabla \Phi(z)])^\frac{1}{2} \leq (N_x^*[\nabla \Phi(x)])^\frac{1}{2} + (N_x^*[\nabla \Phi(z)])^\frac{1}{2} \]
by \eqref{dualtriangle} and $N_x^*[\nabla \Phi(z)] \lesssim (N_y[x-z])^\frac{1}{3} (N_z^*[\nabla \Phi(z)])^\frac{3}{4}$ by \eqref{dualineq} (since $x$ and $z$ must be contained in a Euclidean ball of radius $K^{-\frac{1}{3}} (N_y[x-z])^\frac{1}{3}$).  With the assumption $N_x^*[\nabla \Phi(x) - \nabla \Phi(z)] \gtrsim N_y[x-z]$, it must be the case that $N_x^*[\nabla \Phi(x)] \gtrsim N_y[x-z]$ when $N_z^*[\nabla \Phi(z)] \lless N_y[x-z]$.  The argument is similar for \eqref{hesssize2}, making it necessary to show that $N_x^*[\nabla \Phi(x) - \nabla \Phi(y)] \gtrsim d$, which is the same inequality needed for \eqref{hesssize1} if $z=y$.

By \eqref{dualineqplus}, it suffices to show that $N_y^*[E_y^+(\nabla \Phi(x) - \nabla \Phi(z))] \gtrsim N_y[x-z]$.
To establish this inequality, the following variant of \eqref{hessianspect} is needed:  when $x,z \in B(y,d)$, then
\[ |\nabla \Phi(x) - \nabla \Phi(z) - H_y (x-z)| \leq (K^2 d)^\frac{1}{3} |x-z|. \]
The proof is essentially the same as the proof of \eqref{hessianspect}, but is based on a slightly different application of the fundamental theorem of calculus, namely
\[ \nabla \Phi(x) - \nabla \Phi(z) = H_y (x-z) + \int_0^1 \int_0^1 \frac{d}{d \varphi} H_{\varphi ( \theta x + (1-\theta)z) + (1-\varphi)y} (x-z) d \varphi d \theta. \]
It follows from now standard arguments that
\begin{align*}
 (N_y^*[E_y^+ (\nabla \Phi(x) - \nabla \Phi(z) - H_y (x-z)),r])^2 & \leq \frac{n}{\beta} (K^2 d)^\frac{1}{3} r^{-1} |x-z|^2
%& \leq \frac{n}{\beta^2} (N_y[x-z,r])^2.
\end{align*}
for any $r \leq d$.  But $|x-z|^2 = \sum_\mu |E_y^\mu(x-z)|^2$; breaking the sum into big and small $\mu$, it follows that
\begin{align} 
(N_y^*[E_y^+ (\nabla \Phi(x) - \nabla \Phi(z) - H_y (x-z)),r])^2 & \nonumber \\ \leq \frac{n}{\beta^2} (N_y[E_y^+(x-z),r])^2 + \frac{n}{\beta} &  d^\frac{1}{3} r^{-\frac{1}{3}} (N_y[E_y^{-} (x-z),r])^2. \label{rhs00}
\end{align}
Since $|\mu| > \beta (K^2 d)^\frac{1}{3}$, elementary manipulations give
\[ \frac{|\mu|}{\left((|\mu| r)^\frac{1}{2} + (K r^2)^\frac{1}{3} \right)} \geq \frac{\beta}{2(1+\beta)} r^{-1} \left((|\mu| r)^\frac{1}{2} + (K r^2)^\frac{1}{3} \right) \]
for any $r \leq d$.  Applying this inequality to the norms $N_y^*$ and $N_y$, it must be the case that
\begin{equation} N_y^*[E_y^+ H_y (x-z), r] \geq \frac{\beta}{2(1+\beta)} N_y[E_y^+ (x-z),r]. \label{rhs01}
\end{equation}
%
%\[ \nabla \Phi(x) - \nabla \Phi(z) = H_y (x-z) + \int_0^1 \int_0^1 \frac{d}{d \varphi} H_{\varphi ( \theta x + (1-\theta)z) + (1-\varphi)y} (x-z) d \varphi d \theta \]
%\[ |\nabla \Phi(x) - \nabla \Phi(z) - H_y (x-z)| \leq (K^2 d)^\frac{1}{3} |x-z| \]
%Suppose $E_y^+(x-z) = x-z$.  
%\begin{align*}
% (N_y^*[E_y^+ (\nabla \Phi(x) - \nabla \Phi(z) - H_y (x-z)),r])^2 & \leq \frac{n}{\beta} (K^2 d)^\frac{1}{3} r^{-1} |x-z|^2 \\
%& \leq \frac{n}{\beta^2} (N_y[x-z,r])^2.
%\end{align*}
%But
%\[N_y^*[E_y^+ H_y (x-z),r] \geq \frac{\beta}{2(1+\beta)} N_y[x-z,r]. \]
To prove \eqref{hesssize1}, simply observe in \eqref{rhs00} that $E_y^+(x-z) = (x-z)$ and $E_y^-(x-z) = 0$.  Combining \eqref{rhs00} and \eqref{rhs01}, and using the triangle inequality gives \eqref{hesssize1} by taking $r = N_y[x-z]$ and applying \eqref{dualineqplus}.  As for \eqref{hesssize2},  fixing $r = d$ and $z = y$, the right-hand side of \eqref{rhs01} is bounded away from zero when $\beta \ggreater \delta^{-1}$, and the right-hand side \eqref{rhs00} is bounded away from one when $\beta \ggreater 1$.
%Thus, if $\beta$ is sufficiently large, then
%\[N_x^*[\nabla \Phi(x) - \nabla \Phi(z)] \gtrsim N_y[x-z]. \]
\end{proof}

Now the second half of the goal at hand must be accomplished; namely, the nonisotropic norms of $\nabla \Phi(x)$ and $\nabla \Phi(y)$ must be compared when $x-y$ does not point in a ``large-eigenvalue'' direction.  It is at this point that the nondegeneracy condition applies.  From the previous proposition, we may assume that $N_y[E_y^+(x-y),d] \leq \delta$, where $d$ is the distance from $y$ to $x$ and $\delta$ is a nonnegative parameter to be specified.  Suppose $B(y,d)$ has a spectral gap on $(1,\beta]$.   If $d$ is sufficiently small, then the nondegeneracy condition implies the existence of a vector $w \in V_{\mu}$ with $\mu = R (K^2 d)^\frac{1}{3}$ such that $(w \cdot \nabla) (E_y^- (x-y) \cdot \nabla)^2 \Phi(z) \geq K'|E_y^-(x-y)|^2$ for all $z \in \Omega$, where $E_y^- (x-y) := x-y - E_y^+(x-y)$.  This $w$ necessarily satisfies $N_y[w,d] \leq (1+R^\frac{1}{2}) |w| (K d^{-1})^\frac{1}{3}$ by virtue of the fact that $w$ is a unit vector lying in $V_\mu$.  Moreover, $|E_y^+(x-y)| \leq \delta \beta^{-\frac{1}{2}} (K^{-1} d)^\frac{1}{3}$ while  $|E_y^-(x-y)| \geq (\frac{1-\delta^2}{2})^\frac{1}{2} (K^{-1} d)^\frac{1}{3}$.  Thus, when $\beta \ggreater \delta^{-2}$, it must also be the case that $(w \cdot \nabla)((x-y) \cdot \nabla)^2 \Phi(z) \geq \frac{1}{2} K' |w| |x-y|^2$.  The bottom line of these calculations is that the nondegeneracy condition on $\Phi$ guarantees that the hypotheses of the following proposition hold; as a result the nonisotropic norm can again be favorably estimated:
\begin{proposition}
Fix any ball $B(y,d) \subset \Omega$, and any $x$ on the boundary of $B(y,d)$.  Suppose there exists a vector $w$ such that $(w \cdot \nabla)((x-y) \cdot \nabla)^2 \Phi(z) \geq K' |w| |x-y|^2$ for all $z \in B(y,d)$ and $N_y[w,d] \leq \gamma |w| (K d^{-1})^\frac{1}{3}$.  Then for some $\alpha \lless K'/K$ and $\delta \lless K'/(K \gamma)$, if $B(y,d)$ has a spectral gap on $(\alpha,\beta]$ and $N_y[E^+_y(x-y),d] \leq \delta$, then
\[ N_x^*[\nabla \Phi(x)] + N_y^*[\nabla \Phi(y)] \gtrsim \left(\frac{K'}{K \gamma} \right)^3 d. \]
\end{proposition}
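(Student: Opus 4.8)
The plan is to test $\nabla\Phi(x)-\nabla\Phi(y)$ against the single vector $w$ furnished by the hypothesis and to read off the nonisotropic size of the difference from that pairing. Since $x$ lies on the boundary of $B(y,d)$ we have $N_y[x-y,d]=1$, and the spectral gap hypothesis means $H_y$ has no eigenvalue of magnitude in $(\alpha(K^2 d)^{1/3},\beta(K^2 d)^{1/3}]$; hence $E_y^+$ and $E_y^-=I-E_y^+$ split the spectrum of $H_y$ into a ``big'' block ($|\mu|>\beta(K^2 d)^{1/3}$) and a ``small'' block ($|\mu|\le\alpha(K^2 d)^{1/3}$) with nothing between them. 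First I would apply the Taylor-type identity \eqref{taylor} to $f(t):=w\cdot\nabla\Phi(y+t(x-y))$ on $[0,1]$ (the segment lies in $\overline{B(y,d)}$ since the ball is an ellipsoid): since $f'(0)=(x-y)\cdot H_y w$ and $f''(t)=(w\cdot\nabla)((x-y)\cdot\nabla)^2\Phi\ge K'|w||x-y|^2$, one gets
\[ w\cdot(\nabla\Phi(x)-\nabla\Phi(y))\ \ge\ (x-y)\cdot H_y w+\tfrac12 K'|w||x-y|^2. \]

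The crux is to show the quadratic term dominates the possibly-adverse term $(x-y)\cdot H_y w$. Writing $(x-y)\cdot H_y w=\sum_\mu\mu\,(E_y^\mu(x-y))\cdot(E_y^\mu w)$ and splitting along the gap, the small block contributes at most $\alpha(K^2 d)^{1/3}|x-y||w|\lesssim\alpha K^{1/3}d^{2/3}|w|$ (using $|x-y|\le K^{-1/3}d^{1/3}$), while on the big block Cauchy--Schwarz together with $\sum_{|\mu|\text{ big}}|\mu||E_y^\mu(x-y)|^2\le d\,N_y[E_y^+(x-y),d]^2\le d\delta^2$ and $\sum_{|\mu|\text{ big}}|\mu||E_y^\mu w|^2\le d\,N_y[w,d]^2\le\gamma^2 K^{2/3}d^{1/3}|w|^2$ gives a contribution $\lesssim\delta\gamma K^{1/3}d^{2/3}|w|$. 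On the other hand, from $N_y[x-y,d]=1$ and $N_y[E_y^+(x-y),d]\le\delta$ one extracts $|x-y|^2\gtrsim K^{-2/3}d^{2/3}$, because almost all of the unit $N_y$-length of $x-y$ must be carried by $E_y^-(x-y)$. Choosing $\alpha\lless K'/K$ and $\delta\lless K'/(K\gamma)$ makes $(\alpha+\delta\gamma)K^{1/3}d^{2/3}|w|$ at most a quarter of $\tfrac12 K'|w||x-y|^2$, so $w\cdot(\nabla\Phi(x)-\nabla\Phi(y))\gtrsim K'K^{-2/3}d^{2/3}|w|$.

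To convert this scalar estimate into a statement about $N_x^*$, I would transfer $N_y[w,\cdot]$ to $N_x[w,\cdot]$: at radius $d$, \eqref{normineq} gives $N_x[w,d]\lesssim N_y[w,d]\le\gamma|w|(Kd^{-1})^{1/3}$ with no loss, and \eqref{scaling1} then gives $N_x[w,r]\lesssim(d/r)^{1/2}\gamma|w|(Kd^{-1})^{1/3}$ for all $r\le d$. Feeding this and the pairing bound into \eqref{csineq} yields $N_x^*[\nabla\Phi(x)-\nabla\Phi(y),r]\gtrsim(K'/(K\gamma))(d/r)^{1/2}$ for every $r\le d$; evaluating at $r=N_x^*[\nabla\Phi(x)-\nabla\Phi(y)]$, where the left side is $1$ (and if this infimum exceeds $d$ the desired bound is immediate), gives $N_x^*[\nabla\Phi(x)-\nabla\Phi(y)]\gtrsim(K'/(K\gamma))^2 d$. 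Finally \eqref{dualtriangle} forces $\max\{N_x^*[\nabla\Phi(x)],N_x^*[\nabla\Phi(y)]\}\gtrsim(K'/(K\gamma))^2 d$; using $K'\le K$ and $\gamma\ge1$ (so $K'/(K\gamma)\le1$), in the first case $N_x^*[\nabla\Phi(x)]\gtrsim(K'/(K\gamma))^3 d$ already, and in the second \eqref{dualineq} transfers the bound down to $N_y^*[\nabla\Phi(y)]\gtrsim(K'/(K\gamma))^{8/3}d\ge(K'/(K\gamma))^3 d$; either way the sum is $\gtrsim(K'/(K\gamma))^3 d$.

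The step I expect to be the main obstacle is the control of $(x-y)\cdot H_y w$: this is exactly where the spectral gap is indispensable, for without it $H_y$ could possess an eigenvalue of magnitude $\approx\beta(K^2 d)^{1/3}$ paired against an unconstrained component of $x-y$, producing a term of size $\approx\beta K^{1/3}d^{2/3}|w|$ that would swamp the gain, and it is here that the smallness thresholds for $\alpha$ and $\delta$ must be calibrated against $K'/K$ and $K'/(K\gamma)$. A secondary technical point is that one must pair at the optimal radius $r$ (equivalently, use the scale-by-scale lower bound for $N_x^*[\,\cdot\,,r]$) rather than only at $r=d$, since the latter loses an additional power of $K'/(K\gamma)$ and would fall short of the stated exponent.
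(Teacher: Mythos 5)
Your argument is correct and follows the paper's proof in all essentials: pair $\nabla\Phi(x)-\nabla\Phi(y)$ against the hypothesis vector $w$ via Taylor's formula \eqref{taylor}, split $(x-y)\cdot H_y w$ along the spectral gap, calibrate $\alpha$ and $\delta$ so the positive third-derivative term dominates, and then convert the scalar lower bound to a lower bound on the nonisotropic dual norm via \eqref{csineq}. Two minor remarks. First, the paper carries out the final conversion entirely at the fixed radius $r=d$, using \eqref{csineq2} for both Hessian pieces, and obtains $N_y^*[\nabla\Phi(x)-\nabla\Phi(y),d]\gtrsim K'/(K\gamma)$; the passage to $N_y^*[\cdot]\gtrsim(K'/(K\gamma))^3 d$ then follows from the scaling bound \eqref{scaling2} (namely $N_y^*[v,d]\geq c$ forces $N_y^*[v]\geq c^3 d$), so your ``secondary technical point'' — that pairing only at $r=d$ would lose a power and fall short of the stated exponent — is not accurate; your optimization over $r$ tightens the exponent from $3$ to $2$, which is a genuine (if unneeded) improvement rather than a necessity. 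Second, the paper keeps everything in the $N_y^*[\cdot,d]$ norm and so sidesteps the extra $N_x^*\to N_y^*$ transfer you perform in the last paragraph (incurring the $8/3$ exponent); staying at $y$ throughout is slightly cleaner, though your route lands in the same place since $K'/(K\gamma)\leq 1$.
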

\begin{proof}
When $B(y,d)$ has a local spectral gap on $(\alpha,\beta]$, then as noted above
\[ 1 - \delta^2 \leq (N_y[E_y^- (x-y),d])^2 \leq  (1 + \alpha^\frac{1}{2})^2 (K d^{-1})^\frac{2}{3} |E_y^- (x-y)|^2. \]
Since $|x-y| \leq K^{-\frac{1}{3}} d^\frac{1}{3}$, it must therefore hold that
\[ |x-y| \leq \frac{1 + \alpha^\frac{1}{2}}{(1 - \delta^2)^\frac{1}{2}} |E_y^- (x-y)|. \]
The mean-value theorem can be applied to estimate $|w \cdot(\nabla \Phi(x) - \nabla \Phi(y)) - w \cdot (H_y (x-y))|$ in terms of the pointwise values of $(w \cdot \nabla)((x-y) \cdot \nabla)^2 \Phi(z)$.  Provided $\alpha, \delta \lless 1$, the term $|x-y|^2$ on the right-hand side of this comparison may be replaced by $|E_y^-(x-y)|^2$, giving
\[ |w \cdot(\nabla \Phi(x) - \nabla \Phi(y)) - w \cdot (H_y (x-y))| \gtrsim  K' |w| |E_y^- (x-y)|^2. \]
%\[N_y[E_y^- v,r] \leq (1 + \alpha^\frac{1}{2}) (K r^{-1})^\frac{1}{3} |E_y^- v| \leq (1+ \alpha^\frac{1}{2}) N_y[E_y^- v,r]. \]
By \eqref{csineq2} and the given spectral gap on $B(y,d)$, one may estimate the Hessian term in two pieces:
\[ |w \cdot (H_y E_y^-(x-y))| \leq \alpha |w| (K d^2)^\frac{1}{3} \lesssim K \alpha |w| |E_y^-(x-y)|^2 \]
and
\[ |w \cdot (H_y E_y^+(x-y))| \leq \delta d N_y[w,d]. \]
Thus, when $\alpha \lless K'/K$, the inequality \eqref{csineq2} gives that
\[ \frac{K' |w| (K^{-1} d)^\frac{2}{3}}{d N_y[w,d]} \lesssim N_y^*[\nabla \Phi(x) - \nabla \Phi(y),d] + \delta. \]
Fixing $\delta \lless K'/(K \gamma)$ gives the conclusion of the proposition.
\end{proof}

In the context of the proof at hand, the main consequence of the previous two propositions is as follows.  Fix any two points $x,z \in \Omega$ with $d(z,x) = r$; if the nondegeneracy condition \eqref{nondegen} holds and $B(z,r)$ or $B(x,r)$ has a sufficiently large spectral gap (depending on the dimension and on the constants in the nondegeneracy condition), then 
\[ N_x^*[\nabla \Phi(x)] + N_z^*[\nabla \Phi(z)] \gtrsim r. \]
With this fact in hand, one appeals to proposition \ref{decomp} inductively as follows:  Suppose that $B(y,d)$ for some $d$ sufficiently small and its closure is contained in $\Omega$.  Let $z$ be any point in the closure at which $N_x^*[\nabla \Phi(x)]$ attains the minimum.  By proposition \ref{decomp}, the subset of $x \in B(y,d)$ on  for which $N_x^*[\nabla \Phi(x)] + N_z^*[\nabla \Phi(z)]$ is not greater than a constant times $d(z,x)$ may be covered by boundedly many balls whose $\beta'$-rank is strictly greater than $\rank_{\beta'} B(y,d)$ (here $\beta' \ggreater \beta$ as specified by proposition \ref{decomp}).  By induction on rank, the following is true:  there exist a bounded number of points (depending on $n$, $K$, $K'$, and $M$ from the nondegeneracy condition) $z_1,\ldots,z_l$ and balls $B(y_l,d_l)$ such that $z_i$ is in the closure of $B(y_i,d_i)$, the balls $B(y_i,d_i)$ cover $B(y,d)$, and for all $x \in B(y_i,r_i)$, $N_x^*[\nabla \Phi(x)] \geq N_{z_i}^*[\nabla \Phi(z_i)]$ and $N_x^*[\nabla \Phi(x)] + N_{z_i}^*[\nabla \Phi(z_i)] \gtrsim d(z_i,x)$.   Consequently $N_x^*[\nabla \Phi(x)] \gtrsim d(z_i,x)$ as well.
Thus, it must be the case that there exists $C$ depending on the constants in \eqref{nondegen}, on $n$, and on $K$, and boundedly many points $z_i$ such that
\[ \int_{B(y,d)} \frac{dx}{1 + (\lambda N_x^*[\nabla \Phi(x)])^N } \leq C \sum_{i} \int_{B(y_i,d_i)} \frac{dx}{1 + (\lambda d(z_i,x))^N}. \]

In the usual dyadic decomposition of the range of $d(z_i,\cdot)$, it follows that the right-hand side is bounded above by
\begin{equation} C \sum_{i} \sum_{j=0}^{j_i} 2^{-Nj} |B(z_i,2^j \lambda^{-1})|. \label{geom}
\end{equation}
where the sum over $j$ is truncated at index the index $j_i$ such that $B(y_i,d_i) \subset B(z_i, 2^{j_i-1} \lambda^{-1})$.  Let $k$ be the dimension of $V_{M,y}$.  Since the ball $B(z_i,2^{j} \lambda^{-1})$ is an ellipsoid, its volume can be computed explicitly; in particular,
\[ |B(z_i,2^j \lambda^{-1})| \lesssim (M^{-1} 2^j \lambda^{-1})^\frac{n-k}{2} (K^{-1} 2^j \lambda^{-1})^\frac{k}{3}. \] %C (2^j \lambda^{-1})^{\frac{k}{2} + \frac{n-k}{3}}, \]
%where the constant $C$ depends on $K,K',M$, and $n$.
%(since the radius $2^j \lambda^{-1}$ is always sufficiently small that $(K^2 2^j \lambda^{-1})^\frac{1}{3} \leq c_n M$ for some constant $c_n$).  
Choosing $N$ larger than $\frac{n-k}{2} + \frac{k}{3}$ in \eqref{geom} makes the sum convergent, and gives precisely the estimate claimed by theorem \ref{oscthm}.

\section{Proof of theorem 2}
\label{genericsec}
The final topic to be addressed concerns the behavior of a generic function vanishing to third order at some point.  Let ${\mathfrak S}_n^3$ be the vector space of homogeneous cubic polynomials with real coefficients in $n$ variables.  A smooth phase $\Phi$ will be called generic when there exists $p \in {\mathfrak S}_n^3$ in some generic subset (in the standard meaning of generic) such that $\Phi - p$ vanishes to fourth order at the origin.

As in the work of Greenleaf, Pramanik, and Tang \cite{gpt2007}, an interesting simplification of an algebraic nature occurs when estimating \eqref{oscillate} in the presence of a large number of dimensions.  The simplification arises because the Hessian matrix $H_y$ of a generic cubic polynomial (or a homogeneous polynomial of any degree, for that matter) will always have high rank unless $y=0$.  This situation is ideal for making uniform estimates of \eqref{oscillate} because $\Phi(x)$ and $\Phi(x) + \xi \cdot x$ share the same Hessian matrices, and therefore give rise to the same nonisotropic family of balls.

Suppose $\locrank_\beta B(y,d) = k$ and $D$ is the product of the absolute value of those eigenvalues $\mu$ of $H_y$ satisfying $|\mu| > \beta (K^2 d)^\frac{1}{3}$.  As just noted in the previous section, the Lebesgue measure of the ball can be estimated by
\[ |B(y,d)| \approx \prod_{\mu} \frac{r}{(|\mu|r)^\frac{1}{2} + (K r^2)^\frac{1}{3}} \lesssim K^{-\frac{n-k}{3}} D^{- \frac{1}{2}} d^{\frac{k}{2} + \frac{n-k}{3}}.   \]
Likewise, when $N \geq k+1$ and $\beta \ggreater 1$, the inequality \eqref{hesssize1} coupled with Fubini's theorem (integrating first over those directions corresponding to eigenvectors of $H_y$ with ``large'' eigenvalue) gives that
\[ \int_{B(y,d)} \frac{dz}{1 + (\lambda N_z^*[\nabla \Phi(z)])^N} \lesssim C_N \lambda^{-\frac{k}{2}} D^{-\frac{1}{2}} K^{-\frac{n-k}{3}} d^\frac{n-k}{3}. \]

Suppose that $\Phi$ is a $C^\infty$ function whose Newton polyhedron contains only vertices of degree three, that is, there exists a homogeneous cubic polynomial $p$ such that $\Phi(x) - p(x)$ vanishes to order $4$ at the origin.  Suppose further that the rank of the Hessian of $p$ is at least $k$ at every point $x \neq 0$.  It follows that, for $x$ sufficiently small but nonzero, the rank of the Hessian of $\Phi$ will also be at least $k$.  Moreover, for fixed $\Phi$ and $\beta$, if $\delta$ is sufficiently small, then $\rank_{\beta} B(x,\delta r) \geq k$ when $r$ is the distance from $x$ to the origin (here the nonisotropic and isotropic distances are comparable).  Covering $B(0,2^{-j} \lambda^{-1}) \setminus B(0,2^{-j-1} \lambda^{-1})$ by a bounded number of balls with radius comparable to $\delta 2^{-j} \lambda^{-1}$ and summing over $j \geq 0$ as in the previous section gives that, for some constant $C_N$ (independent of the choice of some small vector $\xi \in \R^n$):
\begin{align*} 
\int_{B(0,d)} \frac{dz}{1 + (\lambda N_z^*[\nabla \Phi(z) + \xi])^n} & \\   \lesssim C_N |B(0, \lambda^{-1})| & + C_N \sum_{j=0}^\infty  \lambda^{-\frac{k}{2}} ( \lambda 2^{-j})^\frac{k}{6} K^{-\frac{n-k}{3}} (2^j \lambda^{-1})^\frac{n-k}{3} 
\end{align*}
(since $D$ will be larger than a constant times $(2^{-j} \lambda)^\frac{k}{3}$).  Provided $\frac{k}{6} > \frac{n-k}{3}$, the infinite sum will converge and be bounded above by a constant times $\lambda^{-\frac{n}{3}}$.  By \eqref{sublevel}, it would follow that \eqref{oscillate} satisfies a uniform estimate with decay $\lambda^{-\frac{n}{3}}$ as well.

Therefore, it is natural to ask the following question:  for a generic cubic polynomial $p$, how low can the rank of the Hessian fall at points away from the origin?  An analogous version of this question also arises in the work of Greenleaf, Pramanik, and Tang \cite{gpt2007}, in which they ask about the rank of a generic mixed Hessian.  In that work, they prove what they call a ``rank 1 condition,'' meaning that for a generic mixed Hessian, the rank never falls to zero (except at a trivial point corresponding to the origin).  In the context of the theorem at hand, however, a rank 1 condition is far too weak to obtain optimal estimates for the cubic integrals (even in the original paper, it provides optimal results only for polynomials of very high degree, corresponding to operators with bounded rates of decay in $\lambda$).

Thankfully, one can prove a significantly stronger version of the rank 1 condition.  In fact, the result of the following proposition is that the rank of the Hessian of a generic cubic polynomial $p$ at any point $x \neq 0$ is always greater than the integer part of $n - \sqrt{2n}$ (which is asymptotically far better than even the necessary $\frac{2n}{3}$).  This result is somewhat surprising because the codimension of ``bad'' cubics (for which uniform estimates fail) inside the set of all cubics is much higher than $1$ for large $n$.  In the quadratic case, the best possible uniform estimates hold locally if and only if the determinant of the Hessian is nonzero at some point.  In contrast, if one were to attempt to explicitly characterize the ``good'' set of cubics, it would necessarily need to be described as the set of cubics on which any one of a number of different ``hyperdeterminants'' is nonzero.
\begin{proposition}
There is a dense open subset $U_n \subset {\mathfrak S}^3_n$ such that the Hessian of any $p \in U_n$ evaluated at any $x \neq 0$ in $\R^n$ has rank greater than or equal to the integer part of $n - \sqrt{2n}$.  %Furthermore, given any homogeneous quadratic polynomial $q$ in $n$ variables, there exists a dense open set $U_q \subset {\mathfrak S}_n^3$ such that the Hessian of $p + q$ for $p \in U_q$ has rank greater than or equal to the integer part of $n - \sqrt{2n}$ at any nonzero $x \in \R^n$ which is sufficiently small.
\end{proposition}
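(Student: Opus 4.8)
The plan is a parameter count over an incidence variety.  For a homogeneous cubic $p \in \mathfrak{S}^3_n$ write $H_x(p)$ for its Hessian at $x$; since $p$ has degree three, $x \mapsto H_x(p)$ is linear into the space $\mathrm{Sym}(n)$ of symmetric $n\times n$ matrices, and it depends linearly on $p$ as well.  Put $r_0 := \lfloor n - \sqrt{2n}\rfloor$ and $m := n - r_0 + 1$, so that $m \ge \sqrt{2n}+1$ (one may assume $r_0 \ge 1$, since otherwise the assertion is vacuous).  It is enough to produce a proper closed semialgebraic subset of $\mathfrak{S}^3_n$ containing the ``bad set''
\[ B_m := \set{p \in \mathfrak{S}^3_n}{ \rank H_x(p) \le n - m \text{ for some } x \ne 0}; \]
then $U_n := \mathfrak{S}^3_n \setminus B_m$ is open and dense, and each $p \in U_n$ has $\rank H_x(p) \ge n - m + 1 = r_0$ for all $x \ne 0$.

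First I would introduce the incidence variety
\[ Z_m := \set{(p,[x],W) \in \mathfrak{S}^3_n \times \mathbb{P}^{n-1} \times \mathrm{Gr}(m,n)}{W \subseteq \ker H_x(p)} \]
(which is well defined, since $\ker H_{\lambda x}(p) = \ker H_x(p)$ for $\lambda \ne 0$) and study the projection $\pi_2$ onto $\mathbb{P}^{n-1}\times\mathrm{Gr}(m,n)$.  Fixing $x \ne 0$ and $W$, and choosing coordinates with $W = \vspan\{e_1,\dots,e_m\}$, the condition $W \subseteq \ker H_x(p)$ is precisely the vanishing of every entry of $H_x(p)$ outside the lower-right $(n-m)\times(n-m)$ block, i.e.\ $mn - \binom m2$ linear conditions on $p$.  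These conditions are independent: for each fixed $x \ne 0$ the linear map $p \mapsto H_x(p)$ is onto $\mathrm{Sym}(n)$ — after rotating $x$ to $e_1$, the Hessians at $e_1$ of $y_1^3$, $y_1^2 y_j$, $y_1 y_j^2$ and $y_1 y_j y_k$ already span $\mathrm{Sym}(n)$ — so any coordinate projection of it is onto as well.  Hence every fiber of $\pi_2$ is a linear subspace of $\mathfrak{S}^3_n$ of the same dimension $\dim\mathfrak{S}^3_n - mn + \binom m2$, these fibers vary algebraically with constant rank, and therefore
\[ \dim Z_m = (n-1) + m(n-m) + \dim\mathfrak{S}^3_n - mn + \binom m2 = \dim\mathfrak{S}^3_n - \frac{m(m+1)}{2} + (n-1). \]

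Finally I would push this through the other projection $\pi_1 : Z_m \to \mathfrak{S}^3_n$, whose image is exactly $B_m$.  Since $m \ge \sqrt{2n}+1$, one has $m(m+1) \ge (\sqrt{2n}+1)(\sqrt{2n}+2) = 2n + 3\sqrt{2n} + 2 > 2(n-1)$, so $\dim Z_m < \dim\mathfrak{S}^3_n$; because $\mathbb{P}^{n-1}\times\mathrm{Gr}(m,n)$ is compact, $\pi_1$ is a closed map, so $B_m$ is closed, and as the image of $Z_m$ under an algebraic map it is semialgebraic of dimension at most $\dim Z_m < \dim\mathfrak{S}^3_n$, hence nowhere dense.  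Thus $U_n := \mathfrak{S}^3_n\setminus B_m$ is the desired dense open set.  The hard part is really two bookkeeping points on which everything turns: (i) checking that the surjectivity/constant-rank statement holds for \emph{every} $x\ne 0$, independently of the position of $x$ relative to $W$, so that the fiber-dimension count is an equality and not merely a lower bound (which would point the inequality the wrong way); and (ii) combining the compactness argument (for closedness of $B_m$) with the dimension bound (for empty interior) so as to obtain a genuine dense \emph{open} subset of the real vector space $\mathfrak{S}^3_n$, rather than only a set of full measure or a complex-generic statement.
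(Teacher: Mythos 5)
Your proposal is correct and in its essence matches the paper's argument: both reduce the claim to a dimension count on an incidence variety fibered over the cubics, and both rest on the same key linear-algebra fact, namely that for every fixed $x\ne 0$ the map $p\mapsto H_x(p)$ is onto the space of symmetric matrices (verified by exhibiting the cubics $y_1^3,\ y_1^2y_j,\ y_1y_jy_k$ after rotating $x$ to $e_1$, exactly as the paper does). The mechanics differ in two respects worth noting. First, the paper works with the stratum of \emph{exact} rank $r$ inside $\R^n\times\mathfrak{S}^3_n$ and invokes the implicit function theorem to bound its codimension by the dimension of the span of $P_L(\partial_{p_{ij}}T_{x,p})P_R$; you instead resolve the low-rank locus by adding a Grassmannian factor $\mathrm{Gr}(m,n)$, which replaces the transversality argument by an explicit count of $mn-\binom m2$ linear conditions of constant rank. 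This is a genuine simplification: the constancy of the fiber dimension makes the dimension of $Z_m$ an identity rather than a pointwise estimate that then has to be glued. Second, you use $\mathbb{P}^{n-1}$ in place of $\R^n\setminus\{0\}$, exploiting the homogeneity $\ker H_{\lambda x}(p)=\ker H_x(p)$ to save one dimension; this gives a codimension bound of $\frac{m(m+1)}{2}-(n-1)$ instead of the paper's $\frac{m(m+1)}{2}-n$, which doesn't change the $\lfloor n-\sqrt{2n}\rfloor$ threshold but is slightly sharper. Finally, you explicitly derive closedness of the bad set from compactness of $\mathbb{P}^{n-1}\times\mathrm{Gr}(m,n)$ and then obtain openness of $U_n$, a point the paper leaves implicit. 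Both routes lead to the same conclusion via the same numerology; yours is the tidier incarnation of the dimension count, while the paper's is phrased to match the more differential-geometric style of the rest of the article.
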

\begin{proof}
Suppose momentarily that $T_x$ is some family of linear transformations from $\R^m$ to $\R^n$ for $m \geq n$ which depends smoothly on $x = (x_1,\ldots,x_k) \in \R^k$.  Using the standard Euclidean structures on $\R^n$ and $\R^m$, there exists an orthogonal projection $P_R$ on $\R^m$ projecting onto the kernel of $T_x$, and $P_L$ on $\R^n$ projecting onto the kernel of $T^t$.  Now any $T_y$ for $y$ sufficiently near $x$ must have rank at least $r$.  For the rank of $T_y$ to equal $r$, it is necessary and sufficient that
\begin{equation}
 P_L T_y P_R - P_L T_y (I - P_R) T_y^{-1} (I - P_L) T_y P_R = 0, \label{reduce}
\end{equation}
which is proved by the standard row-reduction techniques (note that, while $T_y^{-1}$ is not defined, the product $A := (I - P_R) T_y^{-1} (I - P_L)$ is defined so that $P_R A = A P_L = 0$ and $(I - P_L) T_y (I - P_R) A = (I- P_L)$, etc.)
It follows by the implicit function theorem, then, that the codimension of the set of rank $r$ transformations near $x$ is at least equal to the dimension of the span of the space of operators $P_L ( \partial_{x_j} T_x) P_R$ for $j=1,\ldots,k$.

To apply this observation to the current proposition, an appropriate family of operators must be constructed:  given any nonzero $x \in \R^n$ and any nonzero homogeneous cubic polynomial $p$ in $n$-variables, let $T_{x,p}$ equal the Hessian matrix of $p$ evaluated at $x$.  Suppose $x = (1,0,\ldots,0)$; let $p_{ij}(y) := y_i y_j y_1$.  The Hessian of $p_{ij}$ evaluated at $x$ is a symmetric matrix whose only nonzero entries are the $i,j$ and $j,i$ entries.  Thus the span of all such Hessians is the entire space of symmetric matrices.  Differentiating $T_{x,p}$ in the direction of $p_{ij}$ then, for all $i \leq j$, the resulting matrices must again span.  Finally, an appropriate orthogonal transformation can map any nonzero $x$ to $(1,0,\ldots,0)$, so at any nonzero $x$ and nonzero $p$, the derivatives of $T_{x,p}$ span all symmetric matrices.  If $T_{x,p}$ is rank $r$, then, $P_L = P_R$ (since $T$ is symmetric), and the span of $P_L ( \partial_{ij} T_{x,p}) P_R$ (where $\partial_{ij}$ is differentiation in the direction of $p_{ij}$)  must therefore have dimension $\frac{1}{2} (n-r)(n-r+1)$.  Thus, if one projects the incidence relation
\[ \Lambda_r := \set{(x , p) \in \R^n \setminus \{0\} \times {\mathfrak S}_n^3 \setminus \{0 \} }{\mbox{rank } T_{x,p} \leq r} \]
back onto the space of homogeneous cubics ($T_{x,p} \mapsto p$), the projection of $\Lambda_r$ has codimension at least $\frac{1}{2} (n-r)(n-r+1) - n$ (which is nontrivial provided $r \leq n - \sqrt{2n}$).  The projection of $\Lambda_r$, however, is precisely the set of those polynomials $p$ for which the rank of the Hessian of $p$ is less than or equal to $r$ at some nonzero point.
%
%The proof of the general case $p+q$ for $p$ cubic and $q$ quadratic follows in the same manner: simply take $T_{x,p}$ to be the Hessian of $q + p$ and apply \eqref{reduce} locally at each pair $x,p$ with the rank of $T_{x,p}$ equal to $r$.
\end{proof}
By virtue of this proposition, the argument presented at the opening of this section will hold generically when $\sqrt{2n} \leq \frac{n}{3}$, giving the condition $n \geq 18$ found in theorem \ref{oscthm2}.  It is also worth noting that the same arguments yield results when $\Phi$ has a nonzero Hessian at the origin, if it is assumed that the kernel of the Hessian has dimension at least a constant times $\sqrt{n}$ (just as in the argument presented, if the kernel is nonempty, then its dimension cannot be too low).

\section{Acknowledgements}

The author would like to thank E. M. Stein, A. Greenleaf, and D. H. Phong for helpful comments on earlier versions of this paper. 

%
%$x \in S^{n-1}$ and $p$ any homogeneous polynomial with real coefficients whose squares sum to $1$ as well. $(T_{p,x})_{ij} :=  \partial_{x_i} \partial_{x_j} p (x)$.  If $v_1,\ldots,v_n$ is an orthonormal basis of $\R^n$ with $v_1 = x$, and $q_{ij}(y) = (v_i \cdot y) (v_j \cdot y) (v_1 \cdot y)^{k-2}$, it is clear that the operators $T_{q_{ij},x}$, $i \leq j$, span the whole space of symmetric operators.  In particular, this means that the codimension of rank $r$ matrices is $\frac{(n-r)(n-r+1)}{2}-1$.
%
%Now $(x,z) \in S^{n+m-1}$ and $S(x,z)$ any bihomogeneous polynomial.  If $x \neq 0$, take the $v_j$'s as before.  $q_{ij} = (v_i \cdot x) z_j (v_1 \cdot x)^l$ and so on.  This means that the codimension is $(m-r)(n-r)-1$.
%
%In the first example, the dimension is $(n-1) + \binom{n+k-1}{k} -1 - \frac{(n-r)(n-r+1)}{2}+1$, which is less than $\binom{n+k-1}{k} -1$ whenever $r \leq n - (2n)^\frac{1}{2}$ for example.  In the latter case, need $n+m-1 - (m-r)(n-r) < 0$, which occurs when $r \leq n - \frac{m+n}{m-n}$ when $m >> n$ and $r \leq n - \sqrt{m+n}$ when $m \approx n$.
%
%{\bf $k$ already used} For a $k$-fold linear combination of these things, you need $n+k-1 - \frac{(n-r)(n-r+1)}{2} < 0$ and $m + n +k - 2 - (m-r)(n-r) < 0$.

\bibliography{mybib}
\end{document}